\documentclass[reqno]{amsart}
\usepackage{amsmath,amssymb,amsxtra,anysize,xcolor,tikz,hyperref,comment,tikz-cd}
\usepackage{graphicx}
\usepackage{enumitem,subcaption}
\usepackage{comment, rotating}

\setcounter{MaxMatrixCols}{20}

\newtheorem{theorem}{Theorem}[section]

\newtheorem{corollary}[theorem]{Corollary} 
\newtheorem{lemma}[theorem]{Lemma} 
 
\newtheorem{example}[theorem]{Example}

\newtheorem{remark}[theorem]{Remark}  
\newtheorem{definition}[theorem]{Definition}

\newcommand{\C}{\mathbb{C}}
\newcommand{\N}{\mathbb{N}}
 
\newcommand{\Id}{\mathrm{Id}}
\newcommand{\Mat}{\mathrm{Mat}}
\newcommand{\Gr}{\mathrm{Gr}}

\newcommand{\fan}{\mathrm{U_{fan}}}

\newcommand{\Br}{\mathrm{Br}}

\usetikzlibrary{decorations.markings}

\title{On the Cohomology of Two Stranded Braid Varieties}

\author{Tonie Scroggin}
\thanks{I wish to highly thank Eugene Gorsky,
Roger Casals, Jose Simental-Rodriguez,
Lauren Williams, Misha Mazin, Pavel Galashin and
Catharina Stroppel for
helpful conversations related
to this paper. I also would like to express my sincerest gratitude to the anonymous referee of my paper, their comments and suggested edits have been highly welcome.
In no particular order,
I wish to thank Ian Sullivan,
Trevor Oliveira-Smith, Shanon
Rubin, Matthew Corbelli,
Sari Ogami, Alex Black,
Milo Bechthoff 
Weising, Alexander Simons,
Greg DePaul, Brian Harvey, Laura Starkston, Becca Thomases, Tina Denena and most importantly my family. I would love to thank everyone that has greatly helped me, however I believe I would be required to write an entirely new paper simply for this purpose. This work was partially supported by NSF grant DMS-2302305.}

\begin{document}

\maketitle
\begin{abstract}
    We compute the cohomologies of two strand braid varieties using the two-form present in cluster structures. We confirm these results with proof using Alexander and Poincar\'e duality. Further, we consider products of braid varieties and their interactions with the cohomologies.  
\end{abstract}

\tableofcontents

\section{Introduction}

In this paper, we will study the relationship between braid varieties and their associated cluster structure in order to compute their cohomologies. Braid varieties are a class of affine algebraic varieties associated to positive braids \cite{CGGS1,CGGS2,CGGLSS}. Braid varieties are closely related to augmentation varieties of Legendrian links \cite{CW} and also include interesting geometric spaces such as positroid varieties, open Richardson varieties and double Bruhat cells. 

    To define the braid variety, we use the  braid group on $n$ strands, 
      $$
      \Br_n=\langle \sigma_1,\ldots,\sigma_{n-1}\;\textbf{:}\;\sigma_i\sigma_{i+1}\sigma_i=\sigma_{i+1}\sigma_i\sigma_{i+1},\,\sigma_i\sigma_j=\sigma_j\sigma_i\text{ if }|i-j|>1\rangle
      $$
      and restrict to positive crossings $\sigma_i$ between the $i$ and $i+1$ strand. At each crossing $\sigma_i$ of the positive braid we assign a complex variable $z$, see Figure \ref{fig:braid variables}, and a matrix 
$$B_i(z):=\begin{pmatrix}1&\cdots&&&\ldots&0\\ \vdots&\ddots&&&&\vdots\\ 0&\cdots&z&-1&\cdots&0\\ 0&\cdots&1&0&\cdots&0\\ \vdots&&&&\ddots&\vdots\\ 0&\cdots&&&\cdots&1\end{pmatrix}
$$
where the $2\times2$ embedded matrix is at the $i$ and $i+1$ row and column. Let $\beta=\sigma_{i_1}\dots\sigma_{i_k}\in\Br_n^+$ be a positive braid word, then the braid variety $X(\beta)$ is defined by 
$$X(\beta):=\left\{(z_1,\ldots,z_k) :\:
        \begin{pmatrix}
        0 & \dots & 1\\
        \vdots &\reflectbox{$\ddots$} &\vdots\\
        1 &\dots & 0\\
        \end{pmatrix}
        B_{i_1}(z_1)\cdots B_{i_k}(z_k)\ \text{is upper-triangular}.\right\}
        $$
There is an alternative geometric definition of braid varieties using certain configurations of flags, however, it will not be relevant to this paper, see \cite{CGGLSS,GLSBS} for further information. As indicated by the title, this paper will solely focus on two-strand braid varieties and we denote such a braid with $k$ crossings as $\sigma^k$.
    
\begin{theorem}[Hughes\cite{H}, Chantraine-Ng-Sivek\cite{CNS}] The braid variety $X(\sigma^k)$ is defined in $\C^k$ by the equation $F_k(z_1,\ldots,z_k)=0$ where $F_k$ is given by the recursion 
    $$B_\beta(z_1,\ldots,z_k)=\begin{pmatrix}
        F_k(z_1,\dots,z_k)&-F_{k-1}(z_1,\dots,z_{k-1})\\F_{k-1}(z_2,\dots,z_k)&-F_{k-2}(z_2,\dots,z_{k-1})
    \end{pmatrix}$$
    where 
    \begin{equation}
    \label{eq: def F}
    F_k(z_i,\dots,z_{i+k})=z_kF_{k-1}(z_i,\dots,z_{i+k-1})-F_{k-2}(z_i,\dots,z_{i+k-2})
    \end{equation}
    with initial values $F_1(z_i)=z_i$, $F_0\equiv1$ and $F_{-1}\equiv0$.
Moreover, if $F_k(z_1,\dots,z_k)=0$, then $F_{k-1}(z_1,\dots,z_{k-1})\neq0$ and $$z_k=\dfrac{F_{k-2}(z_1,\dots,z_{k-2})}{F_{k-1}(z_1,\dots,z_{k-1})}.$$
\begin{remark}
    As a corollary, we have $X(\sigma^k)\cong \{(z_1,\dots,z_{k-1})\in\C^{k-1}:F_{k-1}(z_1,\dots,z_{k-1})\neq0\}$ as algebraic varieties. In particular, $X(\beta)$ is smooth of complex dimension $k-1$.
\end{remark}
\end{theorem}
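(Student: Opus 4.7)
The plan is to establish the matrix formula by induction, read off the defining equation from it, verify the non-vanishing of $F_{k-1}$ on the variety, and finally exhibit an explicit isomorphism to deduce smoothness.

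First I would prove the claimed form
$$B_\beta(z_1,\ldots,z_k)=\begin{pmatrix}F_k(z_1,\ldots,z_k)&-F_{k-1}(z_1,\ldots,z_{k-1})\\ F_{k-1}(z_2,\ldots,z_k)&-F_{k-2}(z_2,\ldots,z_{k-1})\end{pmatrix}$$
by induction on $k$. The base cases $k=1,2$ are direct from $B_1(z)=\left(\begin{smallmatrix}z&-1\\1&0\end{smallmatrix}\right)$ together with $F_{-1}=0$, $F_0=1$, $F_1(z_i)=z_i$. For the inductive step I right-multiply the previous product by $B_1(z_k)$; the four new entries collapse, via the recursion \eqref{eq: def F}, to the entries on the right-hand side. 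The only subtlety is bookkeeping, namely that the recursion is being applied with starting index $i=1$ in the top row and $i=2$ in the bottom row, as stated.

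With this in hand, the anti-diagonal matrix simply swaps the two rows of $B_\beta$, so upper-triangularity is equivalent to the vanishing of the new bottom-left entry, which is exactly $F_k(z_1,\ldots,z_k)$. This identifies $X(\sigma^k)=\{F_k=0\}\subset\C^k$. For the non-vanishing claim, suppose both $F_k$ and $F_{k-1}(z_1,\ldots,z_{k-1})$ vanish at a common point. The recursion then forces $F_{k-2}(z_1,\ldots,z_{k-2})=0$, and iterating the identity $F_j=z_jF_{j-1}-F_{j-2}$ downward propagates the vanishing all the way to $F_0$, contradicting $F_0=1$. Having $F_{k-1}\neq 0$, the recursion solves for $z_k$ as claimed.

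For the remark, I would use the projection $\pi\colon X(\sigma^k)\to\C^{k-1}$ that forgets $z_k$. Its image lies in the open subset $\{F_{k-1}(z_1,\ldots,z_{k-1})\neq 0\}$, and by the previous paragraph it has a regular inverse $(z_1,\ldots,z_{k-1})\mapsto\bigl(z_1,\ldots,z_{k-1},\tfrac{F_{k-2}(z_1,\ldots,z_{k-2})}{F_{k-1}(z_1,\ldots,z_{k-1})}\bigr)$ on that locus. Hence $\pi$ is a biregular isomorphism onto an open subvariety of $\C^{k-1}$, from which smoothness and the dimension $k-1$ are immediate.

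There is no conceptually deep obstacle here; the main thing to be careful about is the inductive bookkeeping in the first step, where the matrix identity must be checked with the correct index shift between the two rows so that the recursion matches all four entries simultaneously.
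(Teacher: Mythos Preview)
Your proposal is correct and follows essentially the same route as the paper: induction for the matrix formula, reading off $\{F_k=0\}$ from the $(2,1)$ entry after the row swap, the downward-propagation argument for $F_{k-1}\neq 0$, and then the explicit projection/inverse to identify $X(\sigma^k)$ with the open set $\{F_{k-1}\neq 0\}\subset\C^{k-1}$. One minor remark: the antidiagonal matrix swaps the rows \emph{and} negates one of them, but since you only care about which entry must vanish this does not affect your conclusion.
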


We construct an explicit isomorphism between the two-strand braid variety and positroid varieties in the Grassmannian $\Gr(2,k+1)$. 
By Scott \cite{S}, these admit a cluster structure of type $A$. 

We define the open positroid variety as the set of elements in the Grassmannian such that there is a representative $k\times n$-matrix such that all cyclically consecutive $k\times k$ minors don't vanish, i.e., $$\Delta_{i,\dots,i+k-1}=\det(v_i,\dots,v_{i+k-1})\ne0.$$ This condition does not depend on the representative, so the positroid is well-defined. 

\begin{theorem}
\label{thm: positroid}
Let $\Pi_{2,k+1}^{\circ}$ be the open positroid variety defined by the condition that all consecutive $2\times 2$ minors do not vanish, i.e., $\Delta_{i,i+1}=\det(v_i,v_{i+1})\ne 0$, and $\Pi_{2,k+1}^{\circ,1}$ be the subset of the open positroid variety such that for all $\Delta_{i,i+1}=1$ for all $1\le i\le k$. Then
\begin{enumerate}[label=\alph*),font=\itshape]
    \item $\Pi_{2,k+1}^{\circ,1}$ \textit{is isomorphic to }$X(\sigma^k)$.
    \item $\Pi_{2,k+1}^{\circ}$ \textit{is isomorphic to }$X(\sigma^k)\times (\C^*)^k$.
\end{enumerate}
\end{theorem}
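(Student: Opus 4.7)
The plan is to construct explicit isomorphisms, with (b) reducing to (a) through a torus-of-minors action.

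For (a), given $M = [v_1 \mid \cdots \mid v_{k+1}] \in \Pi_{2,k+1}^{\circ,1}$, I would first use the $GL_2$-action on the $2\times(k+1)$ matrix to normalize $v_1 = e_1$ and $v_2 = e_2$, which is possible because $\Delta_{12} = 1 \neq 0$. The remaining constraints $\Delta_{i,i+1} = 1$ combined with the linear dependence of any three consecutive columns of a rank-$2$ matrix then force the three-term recursion
\[
v_{i+1} = p_{i}\, v_i - v_{i-1}, \qquad i = 2,\ldots,k,
\]
for uniquely determined scalars $p_2,\ldots,p_k \in \C$, giving $k-1$ affine coordinates on $\Pi_{2,k+1}^{\circ,1}$. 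An induction on $j-i$ using this recursion together with the initial values $\Delta_{i,i} = 0$, $\Delta_{i,i+1} = 1$ would yield the dictionary $\Delta_{i,j}(M) = F_{j-i-1}(p_{i+1},\ldots,p_{j-1})$, matching every Pl\"ucker minor of $M$ with a polynomial of the braid recursion; in particular the cyclic minor $\Delta_{1,k+1}(M) = F_{k-1}(p_2,\ldots,p_k)$ is exactly the quantity required to be nonzero on the open positroid.

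With this dictionary I would define $\phi \colon \Pi_{2,k+1}^{\circ,1} \to X(\sigma^k)$ by $z_i := p_i$ for $i = 2,\ldots,k$, together with $z_1 := F_{k-2}(p_3,\ldots,p_k)/F_{k-1}(p_2,\ldots,p_k)$, which is well-defined by the cyclic-minor nonvanishing. That $F_k(\phi(M)) = 0$ then follows immediately from the reversal symmetry $F_k(z_1,\ldots,z_k) = F_k(z_k,\ldots,z_1)$, which yields the ``left recursion''
\[
F_k(z_1,\ldots,z_k) = z_1 F_{k-1}(z_2,\ldots,z_k) - F_{k-2}(z_3,\ldots,z_k).
\]
The inverse map forgets $z_1$ and reads off the recurrence coefficients $p_i := z_i$ for $i \geq 2$; consistency of the reconstructed $z_1$ uses precisely $F_k = 0$. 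For (b), I would then extend $\phi$ via the column-rescaling torus $v_i \mapsto \lambda_i v_i$, which transforms $\Delta_{i,i+1} \mapsto \lambda_i \lambda_{i+1}\Delta_{i,i+1}$ and descends to an action on $\Pi_{2,k+1}^\circ$. Solving the triangular system $\lambda_i \lambda_{i+1} = 1/\Delta_{i,i+1}(M)$ recursively shows every $M \in \Pi_{2,k+1}^\circ$ admits a unique torus translate in $\Pi_{2,k+1}^{\circ,1}$, and the residual torus produces the claimed $(\C^*)^k$ factor; composing with (a) gives the stated product decomposition.

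The main obstacle will be the inductive identification $\Delta_{i,j}(M) = F_{j-i-1}(p_{i+1},\ldots,p_{j-1})$, which underpins the whole argument and requires careful bookkeeping of indices and signs between the positroid and braid recursions. A secondary subtlety in (b) is pinning down the residual stabilizer of the column-rescaling torus cleanly, so as to identify the correct rank of the torus factor and certify that the fibration over $\Pi_{2,k+1}^{\circ,1}$ is (algebraically) trivial rather than just a torus-principal bundle.
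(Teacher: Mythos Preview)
Your proposal is correct and follows essentially the same route as the paper. For part (a), both arguments rest on the three-term recursion $v_{i+1}=z_iv_i-v_{i-1}$ forced by $\Delta_{i,i+1}=1$; the paper normalizes only $v_1=(1,0)$, reads off all of $z_1,\ldots,z_k$, and then applies a second unipotent row operation to force $F_k=0$, whereas you normalize $v_1=e_1,\,v_2=e_2$ up front and recover $z_1$ from the left recursion via the reversal symmetry --- these are the same maneuver performed in a different order. One small tightening for part (b): if you rescale all $k+1$ columns, the system $\lambda_i\lambda_{i+1}=1/\Delta_{i,i+1}$ is underdetermined and the translate into $\Pi^{\circ,1}_{2,k+1}$ is not literally unique; the paper sidesteps this by fixing column $1$ and rescaling only columns $2,\ldots,k+1$, which makes the system square and the $(\C^*)^k$ factor explicit without any residual-stabilizer bookkeeping.
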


One of the main motivations for studying the homologies of braid varieties is their relation to the Khovanov-Rozansky homology of the corresponding link. 
    
    \begin{theorem}[Trinh\cite{T}]
    \label{thm: trinh intro}
    For all $r$-strand braids $\beta\in Br_W^+$ we have $$\mathrm{HHH}^{r,r+j,k}(\beta\Delta)^{\vee}\simeq \mathrm{gr}^w_{j+2(r-N)}\mathrm{H}^{!,G}_{-(j+k+2(r-N))}(X(\beta)).
    $$
Equivalently, by Gorsky-Hogancamp-Mellit-Nakagane \cite{GHMN}, $\mathrm{H}^*(X(\beta))\simeq \mathrm{HHH}^{0,*,*}(\beta\Delta^{-1})^{\vee}$ where $\Delta$ is the half-twist (aka longest word). 
Here $\mathrm{gr}^w$ denotes the associated graded with respect to the weight filtration in cohomology.
\end{theorem}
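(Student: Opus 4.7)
The plan is to compare two computations of the triply-graded invariant: an algebraic one, via Rouquier complexes of Soergel bimodules computing $\mathrm{HHH}$, and a geometric one, via the equivariant Borel--Moore cohomology of $X(\beta)$ together with its weight filtration.

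\emph{Geometric model.} First, I would realize $X(\beta)$ as a moduli space of chains of complete flags $F_0,F_1,\ldots,F_k$ in $\C^r$ whose consecutive pairs are in the relative position prescribed by the letters of $\beta$, with $F_0$ the standard flag and $F_k$ in the position given by the antidiagonal permutation matrix appearing in the definition of $X(\beta)$. The antidiagonal permutation \emph{is} the lift of the longest element $w_0$, and its lift to the braid group is the half-twist $\Delta$; this is the structural reason the braid side carries the extra factor of $\Delta$ (or $\Delta^{-1}$ after inversion). In this picture $X(\beta)$ is an open substack of an ambient Bott--Samelson type variety, and the natural torus action makes the whole setup equivariant.

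\emph{Algebraic side.} Next I would invoke the standard presentation of $\mathrm{HHH}$ as the derived global sections of a sheaf built by convolving the Soergel correspondences associated to the letters of $\beta$ and then taking Hochschild cohomology to implement the braid closure. Under this dictionary the Hochschild degree corresponds to an equivariant cohomological degree, the quantum grading to a weight, and the homological grading to ordinary cohomological degree. The shifts by $2(r-N)$ arise as the complex codimension of the open locus $X(\beta)$ inside the ambient Bott--Samelson resolution, reflecting the expected dimension count from the relative positions dictated by $\beta\Delta$.

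\emph{Comparison and main obstacle.} Restriction to the open stratum produces a natural comparison map from $\mathrm{HHH}(\beta\Delta)^{\vee}$ to $\mathrm{H}^{!,G}(X(\beta))$, and the theorem amounts to the claim that this map induces an isomorphism on the associated graded of the weight filtration with the indicated degree shifts. The hard step, which I expect to be the main obstacle, is a purity (or formality) statement ensuring that the boundary strata contribute only in strictly lower weights, so that the Leray-type spectral sequence degenerates and the weight-graded pieces on the two sides agree cleanly. In practice this is the point where one typically appeals either to point-counting over finite fields (to invoke Deligne-style purity), to mixed Hodge module techniques, or to formality of the underlying dg-category of Soergel bimodules --- not to the geometric presentation of $X(\beta)$ itself nor to the Soergel-bimodule computation of $\mathrm{HHH}$, each of which is essentially routine by comparison.
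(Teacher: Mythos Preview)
This theorem is not proved in the paper at all: it is stated in the introduction as a result of Trinh, cited as \cite{T}, and used only as motivation for studying $H^*(X(\beta))$. There is no ``paper's own proof'' to compare your proposal against. The paper's contribution begins afterwards, with the elementary computation of $H^*(X(\sigma^k))$ via Alexander/Poincar\'e duality and the explicit forms $\alpha,\omega$; it never revisits the link to $\mathrm{HHH}$.

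As for your sketch on its own merits: the broad architecture---realize $X(\beta)$ via flag configurations, identify the Soergel/Rouquier side, and compare through a purity or formality argument---is indeed the shape of Trinh's approach, but what you have written is an outline of expectations rather than a proof. The sentence ``Restriction to the open stratum produces a natural comparison map \ldots'' hides the entire construction: one has to say precisely which functor on the Hecke category is being compared to which pushforward, and why the gradings match as claimed (your bookkeeping with $r$, $N$, and the shift $2(r-N)$ is asserted, not derived). The ``main obstacle'' paragraph is honest about where the difficulty lies, but invoking ``Deligne-style purity, mixed Hodge modules, or formality'' as interchangeable options is not a proof step; Trinh's argument makes a specific choice here and carries it through. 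If you want to turn this into an actual argument you would need to commit to one of these frameworks and verify the degeneration explicitly.
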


    On two strands this equivalence simplifies to $$H^*(X(\sigma^k))\simeq \mathrm{HHH}^{0,*,*}(\sigma^{k-1})$$
    where the braid $\sigma^{k-1}$ closes up to the torus link $T(2,k-1)$.    
    
    The cohomology of $X(\sigma^k)$ was computed by Lam and Speyer in \cite{LS} using cluster algebra machinery. Here we give a simpler and more direct proof.

    First, we describe the cohomology of $X(\sigma^k)$  as a vector space.

\begin{theorem}Let $\beta=\sigma^n$, then the cohomology of the two-strand braid variety is given by: $$H^i(X(\beta);\C)=\begin{cases}
        \C &\text{for }0\leq i\leq n-1\\ 0 &\text{otherwise}.
    \end{cases}$$
\end{theorem}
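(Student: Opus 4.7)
The plan is to prove the statement by induction on $n$. The base case $n = 1$ is immediate: the defining equation $F_1(z_1) = z_1 = 0$ cuts out the single point $\{0\} \subset \C$, whose cohomology is $\C$ in degree $0$.

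For the inductive step, I will use the preceding remark, which realizes $X(\sigma^n) \cong \{F_{n-1} \ne 0\} \subset \C^{n-1}$. Since $\{F_{n-1} = 0\} \subset \C^{n-1}$ is by definition the smooth affine hypersurface $X(\sigma^{n-1})$, this exhibits $X(\sigma^n)$ as the complement
$$U := \C^{n-1} \setminus X(\sigma^{n-1}).$$
Feeding this decomposition into the long exact sequence in compactly supported cohomology for the closed-open pair,
$$\cdots \to H^i_c(U) \to H^i_c(\C^{n-1}) \to H^i_c(X(\sigma^{n-1})) \to H^{i+1}_c(U) \to \cdots,$$
the middle term $H^\ast_c(\C^{n-1})$ is $\C$ in degree $2n-2$ and zero otherwise. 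Poincar\'e duality applied to the smooth variety $X(\sigma^{n-1})$ of complex dimension $n-2$ converts the inductive hypothesis into $H^i_c(X(\sigma^{n-1})) \cong H^{2n-4-i}(X(\sigma^{n-1})) = \C$ for $n-2 \le i \le 2n-4$ and zero otherwise. Reading the long exact sequence with these inputs isolates $H^j_c(U) = \C$ for $n-1 \le j \le 2n-2$ and zero elsewhere; a second application of Poincar\'e duality, now on $U = X(\sigma^n)$ (smooth affine of complex dimension $n-1$), translates this into $H^i(X(\sigma^n)) = H^{2n-2-i}_c(X(\sigma^n)) = \C$ for $0 \le i \le n-1$ and zero otherwise, as desired.

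I expect the main technical point to be the bookkeeping near the top of the long exact sequence, namely degrees $i \in \{2n-3,2n-2\}$, where $H^\ast_c(\C^{n-1})$ and $H^\ast_c(X(\sigma^{n-1}))$ can both contribute. For all other indices, each $H^j_c(U)$ is sandwiched between zeros and the isomorphism with a shifted piece of $H^\ast_c(X(\sigma^{n-1}))$ is automatic. At the top one must verify that $H^{2n-3}_c(U) \cong H^{2n-4}_c(X(\sigma^{n-1})) = \C$ and that the map $H^{2n-2}_c(U) \to H^{2n-2}_c(\C^{n-1}) = \C$ is an isomorphism, both of which reduce to the dimensional vanishing $H^{2n-3}_c(X(\sigma^{n-1})) = H^{2n-2}_c(X(\sigma^{n-1})) = 0$. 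Finally, $H^{2n-1}_c(U) = 0$ follows once this surjectivity is in hand. Beyond this short diagram chase, the rest of the argument is routine.
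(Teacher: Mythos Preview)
Your proof is correct and follows essentially the same inductive strategy as the paper: realize $X(\sigma^n)$ as the complement of $X(\sigma^{n-1})$ inside $\C^{n-1}$, then pass cohomological information across the complement using duality. The only presentational difference is that the paper invokes Alexander duality as a single black-box step, $\widetilde{H}_i(\C^{n-1}\setminus X(\sigma^{n-1}))\cong \widetilde{H}^{2(n-1)-1-i}_c(X(\sigma^{n-1}))$, followed by one Poincar\'e duality, whereas you unpack this into the long exact sequence of the closed--open pair in $H^*_c$ together with two applications of Poincar\'e duality. Since Alexander duality in $\mathbb{R}^N$ is precisely what one obtains from that LES plus Poincar\'e duality on the complement, the two arguments are the same at heart; your version makes the top-degree bookkeeping explicit rather than hiding it inside the duality statement, and your base case $n=1$ works just as well as the paper's $n=2$.
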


Next, we identify the ring structure in cohomology using algebraic forms with (algebraic) de Rham cohomology. For this, we introduce in Section \ref{sec: forms} a regular one-form $\alpha$ and a regular two-form $\omega$ on $X(\beta)$. We write explicit formulas for $\alpha$ and $\omega$ in terms of both $z_i$ and the independent Pl\"ucker coordinates in Theorems \ref{lem: alpha form} and \ref{lem: Omega formula}.

\begin{theorem}The 1-form $\alpha$ and 2-form $\omega$ generate $H^*(X(\sigma^k))$ as a $\C$-algebra, modulo the following relations:\\
1) If $k$ is even, the only relation is $\omega^{\frac{k}{2}}=0$. The basis in cohomology is given by:
\begin{equation*}
1,\alpha,\omega,\alpha\omega,\ldots,\omega^{\frac{k}{2}-1},\alpha\omega^{\frac{k}{2}-1}.
\end{equation*}
2) If $k$ is odd, the relations are $\alpha\omega^{\frac{k-1}{2}}=\omega^{\frac{k+1}{2}}=0$. The basis in cohomology is given by: 
\begin{equation*}
1,\alpha,\omega,\alpha\omega,\ldots,\alpha\omega^{\frac{k-3}{2}},\omega^{\frac{k-1}{2}}.
\end{equation*}
\end{theorem}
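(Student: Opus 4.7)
The plan is to leverage the rank information from the previous theorem to reduce the statement to a short list of non-vanishing assertions. We already know $\dim H^i(X(\sigma^k);\C)=1$ for $0\le i\le k-1$ and zero otherwise, so the total dimension of $H^*$ is $k$, matching the size of the proposed basis in either parity. The claimed relations $\omega^{k/2}=0$ for $k$ even and $\omega^{(k+1)/2}=\alpha\omega^{(k-1)/2}=0$ for $k$ odd are immediate from degree considerations: these monomials have cohomological degree at least $k$ and so lie in a vanishing cohomology group.

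What remains is to check that each proposed basis element is nonzero, which, because each $H^i$ is one-dimensional, is equivalent to linear independence. By the ring structure, if $\omega^j=0$ then $\omega^{j+1}=0$ and $\alpha\omega^j=0$, and if $\alpha\omega^j=0$ then $\alpha\omega^{j+1}=0$; symmetrically, a nonzero class in top degree forces non-vanishing of all its factors (if $\alpha\omega^{j}\cdot\omega^{m-j}=\alpha\omega^{m}\ne 0$, then $\alpha\omega^j\ne 0$). It therefore suffices to establish non-vanishing of the top class in each family of products: $\alpha\omega^{k/2-1}\in H^{k-1}$ when $k$ is even, and both $\omega^{(k-1)/2}\in H^{k-1}$ and $\alpha\omega^{(k-3)/2}\in H^{k-2}$ when $k$ is odd.

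To verify these top non-vanishings I would work in the algebraic de Rham model, using the explicit formulas for $\alpha$ and $\omega$ from Theorems \ref{lem: alpha form} and \ref{lem: Omega formula} in the chart $X(\sigma^k)\cong\{F_{k-1}\ne 0\}\subset\C^{k-1}$. Each top class is a regular top form on $X(\sigma^k)$ that extends to a meromorphic top form on $\C^{k-1}$ with a pole along $\{F_{k-1}=0\}$; its iterated Poincar\'e residue, or equivalently its pairing against a torus cycle inherited from a cluster chart (for instance one coming from the isomorphism $\Pi_{2,k+1}^{\circ}\cong X(\sigma^k)\times(\C^*)^k$), can be computed recursively using \eqref{eq: def F} and shown to be nonzero. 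The main obstacle will be this residue calculation, which is combinatorially delicate when written out in the $z$-coordinates; an induction on $k$ via a forgetful projection $X(\sigma^k)\to X(\sigma^{k-1})$ together with the associated Leray or Gysin long exact sequence would likely streamline the argument by reducing to the base cases $k=1,2$.
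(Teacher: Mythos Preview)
Your reduction is exactly right and matches the paper's: the dimension count from Theorem~\ref{thm: homology} plus degree considerations reduce everything to checking non-vanishing of the top-degree monomials in $\alpha$ and $\omega$, and non-vanishing of a product forces non-vanishing of its factors.

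Where you diverge is in the verification of these top non-vanishings. You propose residue calculations in the $z$-coordinates or an inductive Leray/Gysin argument, and you correctly flag both as ``combinatorially delicate.'' The paper takes a much shorter path: it restricts along the open inclusion $i:\fan\hookrightarrow X(\sigma^k)$ of the fan cluster chart $\fan\simeq(\C^*)^{k-1}$. If $i^*$ of a class is nonzero in $H^*(\fan)$, the class itself is nonzero. Now $H^*(\fan)$ is the exterior algebra on $\frac{dw_1}{w_1},\ldots,\frac{dw_{k-2}}{w_{k-2}},\frac{dw}{w}$, and in this chart $\omega$ has the explicit form \eqref{eq: Omega fan}: a chain $\sum_i \frac{dw_{i+1}}{w_{i+1}}\wedge\frac{dw_i}{w_i}$ of consecutive wedges. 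Computing powers of such a form, and wedging with $\alpha=\frac{dw}{w}$, is a one-line exterior-algebra exercise (only non-overlapping pairs of consecutive wedges survive), and one reads off directly that $\omega^{(k-1)/2}$, $\alpha\omega^{(k-3)/2}$ (for $k$ odd) and $\alpha\omega^{k/2-1}$ (for $k$ even) are nonzero multiples of explicit monomials in the $\frac{dw_i}{w_i}$.

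So the torus-cycle pairing you mention in passing is in fact the entire argument, once one uses the chart in which $\omega$ is written as \eqref{eq: Omega fan} rather than the $z$-coordinate formula of Lemma~\ref{lem: Omega formula}. Your residue and Leray/Gysin approaches would ultimately work, but they rebuild by hand what the cluster chart gives for free.
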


Next, we study the relation between different two-strand braid varieties. We show that the product of two braid varieties $X(\sigma^a)\times X(\sigma^b)$ can be embedded as an open subset into a larger braid variety $X(\sigma^{a+b-1})$. All such embeddings are parametrized by the diagonals in the $(a+b)$-gon (we refer to them as to diagonal cuts), and we write them explicitly in coordinates.  

\begin{theorem}Performing one diagonal cut on $P$ along $D_{ij}$ defines an injective map \[      \Phi_{ij}:X(\sigma^{a})\times X(\sigma^{b})\longrightarrow X(\sigma^{a+b-1}).\] By Theorem \ref{thm: positroid} we identify $X(\sigma^{a+b-1})$ with $\Pi_{2,a+b}^{o,1}$ and the image of the map is the open subset $\{\Delta_{ij}\neq 0\}$ in $\Pi_{2,a+b}^{o,1}$.
\end{theorem}

We can study the corresponding maps in cohomology of braid varieties.

\begin{theorem}
We have
\begin{equation}
\label{eq: pullback intro}
\Phi_{ij}^*\alpha=\alpha_2+(-1)^{k-j} \alpha_1,\quad 
\Phi_{ij}^*\omega=\omega_1+\omega_2+(-1)^{k-j}\alpha_1\wedge \alpha_2.
\end{equation}
The pullback map in cohomology
$$
\Phi_{ij}^*:H^*(X(\sigma^{k}))\to H^*(X(\sigma^{j-i}))\otimes H^*(X(\sigma^{k-j+i+1}))
$$
is injective.
and can be described by \eqref{eq: pullback intro}.
\end{theorem}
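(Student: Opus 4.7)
The strategy is a direct computation. Since $\alpha$ and $\omega$ generate $H^*(X(\sigma^k))$ as a $\C$-algebra by the preceding theorem, establishing the two identities \eqref{eq: pullback intro} at the level of forms already determines $\Phi_{ij}^*$ on every cohomology class. Injectivity must then be checked separately, since $\Phi_{ij}$ is only an open embedding and restriction to an open subset can kill cohomology classes in general.

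For the pullback formulas themselves, I would substitute the explicit coordinate description of $\Phi_{ij}$ from the previous theorem into the Pl\"ucker formulas of Theorems~\ref{lem: alpha form} and~\ref{lem: Omega formula}. The diagonal $D_{ij}$ partitions the cyclic vertex labels of the $(a+b)$-gon into two arcs, and the Pl\"ucker minors of $X(\sigma^k)$ along each arc restrict to the Pl\"ucker minors of the corresponding factor $X(\sigma^{j-i})$ or $X(\sigma^{k-j+i+1})$; the sums appearing in $\alpha$ and $\omega$ then split along these arcs. The sign $(-1)^{k-j}$ arises from the orientation reversal needed to align one arc with the standard cyclic labeling. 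This is a routine but sign-sensitive calculation.

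With the generators in hand, set $\epsilon=(-1)^{k-j}$. Using $\alpha_i\wedge\alpha_i=0$ and $(\alpha_1\wedge\alpha_2)^2=0$, the binomial expansions truncate to
\begin{align*}
\Phi_{ij}^*\omega^p &= (\omega_1+\omega_2)^p + p\epsilon(\omega_1+\omega_2)^{p-1}\alpha_1\wedge\alpha_2,\\
\Phi_{ij}^*(\alpha\omega^p) &= (\alpha_2+\epsilon\alpha_1)\wedge(\omega_1+\omega_2)^p,
\end{align*}
the second line using $(\alpha_2+\epsilon\alpha_1)\wedge\alpha_1\wedge\alpha_2=0$. Since $\dim H^d(X(\sigma^k))\leq 1$, injectivity reduces to the nonvanishing of $\Phi_{ij}^*$ on each basis monomial. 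In each formula the two summands lie in disjoint K\"unneth bidegrees (even/even versus odd/odd for $\omega^p$, and even/odd versus odd/even for $\alpha\omega^p$), so it suffices that one K\"unneth monomial in one of the two summands survives. For $p$ strictly below the nilpotency ceilings of the two factors one finds an index $i$ with $\omega_1^i\omega_2^{p-i}\neq 0$; when $p$ is at the top and the first summand vanishes, the correction summand contributes a monomial of the form $\alpha_1\omega_1^i\cdot\alpha_2\omega_2^{p-1-i}$ that does survive. \emph{The main obstacle} is exactly this case analysis: one has to track the relations $\omega_i^{\lceil\cdot/2\rceil}=0$ and (for odd factors) $\alpha_i\omega_i^{\lceil\cdot/2\rceil-1}=0$ through all parities of $a$, $b$, and $k$, and verify in every case that at least one K\"unneth monomial in the pullback survives.
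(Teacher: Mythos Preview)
Your injectivity argument is essentially the same as the paper's: expand $\Phi_{ij}^*(\omega^p)$ and $\Phi_{ij}^*(\alpha\omega^p)$ multinomially, separate by K\"unneth bidegree, and exhibit one surviving monomial in each parity case. Your observation that the two summands live in disjoint K\"unneth bidegrees is a clean way to organize what the paper does by brute-force case splitting on the parities of $a$ and $b$.

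Where your proposal is thin is the derivation of \eqref{eq: pullback intro} itself. You suggest substituting $\Phi_{ij}$ into the $z_i$-coordinate formula for $\omega$ (Lemma~\ref{lem: Omega formula}); the paper does \emph{not} do this, and for good reason: $\Phi_{ij}$ is described in terms of the vectors $v_m$ and rescalings, not in $z$-coordinates, so that substitution would be painful. Instead the paper works in a cluster chart whose triangulation \emph{contains} the diagonal $D_{ij}$ (Figure~\ref{fig: 2-form cuts}). In such a chart $\omega$ is a sum of $d\log\Delta\wedge d\log\Delta'$ terms along the quiver, and the effect of $\Phi_{ij}^{-1}$ is simply to rescale each $\Delta_{1,m}$ with $m\ge j$ by $\Delta_{ij}^{(-1)^{m-j+1}}$. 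The form $\omega_2$ then splits into a ``pre-triangle'' part (vertices up to $i$), the triangle $1,i,j$, and a ``post-triangle'' part (vertices from $j$ on); in the post-triangle part the alternating $d\log\Delta_{ij}$ contributions telescope, leaving a single residual term $(-1)^{k-j+1}d\log\Delta_{1,k+1}\wedge d\log\Delta_{ij}$, which is exactly $-(-1)^{k-j}\alpha_1\wedge\alpha_2$.

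This also corrects your explanation of the sign: $(-1)^{k-j}$ does \emph{not} come from an orientation reversal of an arc. Both arcs keep their cyclic order. The sign records the exponent $\Delta_{ij}^{(-1)^{k-j+2}}$ by which the last vector $v_{k+1}$ is rescaled under the cut, so that the frozen variable of the second factor is $\Delta_{1,k+1}\Delta_{ij}^{(-1)^{k-j}}$ rather than $\Delta_{1,k+1}$ itself.
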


\begin{theorem}
    The map $\Phi_{ij}$ defines a  quasi-equivalence of cluster varietes $\{\Delta_{ij}\neq 0\}\subset X(\sigma^{a+b-1})$ and $X(\sigma^{a})\times X(\sigma^{b})$. The latter has a cluster structure obtained by freezing $\Delta_{ij}$ in the cluster structure from $X(\sigma^{a+b-1})$.
\end{theorem}

\begin{remark}
By Gorsky and Hogancamp \textcolor{red}{\cite{GH}}, on the level of knot homology, the maps 
$X(\sigma^a)\times X(\sigma^b)\to X(\sigma^{a+b-1})$ correspond via Theorem \ref{thm: trinh intro} to the maps
$$
\mathrm{HHH}(\sigma^{a-1})\otimes \mathrm{HHH}(\sigma^{b-1})\to \mathrm{HHH}(\sigma^{a+b-2})
$$
induced by the cobordism between the closures of the corresponding braids $T(2,a-1)\sqcup T(2,b-1)$ and $T(2,a+b-2)$. 
\end{remark}

\begin{figure}
    \centering
    \tikzset{every picture/.style={line width=0.75pt}} 

\begin{tikzpicture}[x=0.75pt,y=0.75pt,yscale=-1,xscale=1]

\draw   (143.57,81.23) -- (131.06,118.74) -- (98.3,141.92) -- (57.81,141.92) -- (25.05,118.74) -- (12.54,81.23) -- (25.05,43.73) -- (57.81,20.55) -- (98.3,20.55) -- (131.06,43.73) -- cycle ;
\draw  [dash pattern={on 4.5pt off 4.5pt}]  (12.54,81.23) -- (131.06,43.73) ;
\draw  [dash pattern={on 4.5pt off 4.5pt}]  (27.07,118.49) -- (131.06,118.74) ;
\draw [fill={rgb, 255:red, 183; green, 134; blue, 225 }  ,fill opacity=1 ]   (179.45,69.86) -- (297.22,31.9) ;
\draw [fill={rgb, 255:red, 183; green, 134; blue, 225 }  ,fill opacity=1 ]   (179.7,69.86) -- (193.41,32.89) ;
\draw [fill={rgb, 255:red, 183; green, 134; blue, 225 }  ,fill opacity=1 ]   (193.41,32.89) -- (223.29,11.23) ;
\draw [fill={rgb, 255:red, 183; green, 134; blue, 225 }  ,fill opacity=1 ]   (223.29,11.23) -- (263.29,11.23) ;
\draw [fill={rgb, 255:red, 183; green, 134; blue, 225 }  ,fill opacity=1 ]   (263.29,11.23) -- (297.48,31.9) ;
\draw    (349.68,71.91) -- (362.34,111.03) ;
\draw    (349.68,71.91) -- (467.93,34.26) ;
\draw    (467.93,34.26) -- (480.59,71.66) ;
\draw    (480.33,71.66) -- (466.41,110.79) ;
\draw    (362.09,111.03) -- (466.41,110.79) ;
\draw    (195.22,119.01) -- (226.87,141.89) ;
\draw  [dash pattern={on 4.5pt off 4.5pt}]  (195.48,119.01) -- (208.65,119.04) -- (295.49,119.26) ;
\draw    (226.37,141.89) -- (265.61,141.89) ;
\draw    (266.12,141.89) -- (295.24,119.26) ;
\draw    (194.97,119.01) -- (181.17,79.64) ;
\draw [fill={rgb, 255:red, 183; green, 134; blue, 225 }  ,fill opacity=1 ]   (181.17,79.64) -- (298.15,43.22) ;
\draw [fill={rgb, 255:red, 183; green, 134; blue, 225 }  ,fill opacity=1 ]   (298.15,43.22) -- (312.33,81.12) ;
\draw    (294.99,119.26) -- (312.33,81.12) ;
\draw    (347.37,62.92) -- (465.14,24.97) ;
\draw    (347.62,62.37) -- (361.33,25.4) ;
\draw    (361.33,25.95) -- (391.21,4.3) ;
\draw    (391.21,4.3) -- (431.21,4.3) ;
\draw    (431.21,4.3) -- (465.4,24.97) ;
\draw    (367.4,119.15) -- (399.05,142.04) ;
\draw    (367.4,119.15) -- (380.57,119.18) -- (467.42,119.4) ;
\draw    (398.04,141.79) -- (437.29,141.79) ;
\draw    (438.05,142.04) -- (467.17,119.4) ;

\draw (5,78.65) node [anchor=north west][inner sep=0.75pt]  [font=\scriptsize] [align=left] {$\displaystyle i$};
\draw (173.04,71.19) node [anchor=north west][inner sep=0.75pt]  [font=\scriptsize] [align=left] {$\displaystyle i$};
\draw (14.76,115.8) node [anchor=north west][inner sep=0.75pt]  [font=\scriptsize] [align=left] {$\displaystyle i'$};
\draw (339.97,64.64) node [anchor=north west][inner sep=0.75pt]  [font=\scriptsize] [align=left] {$\displaystyle i$};
\draw (353.24,112.72) node [anchor=north west][inner sep=0.75pt]  [font=\scriptsize] [align=left] {$\displaystyle i'$};
\draw (182.4,115.77) node [anchor=north west][inner sep=0.75pt]  [font=\scriptsize] [align=left] {$\displaystyle i'$};
\draw (136.86,37.13) node [anchor=north west][inner sep=0.75pt]  [font=\scriptsize] [align=left] {$\displaystyle j$};
\draw (302.17,30.11) node [anchor=north west][inner sep=0.75pt]  [font=\scriptsize] [align=left] {$\displaystyle j$};
\draw (471.44,20.06) node [anchor=north west][inner sep=0.75pt]  [font=\scriptsize] [align=left] {$\displaystyle j$};
\draw (136.86,113.18) node [anchor=north west][inner sep=0.75pt]  [font=\scriptsize] [align=left] {$\displaystyle j'$};
\draw (301,113.59) node [anchor=north west][inner sep=0.75pt]  [font=\scriptsize] [align=left] {$\displaystyle j'$};
\draw (472.22,109.22) node [anchor=north west][inner sep=0.75pt]  [font=\scriptsize] [align=left] {$\displaystyle j'$};
\draw (46.2,83.45) node [anchor=north west][inner sep=0.75pt]  [font=\scriptsize] [align=left] {$\displaystyle X\left( \sigma ^{a+b+c-2}\right)$};
\draw (215.48,24.05) node [anchor=north west][inner sep=0.75pt]  [font=\scriptsize] [align=left] {$\displaystyle X\left( \sigma ^{a}\right)$};
\draw (383.09,18.37) node [anchor=north west][inner sep=0.75pt]  [font=\scriptsize] [align=left] {$\displaystyle X\left( \sigma ^{a}\right)$};
\draw (220.77,87.79) node [anchor=north west][inner sep=0.75pt]  [font=\scriptsize] [align=left] {$\displaystyle X\left( \sigma ^{b+c-1}\right)$};
\draw (400.3,72.8) node [anchor=north west][inner sep=0.75pt]  [font=\scriptsize] [align=left] {$\displaystyle X( \sigma ^{b}$)};
\draw (401.33,122.11) node [anchor=north west][inner sep=0.75pt]  [font=\scriptsize] [align=left] {$\displaystyle X\left( \sigma ^{c}\right)$};
\draw (392.01,142.88) node [anchor=north west][inner sep=0.75pt]  [font=\scriptsize] [align=left] {$\displaystyle 1$};
\draw (222.24,143.31) node [anchor=north west][inner sep=0.75pt]  [font=\scriptsize] [align=left] {$\displaystyle 1$};
\draw (52.86,144.21) node [anchor=north west][inner sep=0.75pt]  [font=\scriptsize] [align=left] {$\displaystyle 1$};
\draw (430.97,145.5) node [anchor=north west][inner sep=0.75pt]  [font=\scriptsize] [align=left] {$\displaystyle k+1$};
\draw (256.88,144.62) node [anchor=north west][inner sep=0.75pt]  [font=\scriptsize] [align=left] {$\displaystyle k+1$};
\draw (89.67,144.21) node [anchor=north west][inner sep=0.75pt]  [font=\scriptsize] [align=left] {$\displaystyle k+1$};

\end{tikzpicture}
    \tikzset{every picture/.style={line width=0.75pt}} 

\begin{tikzpicture}[x=0.75pt,y=0.75pt,yscale=-1,xscale=1]

\draw   (132.32,85.41) -- (120.29,124.71) -- (88.8,149) -- (49.87,149) -- (18.38,124.71) -- (6.35,85.41) -- (18.38,46.1) -- (49.87,21.81) -- (88.8,21.81) -- (120.29,46.1) -- cycle ;
\draw  [dash pattern={on 4.5pt off 4.5pt}]  (18.38,124.71) -- (49.87,21.81) ;
\draw  [dash pattern={on 4.5pt off 4.5pt}]  (88.8,21.81) -- (120.29,124.71) ;
\draw    (332.79,123.42) -- (320.45,84.89) ;
\draw    (320.45,84.89) -- (333.42,46.37) ;
\draw    (333.42,46.37) -- (362.84,23.11) ;
\draw    (362.84,23.11) -- (332.79,123.42) ;
\draw    (453.64,124.46) -- (466.62,85.93) ;
\draw    (466.62,85.93) -- (452.97,47.41) ;
\draw    (452.97,47.41) -- (422.01,24.15) ;
\draw    (422.01,24.15) -- (453.64,124.46) ;
\draw    (171.96,123.27) -- (159.62,84.75) ;
\draw    (159.93,84.75) -- (172.9,46.22) ;
\draw    (172.9,46.22) -- (202.33,22.96) ;
\draw    (202.33,22.96) -- (172.27,123.27) ;
\draw    (284.42,124.66) -- (295.66,85.79) ;
\draw    (295.66,85.79) -- (283.76,47.61) ;
\draw    (283.76,47.61) -- (252.64,24.35) ;
\draw  [dash pattern={on 4.5pt off 4.5pt}]  (252.64,24.35) -- (284.42,124.66) ;
\draw    (284.42,124.66) -- (253.27,148.96) ;
\draw    (253.27,148.61) -- (210.87,148.61) ;
\draw    (210.87,148.61) -- (180.5,124.66) ;
\draw    (180.5,125.01) -- (212.14,24.7) ;
\draw    (212.14,24.7) -- (252.64,24.35) ;
\draw    (412.52,25.19) -- (444.3,125.5) ;
\draw    (444.3,125.5) -- (413.15,149.8) ;
\draw    (413.15,149.45) -- (370.75,149.45) ;
\draw    (370.75,149.45) -- (340.38,125.5) ;
\draw    (340.38,125.85) -- (372.02,25.54) ;
\draw    (372.02,25.54) -- (412.52,25.19) ;
\draw    (162.41,41.39) .. controls (161.28,42.23) and (171.79,52.86) .. (179.1,59.37) ;
\draw [shift={(180.57,60.66)}, rotate = 220.82] [color={rgb, 255:red, 0; green, 0; blue, 0 }  ][line width=0.75]    (10.93,-3.29) .. controls (6.95,-1.4) and (3.31,-0.3) .. (0,0) .. controls (3.31,0.3) and (6.95,1.4) .. (10.93,3.29)   ;
\draw    (322.41,40.99) .. controls (321.28,41.83) and (331.79,52.46) .. (339.1,58.97) ;
\draw [shift={(340.57,60.26)}, rotate = 220.82] [color={rgb, 255:red, 0; green, 0; blue, 0 }  ][line width=0.75]    (10.93,-3.29) .. controls (6.95,-1.4) and (3.31,-0.3) .. (0,0) .. controls (3.31,0.3) and (6.95,1.4) .. (10.93,3.29)   ;
\draw    (460.69,40.54) .. controls (463.06,44.75) and (455.63,54.91) .. (447.6,61.69) ;
\draw [shift={(446.06,62.95)}, rotate = 321.91] [color={rgb, 255:red, 0; green, 0; blue, 0 }  ][line width=0.75]    (10.93,-3.29) .. controls (6.95,-1.4) and (3.31,-0.3) .. (0,0) .. controls (3.31,0.3) and (6.95,1.4) .. (10.93,3.29)   ;

\draw (414.31,11.34) node [anchor=north west][inner sep=0.75pt]  [font=\scriptsize] [align=left] {$\displaystyle i'$};
\draw (254.66,11.29) node [anchor=north west][inner sep=0.75pt]  [font=\scriptsize] [align=left] {$\displaystyle i'$};
\draw (90.62,8.7) node [anchor=north west][inner sep=0.75pt]  [font=\scriptsize] [align=left] {$\displaystyle i'$};
\draw (446.21,129.47) node [anchor=north west][inner sep=0.75pt]  [font=\scriptsize] [align=left] {$\displaystyle j'$};
\draw (286.33,128.63) node [anchor=north west][inner sep=0.75pt]  [font=\scriptsize] [align=left] {$\displaystyle j'$};
\draw (122.2,128.68) node [anchor=north west][inner sep=0.75pt]  [font=\scriptsize] [align=left] {$\displaystyle j'$};
\draw (328.79,128.77) node [anchor=north west][inner sep=0.75pt]  [font=\scriptsize] [align=left] {$\displaystyle i$};
\draw (169.53,127.38) node [anchor=north west][inner sep=0.75pt]  [font=\scriptsize] [align=left] {$\displaystyle i$};
\draw (6.56,127.92) node [anchor=north west][inner sep=0.75pt]  [font=\scriptsize] [align=left] {$\displaystyle i$};
\draw (365.95,11.56) node [anchor=north west][inner sep=0.75pt]  [font=\scriptsize] [align=left] {$\displaystyle j$};
\draw (205.51,10.17) node [anchor=north west][inner sep=0.75pt]  [font=\scriptsize] [align=left] {$\displaystyle j$};
\draw (42.14,8.02) node [anchor=north west][inner sep=0.75pt]  [font=\scriptsize] [align=left] {$\displaystyle j$};
\draw (364.82,151.18) node [anchor=north west][inner sep=0.75pt]  [font=\scriptsize] [align=left] {$\displaystyle 1$};
\draw (205.23,151.13) node [anchor=north west][inner sep=0.75pt]  [font=\scriptsize] [align=left] {$\displaystyle 1$};
\draw (44.63,151.67) node [anchor=north west][inner sep=0.75pt]  [font=\scriptsize] [align=left] {$\displaystyle 1$};
\draw (79.81,151.23) node [anchor=north west][inner sep=0.75pt]  [font=\scriptsize] [align=left] {$\displaystyle k+1$};
\draw (403.63,151.63) node [anchor=north west][inner sep=0.75pt]  [font=\scriptsize] [align=left] {$\displaystyle k+1$};
\draw (243.64,151.13) node [anchor=north west][inner sep=0.75pt]  [font=\scriptsize] [align=left] {$\displaystyle k+1$};
\draw (36.34,97.45) node [anchor=north west][inner sep=0.75pt]  [font=\scriptsize] [align=left] {$\displaystyle X\left( \sigma ^{a+b+c-2}\right)$};
\draw (145.25,25.61) node [anchor=north west][inner sep=0.75pt]  [font=\scriptsize] [align=left] {$\displaystyle X\left( \sigma ^{a}\right)$};
\draw (304.75,25.75) node [anchor=north west][inner sep=0.75pt]  [font=\scriptsize] [align=left] {$\displaystyle X\left( \sigma ^{a}\right)$};
\draw (207.22,97) node [anchor=north west][inner sep=0.75pt]  [font=\scriptsize] [align=left] {$\displaystyle X\left( \sigma ^{b+c-1}\right)$};
\draw (379.39,96.73) node [anchor=north west][inner sep=0.75pt]  [font=\scriptsize] [align=left] {$\displaystyle X( \sigma ^{b}$)};
\draw (448.17,23.67) node [anchor=north west][inner sep=0.75pt]  [font=\scriptsize] [align=left] {$\displaystyle X\left( \sigma ^{c}\right)$};

\end{tikzpicture}
    \caption{Examples of two diagonal cuts. The top is shows a Type A cut and the bottom shows a Type B cut.}
    \label{fig:two cuts intro}
\end{figure}
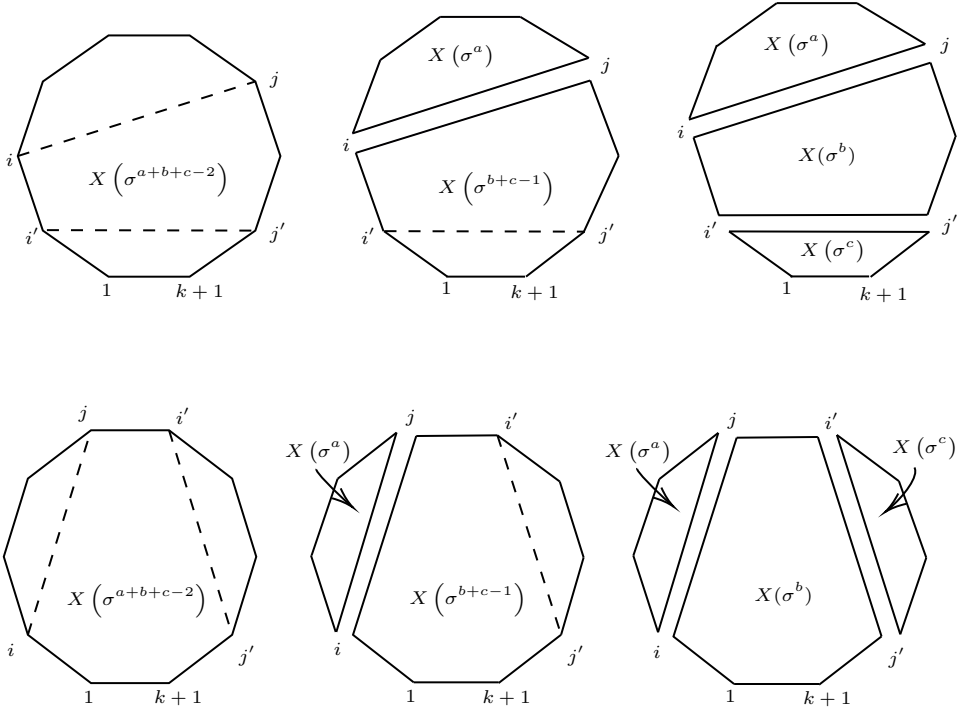

Finally, we study the interactions between the maps $\Phi_{ij}$ associated to different cuts, which can be thought of as associativity of ``gluing" of different braid varieties 
$$X(\sigma^a)\times X(\sigma^b)\times X(\sigma^c)\to X(\sigma^{a+b+c-2}).
$$
Actually, it happens that there are two different cases which we call ``Type A" and ``Type B" cuts (see Figure \ref{fig:two cuts intro} and \ref{fig:two cuts}).

\begin{theorem} Performing two diagonal cuts on $P$ along $\Delta_{ij}$ and $\Delta_{i'j'}$ we have two commutative diagrams
\begin{enumerate}[label=(\roman*)]
    \item For Type A cuts  
$$
\begin{tikzcd}
	{X(\sigma^a)\times X(\sigma^b)\times X(\sigma^c)} && {X(\sigma^{a+b-1})\times X(\sigma^c)} \\
	\\
	{X(\sigma^a)\times X(\sigma^{b+c-1})} && {X(\sigma^{a+b+c-2})}
	\arrow["{\Phi_{ij}\times \Id}", from=1-1, to=1-3]
	\arrow["{\Id\times \Phi_{i'j'}}"', from=1-1, to=3-1]
	\arrow["{\Phi_{ij}}"', from=3-1, to=3-3]
	\arrow["{\Phi_{i'j'}}", from=1-3, to=3-3]
\end{tikzcd}
$$
\item For Type B cuts 
        $$
\begin{tikzcd}
	{X(\sigma^a)\times X(\sigma^b)\times X(\sigma^c)} && {X(\sigma^a)\times X(\sigma^{b+c-1})} && {X(\sigma^{a+b+c-2})} \\
	\\
	{X(\sigma^a)\times X(\sigma^b)\times X(\sigma^c)} &&&& {X(\sigma^{a+b-1})\times X(\sigma^c)}
	\arrow["{\Id\times \Id\times T_{\Delta_{ij}}}"', from=1-1, to=3-1]
	\arrow["{\Id\times\Phi_{i'j'}}", from=1-1, to=1-3]
	\arrow["{\Phi_{ij}}", from=1-3, to=1-5]
	\arrow["{\Phi_{ij}\times \Id}", from=3-1, to=3-5]
	\arrow["{\Phi_{i'j'}}"', from=3-5, to=1-5]
\end{tikzcd}
        $$
        Here $T_{\Delta_{ij}}$ preserves $\Pi_{2,c+1}^{\circ,1}$ and defines a $\C^*$ action on $\Pi_{2,c+1}^{\circ}$ and $\Pi_{2,c+1}^{\circ,1}$ as defined in Lemma \ref{lem: def T} with $\lambda=\Delta_{ij}$.
        Informally, we can say that the gluing $P$ from smaller polygons is associative only up to the additional transformation $T_{\Delta_{ij}}$.
\end{enumerate}

\end{theorem}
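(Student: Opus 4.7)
The plan is to work in the Pl\"ucker picture via the isomorphism $X(\sigma^k)\cong\Pi^{\circ,1}_{2,k+1}$ from Theorem 1.2. A point of $\Pi^{\circ,1}_{2,k+1}$ is a cyclic configuration $(v_1,\dots,v_{k+1})$ of vectors in $\C^2$ with $\det(v_s,v_{s+1})=1$, and the gluing $\Phi_{ij}$ should be described by taking a pair of such configurations on the sub-polygons cut off by $D_{ij}$, applying the unique $SL_2$-transformation that matches them along the shared edge, and combining them into a single configuration on the $(a+b)$-gon with $\Delta_{ij}\ne 0$ and all other consecutive minors equal to $1$. The first step is to write out each arrow in both diagrams explicitly in these coordinates, so that the composites become alternating products of combining operations and matching $SL_2$-rescalings.

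For part (i), the Type A hypothesis is that the two diagonals are nested, so the vertex set of $P$ splits into three blocks corresponding to $X(\sigma^a)$, $X(\sigma^b)$, $X(\sigma^c)$. The rescalings needed for $\Phi_{ij}$ act only on the $a$-block together with the combined $bc$-block, while those needed for $\Phi_{i'j'}$ act only on the $b$- and $c$-blocks, and on overlapping boundary vertices the two rescalings agree; hence both composites produce literally the same configuration on the $(a+b+c-2)$-gon, and commutativity reduces to associativity of matrix multiplication. For part (ii), in a Type B arrangement the two diagonals share a vertex of the middle $b$-piece, and tracing the two composites exhibits a single mismatch: the gluing vectors of the $c$-piece inherit an extra factor of $\Delta_{ij}^{\pm 1}$ depending on which order the gluings are performed. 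This is precisely the $\C^*$-action from Lemma \ref{lem: def T} (which preserves the normalization $\Delta_{s,s+1}=1$, hence acts on $\Pi^{\circ,1}_{2,c+1}$), so inserting $T_{\Delta_{ij}}$ on the $c$-factor exactly repairs the discrepancy.

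The main obstacle will be bookkeeping: keeping track of several $SL_2$-frames and rescalings across two successive gluings without sign or scaling errors. I would organize the computation by fixing a reference frame at one vertex of $P$ and writing each $\Phi_{ij}$ as multiplication of the two pieces of boundary data by a single explicit $2\times 2$ matrix encoding the diagonal $D_{ij}$. Part (i) then reduces directly to associativity of matrix multiplication, while part (ii) reduces to computing a single commutator of scalings on the shared vertex of the $b$-piece, which picks up exactly the factor $\Delta_{ij}$ defining $T_{\Delta_{ij}}$.
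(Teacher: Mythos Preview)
Your proposal is correct and follows essentially the same route as the paper: both arguments pass to the configuration model $\Pi^{\circ,1}_{2,k+1}$, write $\Phi_{ij}$ as an insertion of one configuration into another together with an explicit column rescaling by powers of $\Delta_{ij}$, and then trace the two composites to check that the rescaling factors either cancel (Type A) or leave exactly the alternating $\Delta_{ij}^{\pm 1}$ rescaling of the $c$-piece that defines $T_{\Delta_{ij}}$ (Type B). The paper carries this out by explicitly writing down the intermediate matrices $V_1,\dots,V_6$ and $W_1,\dots,W_7$ and comparing the exponents of $\Delta_{ij}$ and $\Delta_{i'j'}$ on each vector, whereas you package the same bookkeeping as ``associativity of matrix multiplication plus a commutator of scalings''; one small caution is that your phrase ``the two diagonals share a vertex of the middle $b$-piece'' is not literally the Type B hypothesis ($1\le i<j\le i'<j'\le k+1$ allows $j<i'$), and the nontrivial content even in Type A is the cancellation of the $\Delta_{ij}$-factors on the $c$-block rather than pure matrix associativity, so when you execute the plan you will still need to do the exponent computation the paper does.
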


\section{Braid varieties}

\subsection{Definition of braid varieties}
      We consider the standard definition of the braid group on $n$ strands, $\Br_n$, given by the presentation 
      \[\Br_n=\langle \sigma_1,\ldots,\sigma_{n-1}\;\textbf{:}\;\sigma_i\sigma_{i+1}\sigma_i=\sigma_{i+1}\sigma_i\sigma_{i+1},\,\sigma_i\sigma_j=\sigma_j\sigma_i\text{ if }|i-j|>1\rangle\]
      where $\sigma_i$ is the positive crossing defined by  
      
       \vspace{.5cm} 
      \begin{center}
          \tikzset{every picture/.style={line width=0.75pt}} 

\begin{tikzpicture}[x=0.75pt,y=0.75pt,yscale=-1,xscale=1]

\draw    (111.62,108.83) .. controls (172.75,105.39) and (161.63,140.82) .. (228.61,132.95) ;
\draw    (112.16,134.09) .. controls (112.74,133.21) and (123.53,137.42) .. (166.64,125.07) ;
\draw    (173.69,121.49) .. controls (174.27,120.61) and (187.13,110.86) .. (228.17,112.48) ;
\draw    (315.73,135.5) .. controls (376.82,138.83) and (366.16,107.29) .. (433.03,114.59) ;
\draw    (316.6,113.05) .. controls (317.17,113.83) and (328.01,110.14) .. (370.96,121.31) ;
\draw    (377.97,124.52) .. controls (378.53,125.3) and (391.27,134.03) .. (432.33,132.77) ;
\draw    (111.45,89.71) -- (229.1,89.35) ;
\draw    (317.07,157.49) -- (434.71,157.51) ;
\draw    (319.09,89.56) -- (436.74,89.58) ;
\draw    (112.54,158.53) -- (230.18,158.17) ;

\draw (164.49,92.57) node [anchor=north west][inner sep=0.75pt]  [font=\small,rotate=-0.12] [align=left] {$\displaystyle \vdots $};
\draw (371.71,129.51) node [anchor=north west][inner sep=0.75pt]  [font=\small,rotate=-0.34] [align=left] {$\displaystyle \vdots $};
\draw (371.07,89.14) node [anchor=north west][inner sep=0.75pt]  [font=\small,rotate=-0.34] [align=left] {$\displaystyle \vdots $};
\draw (163.63,130.08) node [anchor=north west][inner sep=0.75pt]  [font=\small,rotate=-0.12] [align=left] {$\displaystyle \vdots $};
\draw (163.11,168.57) node [anchor=north west][inner sep=0.75pt]   [align=left] {$\displaystyle \sigma _{i}$};
\draw (364.44,164.28) node [anchor=north west][inner sep=0.75pt]  [rotate=-0.88] [align=left] {$\displaystyle \sigma _{i}^{-1}$};
\draw (442.55,152.43) node [anchor=north west][inner sep=0.75pt]  [font=\scriptsize,rotate=-0.5] [align=left] {$\displaystyle 1$};
\draw (446.53,83.7) node [anchor=north west][inner sep=0.75pt]  [font=\scriptsize,rotate=-359.14] [align=left] {$\displaystyle n$};
\draw (443.98,126.02) node [anchor=north west][inner sep=0.75pt]  [font=\scriptsize,rotate=-358.85] [align=left] {$\displaystyle i$};
\draw (444.39,108.75) node [anchor=north west][inner sep=0.75pt]  [font=\scriptsize,rotate=-0.44] [align=left] {$\displaystyle i+1$};
\draw (237.53,152.43) node [anchor=north west][inner sep=0.75pt]  [font=\scriptsize,rotate=-0.5] [align=left] {$\displaystyle 1$};
\draw (241.51,83.7) node [anchor=north west][inner sep=0.75pt]  [font=\scriptsize,rotate=-359.14] [align=left] {$\displaystyle n$};
\draw (238.95,126.02) node [anchor=north west][inner sep=0.75pt]  [font=\scriptsize,rotate=-358.85] [align=left] {$\displaystyle i$};
\draw (239.37,108.75) node [anchor=north west][inner sep=0.75pt]  [font=\scriptsize,rotate=-0.44] [align=left] {$\displaystyle i+1$};

\end{tikzpicture}
      \end{center}
             \vspace{.5cm}

      We consider the positive braid monoid $\Br_n^+\subseteq \Br_n$ which is generated by the nonnegative powers of the generators $\sigma_i$, for $i\in[1,n-1]$. We follow the notations in  \cite{CGGLSS}.\\

      \begin{definition}
          Let $n\in\N$, $i\in[1,n-1]\in\N$ and $z$ a (complex) variable. Then the braid matrix $B_i(z)\in GL(n,\C[z])$ is defined 
      
      \[(B_i(z))_{jk}:=\begin{cases} 1 &j=k\text{ and }j\neq i,i+1\\-1 &(j,k)=(i,i+1)\\1 &(i+1,i)\\z&j=k=i\\0&\text{otherwise}\end{cases},\;\;\text{i.e.}\;\; B_i(z):=\begin{pmatrix}1&\cdots&&&\ldots&0\\ \vdots&\ddots&&&&\vdots\\ 0&\cdots&z&-1&\cdots&0\\ 0&\cdots&1&0&\cdots&0\\ \vdots&&&&\ddots&\vdots\\ 0&\cdots&&&\cdots&1\end{pmatrix}\]
      Given a positive braid word $\beta=\sigma_{i_1}\cdots\sigma_{i_r}\in \Br_n^+$ and $z_1,\ldots,z_r$ complex variables, define the braid matrix \[B_\beta(z_1,\ldots,z_r)=B_{i_1}(z_1)\cdots B_{i_r}(z_r)\in GL(n,\C[z_1,\ldots,z_r]).\]
      
      \end{definition}
      
      Braid matrices satisfy the braid relations up to a change of variables given as
      \begin{align*}
          B_i(z_1)B_{i+1}(z_2)B_i(z_3)&=B_{i+1}(z_3)B_i(z_1z_3-z_2)B_{i+1}(z_1),&\text{for all } i\in[1,n-2]\\
          B_i(z_1)B_j(z_z)&=B_j(z_2)B_i(z_1), &\text{for } |i-j|>1.
      \end{align*}\\

      \vspace{-.25cm}
      \begin{figure}
         \centering
         \tikzset{every picture/.style={line width=0.75pt}} 

\begin{tikzpicture}[x=0.75pt,y=0.75pt,yscale=-1,xscale=1]

\draw    (120.65,90.35) .. controls (184.68,78.46) and (178.05,194.4) .. (228.21,123.1) ;
\draw    (121.33,141.54) .. controls (154.15,149.26) and (173.15,130.58) .. (171.99,125.56) ;
\draw    (177.78,116) .. controls (234.23,39.3) and (243.67,210.64) .. (286.41,121.59) ;
\draw    (237.37,113.74) .. controls (293.82,37.05) and (299.4,206.53) .. (342.62,119.13) ;
\draw    (293.61,110.46) .. controls (352.75,51.6) and (338.67,167.58) .. (398.5,142.23) ;
\draw    (351.98,104.83) .. controls (363.08,80.74) and (398.82,100.17) .. (397.66,95.16) ;
\draw  [dash pattern={on 4.5pt off 4.5pt}]  (120,70.5) -- (120.5,160) ;
\draw  [dash pattern={on 4.5pt off 4.5pt}]  (399.5,70.5) -- (400,160) ;

\draw (167.97,142.34) node [anchor=north west][inner sep=0.75pt]  [font=\footnotesize] [align=left] {$\displaystyle z_{1}$};
\draw (342.2,142.34) node [anchor=north west][inner sep=0.75pt]  [font=\footnotesize] [align=left] {$\displaystyle z_{4}$};
\draw (286.83,143.17) node [anchor=north west][inner sep=0.75pt]  [font=\footnotesize] [align=left] {$\displaystyle z_{3}$};
\draw (226.72,143.99) node [anchor=north west][inner sep=0.75pt]  [font=\footnotesize] [align=left] {$\displaystyle z_{2}$};

\end{tikzpicture}
         \vspace{-1.5cm}
         \caption{The braid $\sigma_1^4$ with each crossing $j$ labeled with a complex variable $z_j$.}
         \label{fig:braid variables}
     \end{figure}
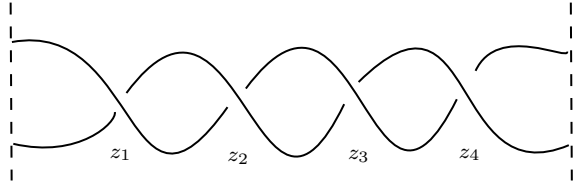

      This paper concerns only braids on two strands, for the remainder of the paper we will refer to a two strand braid with $k$ crossings as $\sigma^k$. The braid matrices on two strands are given by
$$
B(z)=\left(
\begin{matrix}
z & -1\\
1 & 0\\
\end{matrix}
\right)
$$

\begin{definition}
The braid variety $X(\sigma^k)$ on two strands is defined by the equation
$$X(\sigma^k):=\left\{(z_1,\ldots,z_k) :\:
\left(
\begin{matrix}
0 & -1\\
1 & 0\\
\end{matrix}
\right)B(z_1)\cdots B(z_k)\ \text{is upper-triangular}.\right\}
$$
\end{definition}
 If $\beta$ and $\beta'$ are related by braid moves then $X(\beta)\simeq X(\beta')$, this isomorphism arises from the invariance of braid matrices. It is easy to see that the use of $-1$ does not affect that definition of $X(\beta)$.\\

To develop a general understanding of these varieties we first consider cases of a small number of crossings.
\begin{example}
    Let $\beta=\sigma^1\in \Br^+_2$, the braid matrix is given by \[B_\beta(z_1)=\begin{pmatrix} z_1 &-1\\1&0\end{pmatrix}\]
    Therefore, the braid variety is defined as
    \begin{align*}
        X(\sigma^1)&=\left\{z_1:\; 
\left(
\begin{matrix}
0 & -1\\
1 & 0\\
\end{matrix}
\right)\begin{pmatrix} z_1 &-1\\1&0\end{pmatrix}\  \text{is upper-triangular}.\right\}\\
&=\left\{z_1:\begin{pmatrix}
    -1 &0\\z_1&-1
\end{pmatrix}\ \text{is upper triangular}\right\}
=\{z_1=0\}.
\end{align*}
More precisely, $X(\sigma^1)$ is a point.
\end{example}

\begin{example}
    Let $\beta=\sigma^2\in \Br^+_2$ with braid matrix 
            \begin{align*} 
            B_\beta(z_1,z_2)
            =\begin{pmatrix}
                z_1z_2-1 &-z_1\\z_2 &-1
            \end{pmatrix}
            \end{align*}
            then the braid variety associated to $\beta$ is
            \begin{align*}
                X(\sigma^2)&=\left\{(z_1,z_2) :\; \begin{pmatrix}
        0 & -1\\
        1 & 0\\
        \end{pmatrix}
        \begin{pmatrix}
                z_1z_2-1 &-z_1\\z_2 &-1
            \end{pmatrix}\ \text{is upper-triangular}.\right\}\\
            &=\{(z_1,z_2)\in\C^2:z_1z_2-1=0\}
            \cong\{z_1\in\C:z_1\neq0\}
            \end{align*}
        It is important to note that the choice of coordinate $z_1$ on $X(\sigma^2)=\{z_1\ne0\}$ is not unique in this case, we may have also chosen $X(\sigma^2)=\{z_2\ne0\}$. However, the choice of $X(\sigma^2)=\{z_1\ne0\}$ is helpful when developing an inductive way to describe the braid variety in order to compute its cohomology.
\end{example}

\begin{example}
    Let $\beta=\sigma^3\in \Br^+_2$ with braid matrix 
            \begin{align*} 
            B_\beta(z_1,z_2,z_3)
            =\begin{pmatrix}
                z_1z_2z_3-z_3-z_1 &1-z_1z_2\\z_2z_3-1 &-z_2
            \end{pmatrix}
            \end{align*}
            then the braid variety associated to $\beta$ is 
            \begin{align*}
                X(\sigma^3)&=\left\{(z_1,z_2,z_3):\; \begin{pmatrix}
        0 & -1\\
        1 & 0\\
        \end{pmatrix}
        \begin{pmatrix}
                z_1z_2z_3-z_3-z_1 &1-z_1z_2\\z_2z_3-1 &-z_2
            \end{pmatrix}\ \text{is upper-triangular}.\right\}\\
            &=\{(z_1,z_2,z_3)\in\C^3:z_1z_2z_3-z_3-z_1=0\}
            \cong\{(z_1,z_2)\in\C^2:z_1z_2-1\neq0\}
            \end{align*}
\end{example}

There is an inductive relationship between $X(\sigma^k)$ and
$X(\sigma^{k-1})$, we explore this concept further by first establishing 
general formulas for the braid matrices then extending these results to the polynomials defining  the braid varieties. Moreover, with these results we show that the braid variety $X(\sigma^k)$ is smooth.

\begin{lemma}[Hughes\cite{H}, Chantraine-Ng-Sivek\cite{CNS}]
\label{lem: braid mat}
    One can express the braid matrix for $\beta=\sigma^k$ as
    $$B_\beta(z_1,\ldots,z_k)=\begin{pmatrix}
        F_k(z_1,\dots,z_k)&-F_{k-1}(z_1,\dots,z_{k-1})\\F_{k-1}(z_2,\dots,z_k)&-F_{k-2}(z_2,\dots,z_{k-1})
    \end{pmatrix}$$
    where 
    \begin{equation}
    \label{eq: def F.}
    F_k(z_i,\dots,z_{i+k})=z_{i+k}F_{k-1}(z_i,\dots,z_{i+k-1})-F_{k-2}(z_i,\dots,z_{i+k-2})
    \end{equation}
    with initial values $F_1(z_i)=z_i$, $F_0\equiv1$ and $F_{-1}\equiv0$. 
\end{lemma}
\begin{proof}
    We proceed with induction on $k$. Clearly, $$B_{\sigma^1}(z_1)=\begin{pmatrix}
        z_1&-1\\1&0
    \end{pmatrix}=\begin{pmatrix}
        F_1(z_1)&-F_0(\emptyset)\\F_0(\emptyset)&-F_{-1}(\emptyset)
    \end{pmatrix}$$
    Suppose 
    $$
    B_{\sigma^k}(z_1,\ldots,z_k)=\begin{pmatrix}
        F_k(z_1,\dots,z_k)&-F_{k-1}(z_1,\dots,z_{k-1})\\F_{k-1}(z_2,\dots,z_k)&-F_{k-2}(z_2,\dots,z_{k-1})
    \end{pmatrix}
    $$
    Then
    \begin{align*}
        B_{\sigma^{k+1}}(z_1,\ldots,z_k,z_{k+1})&=B_{\sigma^{k}}(z_1,\ldots,z_k)B_\sigma(z_{k+1})\\
        &=\begin{pmatrix}
        F_k(z_1,\dots,z_k)&-F_{k-1}(z_1,\dots,z_{k-1})\\F_{k-1}(z_2,\dots,z_k)&-F_{k-2}(z_2,\dots,z_{k-1})
    \end{pmatrix}\begin{pmatrix}
        z_{k+1} &-1\\1&0
    \end{pmatrix}\\
    &=\begin{pmatrix}
        z_{k+1}F_k(z_1,\dots,z_k)-F_{k-1}(z_1,\dots,z_{k-1}&-F_{k-1}(z_1,\dots,z_{k-1})\\z_{k+1}F_{k-1}(z_2,\dots,z_k)-F_{k-2}(z_2,\dots,z_k)&-F_{k-1}(z_2,\dots,z_k)
    \end{pmatrix}\\
    &=\begin{pmatrix}
        F_{k+1}(z_1,\dots,z_{k+1})&-F_k(z_1,\dots,z_k)\\F_{k}(z_2,\dots,z_k)&-F_{k-1}(z_2,\dots,z_k)
    \end{pmatrix}
    \end{align*}    
\end{proof}

\begin{theorem}[Hughes \cite{H}]
The braid variety $X(\sigma^k)$ is defined in $\C^k$ by the equation $F_k(z_1,\ldots,z_k)=0$ where $F_k$ is given by the recursion \eqref{eq: def F.}. 

Moreover, if $F_k(z_1,\dots,z_k)=0$, then $F_{k-1}(z_1,\dots,z_{k-1})\neq0$ and $z_k=\dfrac{F_{k-2}(z_1,\dots,z_{k-2})}{F_{k-1}(z_1,\dots,z_k)}$.
\end{theorem}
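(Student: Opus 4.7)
The plan is to leverage Lemma~\ref{lem: braid mat} directly: since that lemma already writes $B_\beta(z_1,\ldots,z_k)$ as a $2\times 2$ matrix with entries $\pm F_j$, the first statement reduces to one matrix product and an inspection of entries.

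First I would left-multiply the formula from Lemma~\ref{lem: braid mat} by $\left(\begin{smallmatrix}0 & -1\\ 1 & 0\end{smallmatrix}\right)$. The product is a $2\times 2$ matrix whose bottom-left entry is exactly $F_k(z_1,\dots,z_k)$, while the other three entries are generically nonzero polynomials. Upper-triangularity is therefore equivalent to the single equation $F_k(z_1,\dots,z_k)=0$, which identifies $X(\sigma^k)$ with the zero locus of $F_k$ in $\C^k$ and completes the first part.

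For the ``moreover'' clause, I would exploit the fact that $\det B(z) = 1$, and hence $\det B_\beta = 1$. Expanding this determinant using the explicit expression from Lemma~\ref{lem: braid mat} yields the polynomial identity
\[
F_{k-1}(z_1,\dots,z_{k-1})\,F_{k-1}(z_2,\dots,z_k) \;-\; F_k(z_1,\dots,z_k)\,F_{k-2}(z_2,\dots,z_{k-1}) \;=\; 1.
\]
On the subvariety $\{F_k=0\}$ this collapses to $F_{k-1}(z_1,\dots,z_{k-1})\,F_{k-1}(z_2,\dots,z_k) = 1$, which in particular forces $F_{k-1}(z_1,\dots,z_{k-1}) \neq 0$. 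Finally, applying the $i=1$ case of the recursion~\eqref{eq: def F}, namely $F_k = z_k\,F_{k-1}(z_1,\dots,z_{k-1}) - F_{k-2}(z_1,\dots,z_{k-2})$, and dividing by the nonvanishing factor $F_{k-1}(z_1,\dots,z_{k-1})$ produces the explicit formula for $z_k$.

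There is essentially no obstacle in this plan---the real work has already been done in Lemma~\ref{lem: braid mat}. The one conceptual point worth flagging is the use of the determinant identity to force the nonvanishing of $F_{k-1}$: without it one might be tempted to attempt a clumsier induction on $k$, whereas the single observation $\det B_\beta = 1$ does the job in one line.
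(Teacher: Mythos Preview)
Your argument is correct. The first part matches the paper's proof verbatim. For the nonvanishing of $F_{k-1}(z_1,\dots,z_{k-1})$, however, you take a different route: the paper argues by downward induction (if $F_k=0$ and $F_{k-1}=0$ then the recursion forces $F_{k-2}=0$, hence all $F_j=0$, contradicting $F_0=1$), whereas you invoke the determinant identity $\det B_\beta=1$. Your approach is cleaner and in fact the paper itself uses exactly this identity later, as equation~\eqref{eq: det} in the proof of Lemma~\ref{thm: iso}, to reach the same conclusion; so you have simply anticipated that step. The inductive argument has the minor advantage of staying entirely within the recursion~\eqref{eq: def F} without appealing to the matrix interpretation, but the determinant trick is a one-liner and arguably the more natural thing to do once Lemma~\ref{lem: braid mat} is in hand.
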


\begin{proof}
By Lemma \ref{lem: braid mat}, we express the braid matrix as $$B_\beta(z_1,\ldots,z_k)=\begin{pmatrix}
        F_k(z_1,\dots,z_k)&-F_{k-1}(z_1,\dots,z_{k-1})\\F_{k-1}(z_2,\dots,z_k)&-F_{k-2}(z_2,\dots,z_{k-1})
    \end{pmatrix}$$
    Using the definition for a braid variety, we find that 
    \begin{align*}
    X(\sigma^k)&=\left\{(z_1,
    \ldots,z_k) :\ \begin{pmatrix}
        0&-1\\1&0
    \end{pmatrix}B_\beta(z_1,\dots,z_k) \text{ is upper-triangular}\right\}\\
    &=\{(z_1,\dots,z_k)\in\C^k:F_k(z_1,\dots,z_k)=0\} 
    \end{align*}

    Given that $F_n(z_1,\dots,z_k)=0$ and $F_k=z_kF_{k-1}-F_{k-2}$. If $F_{k-1}\neq0$, then we can solve the equation $F_k=0$ for $z_k$:
   $$  
        F_k=z_kF_{k-1}-F_{k-2}=0,\ 
        z_k=\frac{F_{k-2}}{F_{k-1}}.
    $$
        Suppose instead that $F_{k-1}(z_1,\dots,z_{k-1})=0$ and given that $F_k(z_1,\dots,z_k)=0$ by the definition of $X(\sigma^k)$, then $F_{k-2}(z_1,\dots,z_{k-2})=0$. By proceeding with downward induction on $k$, we conclude that $F_k(z_1,\dots,z_k)=0$ for all $k$, contradicting $F_0=1$. Therefore, $F_{k-1}(z_1,\dots,z_{k-1})\neq0$.

\end{proof}

\begin{corollary}
\label{cor: f  minus 1}
    We have $X(\sigma^k)\simeq \{(z_1,\dots,z_{k-1}):F_{k-1}(z_1,\dots,z_{k-1})\neq0\}.$
\end{corollary}

\begin{corollary}
    \label{cor: variety smooth}
    The braid variety $X(\sigma^k)$ is smooth of complex dimension $k-1$.
\end{corollary}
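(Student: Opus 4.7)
The plan is to leverage Corollary \ref{cor: f minus 1} directly: since that corollary already exhibits $X(\sigma^k)$ as isomorphic to the Zariski open subset $\{F_{k-1}(z_1,\dots,z_{k-1})\neq 0\}$ of $\C^{k-1}$, smoothness and the dimension count are immediate. Any nonempty Zariski open subset of an affine space $\C^{k-1}$ is itself a smooth algebraic variety of the same dimension, so there is essentially nothing to prove beyond citing the previous corollary and observing that $\{F_{k-1}\neq 0\}$ is nonempty (which follows from $F_0=1$, i.e., $F_{k-1}$ is not the zero polynomial, together with an easy induction showing $F_{k-1}\not\equiv 0$).

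Alternatively, one can give a direct Jacobian argument without invoking the previous corollary. The variety $X(\sigma^k)\subset\C^k$ is cut out by the single equation $F_k(z_1,\dots,z_k)=0$, and from the recursion \eqref{eq: def F} one has
\[
\frac{\partial F_k}{\partial z_k}=F_{k-1}(z_1,\dots,z_{k-1}).
\]
By the preceding theorem, $F_{k-1}(z_1,\dots,z_{k-1})\neq 0$ at every point of $X(\sigma^k)$, so the gradient of $F_k$ is nowhere vanishing on $X(\sigma^k)$. The implicit function theorem (in its algebraic form) then shows that $X(\sigma^k)$ is a smooth hypersurface in $\C^k$, hence of complex dimension $k-1$.

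I would present the first argument, since it is the cleanest: one line citing Corollary \ref{cor: f minus 1}, one sentence noting that an open subvariety of $\C^{k-1}$ is smooth of dimension $k-1$. There is no real obstacle here; the only thing to be slightly careful about is making sure $\{F_{k-1}\neq 0\}$ is nonempty so that the dimension is indeed $k-1$ rather than $-\infty$, but this is guaranteed by $F_0=1\neq 0$ and inductively by the leading-term structure of the recursion (the leading monomial of $F_{k-1}$ in the $z_i$'s is $z_1 z_2 \cdots z_{k-1}$, which is nonzero).
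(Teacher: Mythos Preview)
Your first argument is exactly the paper's proof: cite Corollary~\ref{cor: f  minus 1} to identify $X(\sigma^k)$ with the open locus $\{F_{k-1}\neq 0\}\subset\C^{k-1}$, then observe that an open subset of affine space is smooth of the ambient dimension. Your additional care about nonemptiness and the alternative Jacobian argument are fine extras, but the core approach matches the paper.
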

\begin{proof}
    By Corollary \ref{cor: f  minus 1}, $X(\sigma^k)=\{(z_1,\dots,z_{k-1}):F_{k-1}(z_1,\dots,z_{k-1})\neq0\}$. Since $\{(z_1,\dots,z_{k-1}):F_{k-1}(z_1,\dots,z_{k-1})\neq0\}$ is an open subset in $\C^{k-1}$, then $X(\sigma^k)$ is a smooth manifold.
\end{proof}

\subsection{Cohomology using Alexander and Poincar\'e duality}

Given the inductive definition of the two strand braid variety $X(\beta)$ we may determine the homology in terms of the vector space with Alexander and Poincar\'e duality. Our varieties are non-compact, so we have to be careful and sometimes use cohomology with compact support.

\begin{theorem}
\label{thm: poincare}
    (Poincar\'e Duality) If $M$ is an orientable $n$-manifold then we have an isomorphism
    $
    \widetilde{H}^k_{c}(M;\C)\simeq  \widetilde{H}_{n-k}(M;\C)
    $    for all $k$.

\end{theorem}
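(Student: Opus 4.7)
This is a classical statement and the natural plan is to invoke the standard proof of Poincar\'e duality in its compactly supported form; since the paper is self-contained in its applications, I would write a sketch rather than a full proof and refer the reader to Hatcher for details.

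The plan is to construct the duality map as cap product with the fundamental class. Since $M$ is orientable, for every compact $K\subset M$ there is a distinguished class $\mu_K\in H_n(M,M\setminus K;\C)$ restricting to the local orientation at each point of $K$. Passing to the colimit over compact subsets gives a fundamental class in Borel--Moore homology $[M]\in H_n^{BM}(M;\C)$. Cap product with this class defines, for each $k$, a homomorphism
\[
D_M\colon H^k_c(M;\C)\longrightarrow H_{n-k}(M;\C),\qquad \alpha\longmapsto [M]\frown \alpha,
\]
and the theorem is the assertion that $D_M$ is an isomorphism.

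First I would verify the base case $M=\R^n$ by direct computation: both sides vanish except in the matching degree, where $H^n_c(\R^n;\C)\simeq\C\simeq H_0(\R^n;\C)$, and the cap product pairing is easily seen to be nondegenerate. Next, I would establish the inductive step via Mayer--Vietoris. For open sets $U,V\subset M$ there are Mayer--Vietoris sequences in both compactly supported cohomology and ordinary homology, and the cap product with the (restricted) fundamental classes gives a commutative ladder between them. If $D_U$, $D_V$, and $D_{U\cap V}$ are isomorphisms, the five lemma implies $D_{U\cup V}$ is an isomorphism. A similar (and easier) colimit argument handles disjoint unions and nested unions $M=\bigcup U_\alpha$. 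Using these two inputs one first treats any open subset of $\R^n$ (induction on the number of convex open sets in a cover), and then a general manifold by choosing a good cover by Euclidean charts and exhausting $M$ by finite unions of them.

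The main technical point to be careful about is the colimit step: compactly supported cohomology is covariant for open inclusions while ordinary homology is covariant for proper maps, so one must check that the cap product maps assemble into a map of directed systems compatibly, and that filtered colimits commute with the functors in question (which they do, since all constructions are defined at the chain level). For a non-finite-type manifold, this colimit argument is what allows the reduction to the good-cover case. Once this is in place, the theorem follows formally, and the notation $\widetilde H$ for reduced (co)homology is harmless since the reduction only removes the class of a point in degree $0$ on both sides and the duality respects this.
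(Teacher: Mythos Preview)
Your sketch is correct and follows the standard argument (cap product with the fundamental class, then a Mayer--Vietoris/colimit induction as in Hatcher). Note, however, that the paper does not actually prove this theorem: it is stated as a classical result and simply invoked, so there is no ``paper's own proof'' to compare against. Your write-up is more than what the paper provides, and would be appropriate as a reference sketch if one were desired.
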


\begin{theorem}
\label{thm: alex}
    (Alexander Duality) If $K$ is a locally contractible, nonempty, proper subspace of $\mathbb{R}^n$, then $\widetilde{H}_i(\mathbb{R}^n-K;\C)\simeq \widetilde{H}^{n-i-1}_c(K;\C)$ for all $i$.
\end{theorem}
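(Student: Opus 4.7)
The plan is to derive Alexander duality from Poincar\'e duality (Theorem \ref{thm: poincare}) combined with the long exact sequence in compactly supported cohomology attached to the open/closed decomposition $\mathbb{R}^n = U \sqcup K$, where $U := \mathbb{R}^n \setminus K$. Throughout, one works with the standard hypothesis that $K$ is closed in $\mathbb{R}^n$, which is the setting where the duality is usually stated; local contractibility of $K$ is what will allow us to apply singular (rather than sheaf) compactly supported cohomology in the sequence below.

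First, since $U$ is an open subset of $\mathbb{R}^n$ it is an orientable $n$-manifold, so Poincar\'e duality yields
$$\widetilde{H}_i(U;\C) \;\simeq\; \widetilde{H}^{n-i}_c(U;\C).$$
Second, for the closed subset $K \subset \mathbb{R}^n$ with open complement $U$, the standard long exact sequence in compactly supported cohomology reads
$$\cdots \to H^{k-1}_c(K;\C) \to H^k_c(U;\C) \to H^k_c(\mathbb{R}^n;\C) \to H^k_c(K;\C) \to \cdots$$
Combined with $H^k_c(\mathbb{R}^n;\C) = \C$ for $k=n$ and $0$ otherwise, this produces a connecting isomorphism $H^{k-1}_c(K;\C) \simeq H^k_c(U;\C)$ whenever both $k$ and $k-1$ are $\ne n$. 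Setting $k = n-i$ gives $H^{n-i-1}_c(K;\C) \simeq H^{n-i}_c(U;\C)$, and chaining this with the Poincar\'e duality isomorphism from the first step produces the claimed Alexander duality.

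The main technical obstacles I would anticipate are, first, justifying the above long exact sequence in \emph{singular} compactly supported cohomology: this is where the local contractibility hypothesis enters, since it ensures (via an ANR/neighborhood-retract argument, or equivalently a Čech-to-singular comparison) that singular compactly supported cohomology of $K$ agrees with the sheaf-theoretic version for which the excision-type sequence holds formally. Second, the endpoint indices require careful bookkeeping: the top class $H^n_c(\mathbb{R}^n;\C) = \C$ and the bottom constants $H^0_c(K;\C)$ must be absorbed by the tildes on the reduced groups, which is precisely why the statement is phrased with $\widetilde{H}$ on both sides and with a dimension shift of $n-i-1$. Once those two points are handled, the proof is essentially a diagram chase through the long exact sequence.
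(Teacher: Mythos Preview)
The paper does not actually prove this statement: Theorem~\ref{thm: alex} is simply stated as the classical Alexander duality theorem, with no proof given, and is then invoked as a tool in the proof of Theorem~\ref{thm: homology}. So there is nothing to compare your proposal against.

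That said, your outline is the standard derivation of Alexander duality from Poincar\'e duality on the open complement $U=\mathbb{R}^n\setminus K$ together with the long exact sequence in compactly supported cohomology for the pair $(U,K)$ inside $\mathbb{R}^n$, and it is essentially correct. Your identification of where local contractibility is needed (to pass between sheaf and singular compactly supported cohomology of $K$) and where the reduced tildes absorb the top class of $\mathbb{R}^n$ is accurate. One small point worth tightening: you should also check the boundary case $i=0$ (equivalently $k=n$), where the sequence does not immediately give an isomorphism because $H^n_c(\mathbb{R}^n;\C)\neq 0$; here the properness hypothesis on $K$ ensures $U\neq\emptyset$, so the map $H^n_c(U;\C)\to H^n_c(\mathbb{R}^n;\C)$ is surjective and the reduced statement follows.
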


The cohomology of two-strand braid varieties was computed in \cite[Section 6.2, Proposition 9.13]{LS} using cluster algebra methods (compare with Theorem \ref{thm: alg forms} below). Here we give a simpler inductive proof using Poincar\'e and Alexander dualities.

\begin{theorem}
\label{thm: homology}
    Let $\beta=\sigma^n$, then the homology of the two-strand braid variety is given by: $$H^i(X(\beta))=\begin{cases}
        \C &\text{for }0\leq i\leq n-1\\ 0 &\text{otherwise}.
    \end{cases}$$
\end{theorem}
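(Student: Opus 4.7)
The plan is to induct on $n$, exploiting the identification $X(\sigma^n)\cong \C^{n-1}\setminus X(\sigma^{n-1})$ supplied by Corollary \ref{cor: f  minus 1}. The base case $n=1$ is immediate, since $X(\sigma^1)$ is a single point. For the inductive step, I would use that $X(\sigma^{n-1})$ sits inside $\C^{n-1}\cong \mathbb{R}^{2n-2}$ as a closed, nonempty, locally contractible (in fact smooth, by Corollary \ref{cor: variety smooth}) proper subvariety of real dimension $2(n-2)$, which puts us squarely in the setting of Theorems \ref{thm: alex} and \ref{thm: poincare}.

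The argument then boils down to a single chain of two dualities. Alexander duality yields
\[
H_i(X(\sigma^n))\;\cong\;H_c^{\,2n-3-i}(X(\sigma^{n-1})),
\]
and Poincar\'e duality on $X(\sigma^{n-1})$, which has real dimension $2n-4$, rewrites the right-hand side as $H_{i-1}(X(\sigma^{n-1}))$. Passing to cohomology via universal coefficients (harmless since we work over $\C$ and all groups are finite-dimensional) produces the degree-shift recursion
\[
H^i(X(\sigma^n))\;\cong\; H^{i-1}(X(\sigma^{n-1}))\qquad\text{for }i\ge 1,
\]
while $H^0(X(\sigma^n))=\C$ follows independently from connectedness: the complement in $\C^{n-1}$ of a real codimension-$2$ subvariety is connected. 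Feeding in the inductive hypothesis that $H^j(X(\sigma^{n-1}))=\C$ for $0\le j\le n-2$ and vanishes otherwise then propagates the claimed Betti pattern one step upward, and the induction closes.

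The main obstacle, and really the only delicate point, is bookkeeping with reduced versus unreduced (co)homology and compactly-supported versus ordinary cohomology across the chain of dualities. Concretely, $H^0(X(\sigma^n))$ must be supplied by hand from connectedness rather than being read off the shift formula (which naturally outputs only the reduced groups), and one has to verify non-compactness of $X(\sigma^{n-1})$ so that the compactly-supported form of Alexander duality is the correct version to apply. Both checks are routine: $X(\sigma^{n-1})$ is a positive-dimensional smooth affine variety hence non-compact (for $n\ge 3$; the case $n=2$ is visible directly from $X(\sigma^2)\cong\C^*$), and connectedness of $X(\sigma^n)$ is automatic once $n\ge 2$.
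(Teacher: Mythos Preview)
Your proposal is correct and follows essentially the same route as the paper: induction on $n$ combined with the identification $X(\sigma^n)\cong\{F_{n-1}\neq 0\}\subset\C^{n-1}$ from Corollary~\ref{cor: f  minus 1}, Alexander duality (Theorem~\ref{thm: alex}) to pass to $H_c^*$ of the hypersurface $\{F_{n-1}=0\}=X(\sigma^{n-1})$, and Poincar\'e duality (Theorem~\ref{thm: poincare}) to obtain the shift $\widetilde H_i(X(\sigma^n))\cong\widetilde H_{i-1}(X(\sigma^{n-1}))$. The only cosmetic differences are that the paper starts the induction at $n=2$ (using $X(\sigma^2)\cong\C^*$) rather than $n=1$, and works throughout in reduced homology so that the $i=0$ case and connectedness are absorbed into the duality chain rather than handled separately; your extra care with reduced versus unreduced groups and the explicit appeal to universal coefficients are reasonable but not strictly needed beyond what the paper already uses.
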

\begin{proof}
We proceed by induction on $n$. Given Corollary \ref{cor: f  minus 1}, then $$H^i(X(\sigma^2))=H^i(\{z_1z_2-1=0\})=H^i(\{z_1\neq 0\})=H^i(\C^*)$$ Since $H^i(\C^*)=\C$ for $i=0,1$, then the theorem is true for $n=2$. Supposing the statement holds for $n=k$ we determine that
\begin{align*}
    \widetilde{H}_i(X(\sigma^{k+1}))&=\widetilde{H}_i(\{F_{k+1}=0\}) 
    =\widetilde{H}_i(\{F_k\neq0\})\ (\mathrm{by\ Corollary}\ \ref{cor: f  minus 1}) \\
    &=\widetilde{H}^{2k-1-i}_c(\{F_k=0\})\quad (\mathrm{by\ Theorem}\ \ref{thm: alex})
    =\widetilde{H}^{2k-1-i}_c(X(\sigma^k))\\
    &=\widetilde{H}_{2k-2-(2k-1-i)}(X(\sigma^k))\  (\mathrm{by\  Theorem}\ \ref{thm: poincare})
    =\widetilde{H}_{i-1}(X(\sigma^k)).
\end{align*}
Since $\widetilde{H}_i(X(\sigma^{k+1}))=\begin{cases}
    \C &1\leq i\leq k+1\\ 0 &\text{otherwise}
\end{cases}$, we obtain $H_i(X(\sigma^{k+1}))=\begin{cases}
    \C &0\leq i\leq k+1\\
    0 &\text{otherwise}.
\end{cases}$
\end{proof}

\subsection{The Grassmannian and the open positroid variety}

The \textit{Grassmannian} $\Gr(k,n)$ parametrizes all $k$-dimensional linear subspaces of the $n$-dimensional space, presented as the row span of a $k\times n$ matrix of maximal rank. Let $v_1,\ldots,v_n$ be the columns of the matrix where $v_i$ are $k$-dimensional vectors. Given $I\in\binom{[n]}{k}$ the \textit{Pl\"ucker coordinate} $\Delta_I(A)$ is the minor of $k\times k$ submatrix of $A$ in column set $I$.
    
\begin{definition}[\cite{KLS}]
\label{def: variety}
    The open positroid variety $\Pi_{k,n}$ is defined as the set of elements in the Grassmannian such that there is a representative $k\times n$-matrix such that all cyclically consecutive $k\times k$ minors don't vanish\[\Delta_{i,\ldots,i+k-1}=\det(v_i,\ldots,v_{i+k-1})\neq0\]
    This condition does not depend on the representative, so the positroid is well-defined.
\end{definition}

\begin{definition}
\label{def: variety subset}
    Let $\Pi_{2,n}^{\circ,1}$ be the subset of the the open positroid variety such that each $\Delta_{i,i+1}=1$ for all $1\leq i\le n-1$ and $\Delta_{1,n}\neq0$.
\end{definition}

\begin{lemma}
\label{lem: v to z}
Suppose that $v_1,\ldots,v_{k+1}$ is a collection of vectors in $\C^2$ such that $v_1=(1,0)$ and $\det(v_i,v_{i+1})=1$. Then there exists a unique collection of parameters $z_1,\ldots,z_k$ such that $B(z_1)\cdots B(z_i)=(v_{i+1}\quad -v_i)$ for all $i$.
\end{lemma}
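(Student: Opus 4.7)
The plan is to prove the lemma by induction on $i$, constructing the $z_i$ one at a time. Since $B(z)$ acts on the right by a simple column-operation formula, each step reduces to a linear-algebra problem that is governed precisely by the determinant hypothesis $\det(v_i,v_{i+1})=1$.

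For the base case $i=1$, I would simply write out the equation $B(z_1)=(v_2\ -v_1)$. Because $v_1=(1,0)^T$, the second column $-v_1=(-1,0)^T$ automatically matches the second column of $B(z_1)$. The condition $\det(v_1,v_2)=1$ forces $v_2$ to be of the form $(a,1)^T$, so setting $z_1=a$ uniquely solves the base case.

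For the inductive step, assume $B(z_1)\cdots B(z_{i-1}) = (v_i\ -v_{i-1})$. A direct right-multiplication by $B(z_i)$ gives
\begin{equation*}
(v_i\ -v_{i-1})\,B(z_i) = (z_i v_i - v_{i-1}\ \ -v_i),
\end{equation*}
so the second column is already $-v_i$ for free, and I only need to solve $v_{i+1}=z_i v_i - v_{i-1}$ for $z_i$. Here is where the determinant hypothesis enters: since $\det(v_{i-1},v_i)=1$, the pair $(v_{i-1},v_i)$ is a basis of $\C^2$, so there exist unique scalars $\alpha,\beta$ with $v_{i+1}=\alpha v_{i-1}+\beta v_i$. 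Taking $\det(v_i,\cdot)$ of both sides yields $1=\det(v_i,v_{i+1})=\alpha\det(v_i,v_{i-1})=-\alpha$, hence $\alpha=-1$. Therefore $v_{i+1}=\beta v_i - v_{i-1}$, and $z_i:=\beta$ is the unique scalar making the inductive step work.

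I do not anticipate a real obstacle: the crucial ingredients — that $-v_i$ appears automatically as the second column of the new product, and that $\alpha$ is pinned to $-1$ by the normalization $\det(v_i,v_{i+1})=1$ — both fall out of very short computations. The only thing to double-check is uniqueness of $z_i$, which follows because $v_i\neq 0$ (again from $\det(v_{i-1},v_i)=1$), so the equation $z_i v_i = v_{i+1}+v_{i-1}$ has at most one solution. This also makes transparent the connection with the recursion \eqref{eq: def F}: the inductive relation $v_{i+1}=z_i v_i - v_{i-1}$ is exactly the three-term recursion that governs the polynomials $F_k$, which is a consistency check that the $z_i$ produced here are the same variables parametrizing $X(\sigma^k)$.
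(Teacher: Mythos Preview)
Your proof is correct and follows essentially the same approach as the paper: induction on $i$, using that $(v_{i-1},v_i)$ is a basis to write $v_{i+1}=\alpha v_{i-1}+\beta v_i$, then computing $\alpha=-1$ from $\det(v_i,v_{i+1})=1$ and setting $z_i=\beta$. Your version is slightly more explicit about the matrix multiplication and about uniqueness, but the argument is the same.
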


\begin{proof}
Let $v_i=(v_i^{1},v_i^{2})$, we prove the statement by induction in $i$.
For $i=1$ we have $v_1=(1,0)$ and $v_2=(z,1)$ since $\det(v_i,v_{i+1})=1$. For $i>1$ the vectors $v_{i-1},v_i$ form a basis of $\C^2$, so
we can write $v_{i+1}=\alpha v_{i-1}+\beta v_{i}$. Now
$$
\det(v_i,v_{i+1})=\alpha \det(v_i,v_{i-1})+\beta\det(v_i,v_i)=-\alpha \det(v_{i-1},v_i)+0=-\alpha
$$
so $\alpha=-1$ and we can denote $z_i=\beta$ and write 
\begin{equation}
\label{eq: v recursion}
v_{i+1}=-v_{i-1}+z_i v_i.
\end{equation}
Now 
$$
\left(
\begin{matrix}
v_{i+1}^{1} \quad -v_{i}^{1}\\
v_{i+1}^{2} \quad -v_{i}^{2}
\end{matrix}
\right)=
\left(
\begin{matrix}
v_{i}^{1} \quad -v_{i-1}^{1}\\
v_{i}^{2} \quad -v_{i-1}^{2}
\end{matrix}
\right) \left(
\begin{matrix}
z_i & -1\\
1 & 0\\
\end{matrix}
\right)
$$
and by assumption of induction we have
$$
B(z_1)\cdots B(z_{i-1})=\left(
\begin{matrix}
v_{i}^{1} \quad -v_{i-1}^{1}\\
v_{i}^{2} \quad -v_{i-1}^{2}
\end{matrix}
\right).
$$
\end{proof}

\begin{remark}
Note that $B(z_1)\cdots B(z_i)\left(\begin{matrix}1\\ 0\end{matrix}\right)=v_{i+1}$.
\end{remark}

\begin{lemma}
\label{thm: iso}
Let $\Pi_{2,k+1}^o$ and $\Pi_{2,k+1}^{o,1}$ be as described in \ref{def: variety subset} and \ref{def: variety}, then 
\begin{enumerate}[label=\alph*)]
    \item $\Pi_{2,k+1}^{\circ,1}$ is isomorphic to $X(\sigma^k)$.
    \item $\Pi_{2,k+1}^{\circ}$ is isomorphic to $X(\sigma^k)\times (\C^*)^k$.
\end{enumerate}
\end{lemma}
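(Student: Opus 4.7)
The strategy is to promote Lemma~\ref{lem: v to z} to a bijection by tightening the normalization on the Grassmannian side, with part (b) recovered from part (a) by tracking the consecutive minors as a $(\C^*)^k$-worth of scaling data.

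For part (a), the forward map $X(\sigma^k)\to\Pi_{2,k+1}^{\circ,1}$ sends $(z_1,\dots,z_k)$ to the $GL_2$-class of the matrix $(v_1|\cdots|v_{k+1})$ built by $v_1=(1,0)$, $v_2=(z_1,1)$, and $v_{i+1}=z_iv_i-v_{i-1}$ for $i\ge 2$, which automatically enforces $\Delta_{i,i+1}=1$ for $i=1,\dots,k$. By Lemma~\ref{lem: braid mat} the vector $v_{k+1}$ is the first column of $B(z_1)\cdots B(z_k)$, with entries $\bigl(F_k,\,F_{k-1}(z_2,\dots,z_k)\bigr)$; the top entry vanishes on $X(\sigma^k)$, so $v_{k+1}$ lies on the $(0,1)$-axis, and $\Delta_{1,k+1}=F_{k-1}(z_2,\dots,z_k)$. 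This is nonzero on $X(\sigma^k)$ because the identity $\det\bigl(B(z_1)\cdots B(z_k)\bigr)=1$ specializes to $F_{k-1}(z_1,\dots,z_{k-1})\,F_{k-1}(z_2,\dots,z_k)=1$ whenever $F_k=0$, so the image lies in $\Pi_{2,k+1}^{\circ,1}$. For the inverse, given a class in $\Pi_{2,k+1}^{\circ,1}$, pick a representative with all consecutive minors equal to $1$; the subgroup of $GL_2$ preserving this condition is $SL_2$. Use $SL_2$ to set $v_1=(1,0)$, leaving a residual unipotent $\mathbb{G}_a\subset SL_2$. Because $\Delta_{1,k+1}\ne 0$ forces the second entry of $v_{k+1}$ to be nonzero, this $\mathbb{G}_a$ uniquely translates $v_{k+1}$ onto the $(0,1)$-axis. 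Lemma~\ref{lem: v to z} then extracts unique coordinates $(z_1,\dots,z_k)$, and the axis condition is precisely $F_k=0$, so $(z_1,\dots,z_k)\in X(\sigma^k)$. Both maps are regular (the $GL_2$-normalization involves division only by $\Delta_{1,k+1}$, which is nonvanishing on the source), so they are mutually inverse isomorphisms of affine varieties.

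For part (b), extend the map by allowing the $k$ consecutive minors to take arbitrary values in $\C^*$: given $\bigl((z_1,\dots,z_k),(\lambda_1,\dots,\lambda_k)\bigr)\in X(\sigma^k)\times(\C^*)^k$, rescale each vector from (a) by $\widetilde v_i=\mu_i v_i$ with $\mu_1=1$ and $\mu_{i+1}$ determined inductively from $\mu_i\mu_{i+1}=\lambda_i$, so that $\Delta_{i,i+1}(\widetilde v)=\lambda_i$. This gives a regular isomorphism onto $\Pi_{2,k+1}^\circ$, with inverse that records the consecutive minors $(\Delta_{1,2},\dots,\Delta_{k,k+1})$ of a chosen representative and then rescales to reduce to part (a). The main obstacle is verifying algebraicity and uniqueness of the $GL_2$-normalization in the inverse of (a); once that is in place, (b) follows formally from keeping track of the residual scaling freedom.
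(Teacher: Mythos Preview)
Your proposal is correct and follows essentially the same approach as the paper: both arguments normalize a representative matrix via row operations (your $SL_2$-then-$\mathbb{G}_a$ decomposition is exactly the paper's ``set $v_1=(1,0)$, then subtract $F_k/F_{k-1}(z_2,\dots,z_k)$ times the second row from the first''), invoke Lemma~\ref{lem: v to z} to extract the $z_i$, and use the determinant identity $F_{k-1}(z_1,\dots,z_{k-1})F_{k-1}(z_2,\dots,z_k)=1$ on $\{F_k=0\}$ to verify $\Delta_{1,k+1}\neq 0$; part (b) is handled in both by column rescaling with the consecutive minors as torus coordinates. Your phrasing of the normalization in terms of the stabilizer filtration is slightly cleaner than the paper's ad hoc row operations, but the content is identical.
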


\begin{proof}
a) We package the vectors $v_k$ in a $2\times (k+1)$ matrix $V$.
Since $\Delta_{1,2}=1$, we can use row operations  to make sure that the first column of $V$ is $(1,0)$, so we get
$$
V=\left(
\begin{matrix}
1 & v_2^1 & \cdots &v_{k+1}^1\\
0 & v_2^2 & \cdots &v_{k+1}^2.
\end{matrix}
\right)\in\Pi_{2,k+1}^{\circ,1}.
$$
By Lemma \ref{lem: v to z} we can uniquely find the variables $z_1,\ldots,z_k$ such that 
$$
V=\left(
\begin{matrix}
1 & F_1(z_1) & \cdots &F_k(z_1,\dots,z_k)\\
0 & F_0 & \cdots &F_{k-1}(z_2,\dots,z_k)
\end{matrix}
\right) 
$$
Note that $\det B_i(z)=1$, so $\det B_{\beta}(z_1,\ldots,z_i)=1$ for any braid $\beta$ and 
\begin{equation}
\label{eq: det}
F_i(z_1,\dots,z_i)F_{i}(z_2,\dots,z_{i+1})-F_{i+1}(z_1,\dots,z_{i+1})F_{i-1}(z_2,\dots,z_i)=1,
\end{equation}
so the matrix $V$ indeed satisfies $\Delta_{i,i+1}(V)=1$. The matrix $V$ belongs to $\Pi_{2,k}^{\circ,1}$ if and only if $F_{k-1}(z_2,\dots,z_k)\ne 0$. In this case, we can use row operations to ensure that $F_k(z_1,\dots,z_k)=0$ (we subtract from the first row $F_k(z_1,\dots,z_k)/F_{k-1}(z_2,\dots,z_k)$ times the second row).  

The braid variety $X(\sigma^k)$ is cut out by the equation $\{(z_1,\dots,z_k)\in\C^k:F_{k}(z_1,\dots,z_k)=0\}$, so we get a map from $\Pi_{2,k}^{\circ,1}$ to $X(\sigma^k)$. To construct the inverse, observe that $F_{k}(z_1,\dots,z_k)=0$ and \eqref{eq: det} implies that $$F_{k-1}(z_1,\dots,z_{k-1})F_{k-1}(z_1,\dots,z_k)=1,$$ so $F_{k-1}(z_2,\dots,z_k)\neq 0$.
Therefore $\Pi_{2,k}^{\circ,1}\simeq X(\sigma^k)$.

b) Similarly to the above, we can use row operations to ensure any matrix in $\Pi_{2,k+1}^{\circ}$ has the first column $(1,0)$. Now we define a map $\Pi_{2,k+1}^{\circ,1}\times (\C^*)^k\to \Pi_{2,k+1}^{\circ}$ by rescaling all other columns:
$$
\varphi:[(v_1,v_2,\ldots,v_{k+1}),(\lambda_1,\ldots,\lambda_k)]\mapsto 
(v_1,\lambda_1v_2,\ldots,\lambda_k v_{k+1}).
$$
The inverse map is clear, since we get 
$$
\det (\lambda_{i-1}v_i,\lambda_{i}v_{i+1})=\lambda_{i-1}\lambda_i,
$$
and the scalars $\lambda_i$ can recovered from the minors $\Delta_{i,i+1}$ for the image of $\varphi$.

\end{proof}

\begin{example}
We have 
$$
B(z_1)B(z_2)=\left(
\begin{matrix}
z_1 & -1\\
1 & 0\\
\end{matrix}
\right)
\left(
\begin{matrix}
z_2 & -1\\
1 & 0\\
\end{matrix}
\right)=
\left(
\begin{matrix}
z_1z_2-1 & -z_1\\
z_2 & -1\\
\end{matrix}
\right)
$$
$$
B(z_1)B(z_2)B(z_3)=
\left(
\begin{matrix}
z_1z_2-1 & -z_1\\
z_2 & -1\\
\end{matrix}
\right)\left(
\begin{matrix}
z_3 & -1\\
1 & 0\\
\end{matrix}
\right)=
\left(
\begin{matrix}
z_1z_2z_3-z_1-z_3 & 1-z_1z_2\\
z_2z_3-1 & -z_2\\
\end{matrix}
\right).
$$
This means that we can package $v_i$ in a matrix
$$
\left(v_1 \quad v_2 \quad v_3\quad v_4\right)=
\left(
\begin{matrix}
1 & z_1 & z_1z_2-1 & z_1z_2z_3-z_1-z_3 \\
0 & 1  & z_2 & z_2z_3-1.
\end{matrix}
\right)
$$
\end{example}

\subsection{Cluster algebras}
Cluster algebras are commutative rings that are not defined in the typical sense by generators and relations, instead it is defined by a seed $s$ which consists of a quiver, or exchange matrix, and cluster variables, which is a finite collection of algebraically independent elements of the algebra. This seed along with a concept of mutation generates a subring of a field $\mathcal{F}$. We refer to \cite{W} for more details on cluster algebras.

A cluster variety is an affine algebraic variety $X$ defined by a collection of open charts $U\simeq(\C^*)^d$ where each chart $U$ is parametrized by cluster coordinates $A_1,\ldots,A_d$ which are invertible on $U$ and extend to regular functions on $X$. These coordinates can be either mutable or frozen where the coordinate is frozen if it extends to a non-vanishing regular function on $X$. 

For each chart we assign a skew-symmetric integer matrix $\varepsilon_{ij}$ called the exchange matrix to a quiver $Q$ defined by 
$$
(\varepsilon_{ij})=\begin{cases}
    a &\text{if there are $a$ arrows from vertex $i$ to vertex $j$;}\\
    -a &\text{if there are $a$ arrows from vertex $j$ to vertex $i$;}\\
    0 &\text{otherwise}
\end{cases}$$
For each chart $U$ and each mutable variable $A_k$, there is another chart $U'$ with cluster coordinates $A_1,\ldots,A'_k,\ldots,A_d$ and a skew-symmetric matrix $\varepsilon'_{ij}$ related by mutation $\mu_k$, where the mutation is defined by 
\begin{equation}
\label{eq: mutation}
A_k'A_k=\left(\displaystyle\prod_{\varepsilon_{ki\geq0}}A_i^{\varepsilon_{ki}}+\prod_{\varepsilon_{ki\leq0}}A^{-\varepsilon_{ki}}_i\right)
\end{equation}
If $i\ne k$ then the cluster variables $A_i$ remain unchanged.

When performing a mutation, we modify the quiver using the following rules:
\begin{enumerate}
    \item If there is a path of the vertices $i\rightarrow k\rightarrow j$, then we add an arrow from $i$ to $j$.
    \item Any arrows incident to $k$ change orientation.
    \item Remove a maximal disjoint collection of 2-cycles produced in Steps (1) and (2).
\end{enumerate}

Any two charts in the cluster algebra are related by a sequence of mutations $\underline{\mu}$, and $\mu_k$ is an involution. Given these conditions the ring of functions on $X$ is generated by all cluster variables in all charts.

We will need the notion of exchange ratios defined as the ratio of two terms in \eqref{eq: mutation}:
$$
\widehat{y_i}=\frac{\prod_{\varepsilon_{ki\geq0}}A_i^{\varepsilon_{ki}}}{\prod_{\varepsilon_{ki\leq0}}A^{-\varepsilon_{ki}}_i}.
$$

Let $V$ be a rational affine algebraic variety with algebra of regular functions $\C[V]$ and field of rational functions $\C(V)$. 
\begin{definition}\cite{Fraser, FSB}
\label{def: quasi}
    Let $\Sigma$ and $\Sigma_0$ be seeds of rank $r$ in $\C(V)$. Let $Q, A_i, \hat{y_i}$ denote the  quiver, cluster variables and exchange ratios in $\Sigma$ and use primes to denote these quantities in $\Sigma_0$. We assume that $A_{r+1}, \ldots , A_d$ are frozen.
    Then $\Sigma$ and $\Sigma_0$ are quasi-equivalent, denoted $\Sigma\sim\Sigma_0$, if the following hold:
    \begin{itemize}
        \item The groups $\mathbf{P}, \mathbf{P}_0 \subset \C[V ]$ of Laurent monomials in frozen variables coincide. That is, each frozen variable $A'_i$ is a Laurent monomial in $\{A_{r+1}, \ldots, A_d\}$ and vice versa.
        \item Corresponding mutable variables coincide up to multiplication by an element of $\mathbf{P}$: for $i\in[r]$, there is a Laurent monomial $M_i\in\mathbf{P}$ such that $A_i = M_iA'_i\in \C(V)$.
        \item The exchange ratios (3) coincide: $\hat{y}_i = \hat{y}'_i$ for $i\in[r]$.
    \end{itemize}
    Quasi-equivalence is an equivalence relation on seeds.
Seeds $\Sigma,\Sigma_0$ are related by a quasi-cluster transformation if there exists a finite sequence
$\underline{\mu}$ of mutations such that $\underline{\mu}(\Sigma) \sim \Sigma_0$.
\end{definition}

By the main result of \cite{Fraser}, it is sufficient to check the conditions of quasi-equivalence in one cluster, and they will automatically hold in every other cluster.

\begin{theorem}[Scott\cite{S}, Galashin--Lam\cite{GL}, Serhiyenko--Sherman-Bennett--Williams\cite{SSBW}]
\label{thm: cstructure}
        Any open positroid variety has a cluster structure.
    \end{theorem}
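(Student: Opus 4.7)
The plan is to follow the strategy that originated with Scott for the Grassmannian case and was extended to arbitrary open positroid varieties by Galashin--Lam and Serhiyenko--Sherman-Bennett--Williams. First, to each open positroid variety $\Pi^\circ \subseteq \Gr(k,n)$ one associates a reduced plabic graph $G$ in a disk whose boundary strand permutation matches the decorated permutation labeling the positroid. The faces of $G$ are labeled by $k$-subsets of $[n]$, and the corresponding Pl\"ucker coordinates $\Delta_I$ serve as the cluster variables of an initial seed, with boundary faces giving the frozen variables. The quiver is read off directly from $G$ by placing arrows between adjacent faces around each trivalent internal vertex according to the standard orientation rule (white vertices inducing one orientation, black vertices the opposite).

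Next, I would verify that square moves on plabic graphs induce quiver mutations at mutable vertices, with the three-term Pl\"ucker relation $\Delta_{ac}\Delta_{bd} = \Delta_{ab}\Delta_{cd} + \Delta_{ad}\Delta_{bc}$ playing the role of the exchange relation. This amounts to checking how the four face labels around a square transform under the move, and matching the emerging label against the mutation formula. Combined with Postnikov's theorem that any two reduced plabic graphs with the same decorated permutation are connected by a sequence of square moves (and trivial contractions/uncontractions), this ensures that all seeds arising from plabic graphs lie in a single mutation class inside the ambient field of rational functions on $\Pi^\circ$.

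The main obstacle is to prove that the cluster algebra $\mathcal{A}$ generated from this initial seed coincides with the coordinate ring $\C[\Pi^\circ]$, rather than merely being contained in it. The standard tool is the Starfish Lemma (or one of its upper-bound variants in the GL/SSBW papers): one must verify that (a) every mutable cluster variable is irreducible and regular on $\Pi^\circ$, (b) distinct cluster variables obtained by a single mutation are coprime as regular functions, and (c) every regular function on $\Pi^\circ$ is expressible as a Laurent polynomial in some seed. Step (c) rests on the Laurent phenomenon of Fomin--Zelevinsky together with an analysis of the codimension-one boundary strata of $\Pi^\circ$, which must be identified with vanishing loci of the frozen Pl\"ucker coordinates. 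I expect step (b) to be the real difficulty: it requires showing that the two Pl\"ucker coordinates exchanged by a square move cut out genuinely distinct prime divisors, and this does not follow from the cluster formalism but needs the positroid combinatorics of matroid polytopes and target labels of plabic graphs, together with the irreducibility and normality of $\Pi^\circ$ established by Knutson--Lam--Speyer.
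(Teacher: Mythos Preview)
The paper does not actually prove this theorem: it is stated as a citation of results from Scott, Galashin--Lam, and Serhiyenko--Sherman-Bennett--Williams, and no proof is supplied in the text. Your proposal sketches (accurately, in broad strokes) the strategy used in those references, but there is nothing in the paper itself to compare it against. In the context of this paper the statement functions as background input, and the appropriate ``proof'' is simply a pointer to \cite{S,GL,SSBW}.
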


For positroid varieties $\Pi_{2,k+1}^{\circ}$ we obtain a cluster variety of type $A_{k-1}$ with $k+1$ frozen variables. We asssign the vectors $v_i$ from Lemma \ref{thm: iso} to the vertices of a regular polygon $\mathcal{P}$. The cluster charts in $\Pi_{2,k+1}^{\circ}$ are determined by  triangulations of $\mathcal{P}$. Given a triangulation, the edges between the vertices $i$ and $j$ correspond to cluster variables determined by the Pl\"ucker coordinates $\Delta_{i,j}=\det(v_i,v_j)$.

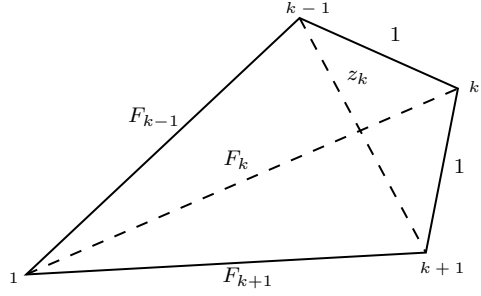
\begin{figure}
    \centering
    \tikzset{every picture/.style={line width=0.75pt}} 

\begin{tikzpicture}[x=0.75pt,y=0.75pt,yscale=-1,xscale=1]

\draw   (255.5,61.5) -- (334.5,97) -- (318.5,179.5) -- (119.09,190.52) -- (119.09,190.52) -- cycle ;
\draw  [dash pattern={on 4.5pt off 4.5pt}]  (119.09,190.52) -- (334.5,97) ;
\draw  [dash pattern={on 4.5pt off 4.5pt}]  (255.5,61.5) -- (318.5,179.5) ;

\draw (109,188) node [anchor=north west][inner sep=0.75pt]  [font=\tiny] [align=left] {$\displaystyle 1$};
\draw (338.5,91.5) node [anchor=north west][inner sep=0.75pt]  [font=\tiny] [align=left] {$\displaystyle k$};
\draw (314,182.5) node [anchor=north west][inner sep=0.75pt]  [font=\tiny] [align=left] {$\displaystyle k+1$};
\draw (247,52) node [anchor=north west][inner sep=0.75pt]  [font=\tiny] [align=left] {$\displaystyle k-1$};
\draw (331,131) node [anchor=north west][inner sep=0.75pt]  [font=\footnotesize] [align=left] {$\displaystyle 1$};
\draw (299,64.5) node [anchor=north west][inner sep=0.75pt]  [font=\footnotesize] [align=left] {$\displaystyle 1$};
\draw (168.5,105) node [anchor=north west][inner sep=0.75pt]  [font=\footnotesize] [align=left] {$\displaystyle F_{k-1}$};
\draw (215.5,185.5) node [anchor=north west][inner sep=0.75pt]  [font=\footnotesize] [align=left] {$\displaystyle F_{k+1}$};
\draw (216,126.5) node [anchor=north west][inner sep=0.75pt]  [font=\footnotesize] [align=left] {$\displaystyle F_{k}$};
\draw (277.5,86.5) node [anchor=north west][inner sep=0.75pt]  [font=\footnotesize] [align=left] {$\displaystyle z_{k}$};

\end{tikzpicture}
    \caption{Section of the triangulation of $\fan$, see Figure \ref{fig:Ufan}, between the vertices $1,k-1,k$ and $k+1$}
    \label{fig:plucker}
\end{figure}

\begin{lemma}[Hughes\cite{H}]
\label{lem: Delta from F}
In $\Pi_{2,k+1}^{\circ,1}$ for all $i<j$ we have
$$
\Delta_{i,j}=F_{j-i-1}(z_{i+1},\ldots,z_{j-1}).
$$
In particular, $\Delta_{i,i+2}=z_{i+1}$.
\end{lemma}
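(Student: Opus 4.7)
The plan is to prove the formula by induction on the difference $j-i$, with $i$ held fixed, exploiting the three-term recursion for the vectors $v_\ell$ already established in Lemma~\ref{lem: v to z}. Recall that equation \eqref{eq: v recursion} gives $v_{\ell+1} = -v_{\ell-1} + z_\ell v_\ell$ whenever $v_1,\ldots,v_{k+1}$ lie in $\Pi_{2,k+1}^{\circ,1}$. Since the definition of $F$ in \eqref{eq: def F} has exactly the same three-term shape, the induction should be essentially mechanical once the base cases are in place.

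For the base cases I would check $j=i+1$, where $\Delta_{i,i+1}=1=F_0(\emptyset)$ by the very definition of $\Pi_{2,k+1}^{\circ,1}$, and $j=i+2$, where
\[
\Delta_{i,i+2}=\det(v_i,v_{i+2})=\det(v_i,-v_{i-1\vphantom{)}}+z_{i+1}v_{i+1})\ \xrightarrow{\ v_{i+1}\to v_i\ }\ z_{i+1}\Delta_{i,i+1}=z_{i+1}=F_1(z_{i+1}),
\]
after correctly applying \eqref{eq: v recursion} with index $\ell=i+1$ so that $v_{i+2}=-v_i+z_{i+1}v_{i+1}$ and the $\det(v_i,v_i)$ term vanishes. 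This also yields the ``in particular'' statement.

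For the inductive step, assume the formula holds for all pairs with difference at most $j-i$. Apply \eqref{eq: v recursion} with $\ell = j$ to expand
\[
\Delta_{i,j+1}=\det(v_i,-v_{j-1}+z_j v_j)=-\Delta_{i,j-1}+z_j\Delta_{i,j}.
\]
Substituting the inductive hypothesis gives
\[
\Delta_{i,j+1}=z_j\,F_{j-i-1}(z_{i+1},\ldots,z_{j-1})-F_{j-i-2}(z_{i+1},\ldots,z_{j-2}),
\]
which is precisely the recursion \eqref{eq: def F} evaluated to yield $F_{j-i}(z_{i+1},\ldots,z_j)$, completing the induction.

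The main obstacle, such as it is, will be book-keeping around the indices of the $F$-polynomials: the recursion in \eqref{eq: def F} is stated with the ``last'' variable playing a special role, while the identity we want is stated in terms of a window of consecutive $z$'s. I would be careful to verify that the window $z_{i+1},\ldots,z_{j-1}$ shifts correctly (using the index-shift freedom built into \eqref{eq: def F}, namely that $F$ depends only on the number of arguments and their order, not on their absolute labels). No deeper technology beyond Lemma~\ref{lem: v to z} is required.
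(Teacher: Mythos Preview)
Your proof is correct. The paper takes a slightly different but equivalent route: instead of inducting on $j-i$ via the one-step recursion \eqref{eq: v recursion}, it uses Lemma~\ref{lem: braid mat} to write the full product $B(z_{i+1})\cdots B(z_j)$ in closed form, reads off $v_j = F_{j-i-1}(z_{i+1},\ldots,z_{j-1})\,v_{i+1} - F_{j-i-2}(z_{i+2},\ldots,z_{j-1})\,v_i$ from the matrix equation $(v_{i+1}\ \ -v_i)\,B(z_{i+1})\cdots B(z_j)=(v_{j+1}\ \ -v_j)$, and then takes $\det(v_i,\,\cdot\,)$. Your induction is essentially the same computation unwound step by step, and has the minor advantage of not needing to invoke Lemma~\ref{lem: braid mat} separately; the paper's version has the minor advantage of producing the explicit linear combination of $v_j$ in terms of $v_i,v_{i+1}$ in one shot.
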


\begin{proof}
Using the results from Lemma \ref{lem: v to z}, we have the following relations \begin{align*}
    B(z_1)\dots B(z_i)&=\begin{pmatrix}
    v_{i+1} &-v_i
    \end{pmatrix}\\
    B(z_1)\dots B(z_j)&=\begin{pmatrix}
    v_{j+1} &-v_j
\end{pmatrix}
\end{align*}
Given that $i<j$, we then rewrite
\begin{align*}
    B(z_1)\dots B(z_i)B(z_{i+1})\dots B(z_j)&=\begin{pmatrix}
        v_{j+1} &-v_j
    \end{pmatrix}\\
    \begin{pmatrix}
        v_{i+1}&-v_i
    \end{pmatrix}B(z_{i+1})\dots B(z_j)&=\begin{pmatrix}
        v_{j+1} &-v_j
    \end{pmatrix}
\end{align*}
From Theorem \ref{lem: braid mat}, the product of the braid matrices from $i+1$ to $j$ can be expressed as $$B(z_{i+1})\dots B(z_j)=\begin{pmatrix}
    F_{j-i}(z_{i+1},\dots,z_j) &-F_{j-i-1}(z_{i+1},\dots,z_{j-1})\\F_{j-i-1}(z_{i+2},\dots,z_j) &-F_{j-i-2}(z_{i+2},\dots,z_{j-1})
\end{pmatrix}$$
Which allows us to rewrite the previous equation as $$
\begin{pmatrix}
        v_{i+1}&-v_i
    \end{pmatrix}\begin{pmatrix}
    F_{j-i}(z_{i+1},\dots,z_j) &-F_{j-i-1}(z_{i+1},\dots,z_{j-1})\\F_{j-i-1}(z_{i+2},\dots,z_j) &-F_{j-i-2}(z_{i+2},\dots,z_{j-1})
\end{pmatrix}=\begin{pmatrix}
        v_{j+1} &-v_j
    \end{pmatrix}
    $$
    Here we obtain the equation 
    $$
    -v_j=-F_{j-i-1}(z_{i+1},\dots,z_{j-1})v_{i+1}+F_{j-i-2}(z_{i+2},\dots,z_{j-1})v_i$$
By finding an expression for $v_j$, we may now determine $\Delta_{ij}$, since determinants are linear, we find that
    \begin{align*}
       \Delta_{ij}&=\det\begin{pmatrix}
        v_i&v_j
    \end{pmatrix}=F_{j-i-1}(z_{i+1},\dots,z_{j-1})\det\begin{pmatrix}
        v_i&v_{i+1}
    \end{pmatrix}-F_{j-i-2}(z_{i+2},\dots,z_{j-1})\det\begin{pmatrix}
        v_i &v_i
    \end{pmatrix}\\
    &=F_{j-i-1}(z_{i+1},\dots,z_{j-1})(1)-F_{j-i-2}(z_{i+2},\dots,z_{j-1})(0)=F_{j-i-1}(z_{i+1},\dots,z_{j-1})
    \end{align*}

    To see that $\Delta_{i,i+2}=z_{i+1}$, we see that $\Delta_{ij}=F_{(i+2)-i-1}(z_{i+1})=F_1(z_{i+1})=z_{i+1}$ as desired.
\end{proof}

For $a<b<c<d$ we have the Pl\"ucker relation 
\begin{equation}
\label{eq: plucker}
\Delta_{ac}\Delta_{bd}=\Delta_{ab}\Delta_{cd}+\Delta_{ad}\Delta_{bc}.
\end{equation}
A special case of \eqref{eq: plucker} is 
$$
\Delta_{i,k}\Delta_{k-1,k+1}=\Delta_{i,k-1}\Delta_{k,k+1}+\Delta_{i,k+1}\Delta_{k-1,k}
$$
which in $\Pi_{2,k+1}^{\circ,1}$ translates to
$$
\Delta_{i,k}z_k=\Delta_{i,k-1}+\Delta_{i,k+1}
$$
For $i=1$ it is indeed equivalent to our recursion \eqref{eq: def F.}, see  Figure \ref{fig:plucker}

Outer edges of $\mathcal{P}$ correspond to frozen variables, while diagonals correspond to  mutable variables. In particular, $\Pi_{2,k+1}^{\circ}$ has $k$ frozen variables, whereas in  $\Pi_{2,k+1}^{\circ,1}$ we specialize the  frozen Pl\"ucker coordinates, $\Delta_{i,i+1}=1$ for $1\le i\le k$ and these can be neglected.   Thus $\Pi_{2,k+1}^{\circ,1}$ has one frozen variable $\Delta_{1,k+1}$ which we denote by $w$. To generate the quiver, in each triangle of the triangulation we connect the cluster variables by arrows in a clockwise order.
Mutations correspond to flips of triangulations due to the Plucker relation.

Consider the special chart $\fan$ in $\Pi_{2,k+1}^{\circ,1}$ corresponding to the ``fan" triangulation where the $k-2$ diagonals are defined by $\Delta_{1,i}$ for $2\le i\le k$, as seen in Figure \ref{fig:Ufan}. Equivalently, the chart $\fan$ is given by inequalities
$$
\fan=\{F_{i-1}(z_2,\dots,z_i)\neq 0,1\le i\le k\}\subset X(\sigma^k).
$$

\begin{figure}
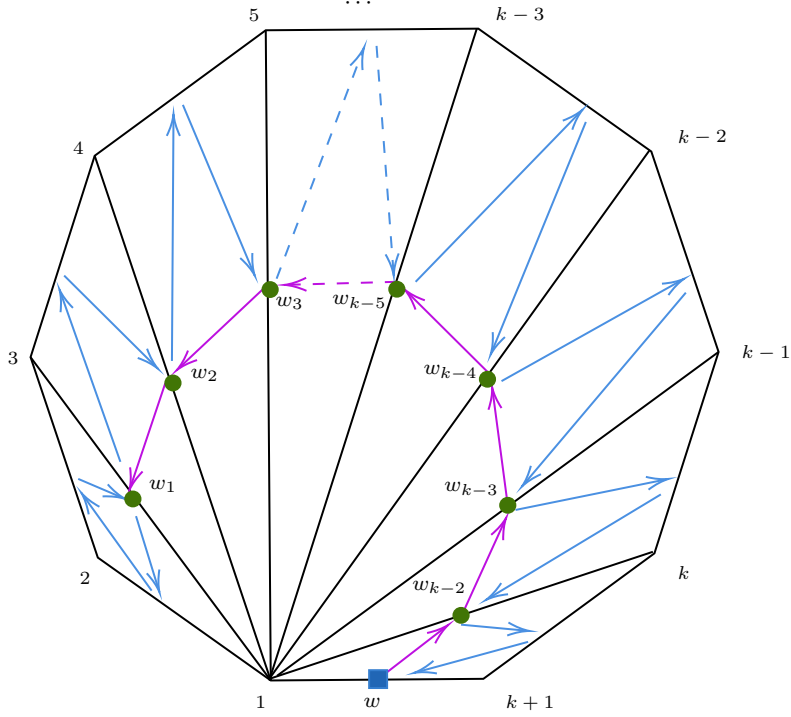

    \centering
    \include{Figures/U_fan}
    \caption{The special chart $\fan\in\Pi_{2,k+1}^{\circ,1}$ where each of the $k-2$ diagonals are have fixed endpoint at $v_1$. The Plu\"cker coordinates, or cluster variables, correspond to the weights of the edges given by either a blue square (frozen vertices) or a green circle (mutable vertices). The quiver of the cluster chart is generated by clockwise orientation of the colored arrows in each triangle of the triangulation. This procedure produces the quiver $A_{k-1}$, seen in purple, with $w$ as the singular frozen variable. In the terminology of \cite{CGGLSS}, this chart is given by the right inductive weave.}
    \label{fig:Ufan}
\end{figure}

 In this chart, the quiver is precisely $A_{k-1}$ with one frozen variable $w$. From lemma {} the mutable cluster variables are precisely $w_i=F_i(z_2,\dots,z_{i+1})$ and the frozen variable is $w=w_{k-2}=F_k(z_2,\ldots,z_{k+1})$.

\section{Ring structure on cohomology using (algebraic) deRham cohomology}

\subsection{Constructing the forms}
\label{sec: forms}

Define the one-form $\alpha=\frac{dw}{w}$ where $w=\Delta_{1,k+1}$ is the frozen cluster variable. Since $w\neq 0$ everywhere, $\alpha$ is regular everywhere.

Define the two-form as 
\begin{equation}
    \label{eq: def Omega}
\omega=\sum \varepsilon_{ij}\frac{dw_i}{w_i}\wedge\frac{dw_j}{w_j}
\end{equation}
on some cluster chart with quiver $(\varepsilon_{ij})$. By \cite[Section 2.3]{GSV} (see also \cite{LS}) the form $\omega$ is well-defined in any other cluster chart  and is given by a similar equation \eqref{eq: def Omega} for the mutated quiver. 
The cluster charts cover $X(\sigma^k)$ up to codimension 2 and $X(\sigma^k)$ is smooth, so $\omega$ extends to a regular form on $X(\sigma^k)$.

For the special chart $\fan$ we get
\begin{equation}
\label{eq: Omega fan}
\omega=\frac{dw}{w}\wedge\frac{dw_{k-2}}{w_{k-2}}+\sum_{i=1}^{k-3}\frac{dw_{i+1}}{w_{i+1}}\wedge \frac{dw_i}{w_i}
\end{equation}
where $w_i=\Delta_{1,i+2}$.

We can also write the forms $\alpha$ and $\omega$ explicitly in the coordinates $z_i$. Thus far, we have expressed $X(\sigma^k)$ as an open subset in the affine space with coordinates $z_1,\dots,z_{k-1}$ with $z_k$ expressed as some function of these. Similarly, we may also have expressed $X(\sigma^k)$ is an open subset in the affine space with coordinates $z_2,\ldots,z_k$ with $z_1$ expressed as some function of these, i.e., $F_k(z_1,\dots,z_k)=z_1F_{k-1}(z_2,\dots,z_k)-F_{k-2}(z_3,\dots,z_k)$ where $F_{-1}\equiv0$, $F_0\equiv1$, and $F_1(z_2)=z_2$. We will use $z_2,\ldots,z_k$ as a coordinate system on $X(\sigma^k)$ below.

\begin{lemma}
\label{lem: delta derivative}
For all $2\le i\le k$ and $2\le n\le k+1$ we have
$$
\frac{\partial \Delta_{1n}}{\partial z_i}=\Delta_{1i}\Delta_{in}.
$$
\end{lemma}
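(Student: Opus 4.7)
The plan is to reduce everything to the recursion \eqref{eq: def F} by using Lemma \ref{lem: Delta from F} to rewrite the Pl\"ucker coordinates in terms of $F$-polynomials, and then induct on $n$. By Lemma \ref{lem: Delta from F} we have
$$
\Delta_{1n}=F_{n-2}(z_2,\ldots,z_{n-1}),\quad \Delta_{1i}=F_{i-2}(z_2,\ldots,z_{i-1}),\quad \Delta_{in}=F_{n-i-1}(z_{i+1},\ldots,z_{n-1}),
$$
so the identity becomes a purely combinatorial statement about continuants.

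For the base case $n=3$ the only admissible $i$ is $i=2$, and both sides equal $1$, since $\Delta_{13}=z_2$, $\Delta_{12}=\Delta_{23}=1$. For the inductive step, assume the formula holds for all $\Delta_{1m}$ with $m\le n$. The key ingredient is the recursion $\Delta_{1,n+1}=z_n\Delta_{1n}-\Delta_{1,n-1}$, obtained by applying \eqref{eq: def F} to $F_{n-1}(z_2,\ldots,z_n)$. For $i\le n-1$ both $\Delta_{1n}$ and $\Delta_{1,n-1}$ depend on $z_i$ (but the coefficient $z_n$ does not), so the inductive hypothesis gives
$$
\frac{\partial\Delta_{1,n+1}}{\partial z_i}=z_n\,\Delta_{1i}\Delta_{in}-\Delta_{1i}\Delta_{i,n-1}=\Delta_{1i}\bigl(z_n\Delta_{in}-\Delta_{i,n-1}\bigr),
$$
and a second application of \eqref{eq: def F}, this time to $F_{n-i}(z_{i+1},\ldots,z_n)$, identifies the bracket with $\Delta_{i,n+1}$.

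The case $i=n$ is handled separately but immediately: in $\Delta_{1,n+1}=z_n\Delta_{1n}-\Delta_{1,n-1}$ the variable $z_n$ appears only linearly in the first term, so $\partial\Delta_{1,n+1}/\partial z_n=\Delta_{1n}=\Delta_{1n}\cdot\Delta_{n,n+1}$ since $\Delta_{n,n+1}=1$ in $\Pi_{2,k+1}^{\circ,1}$. There is no real obstacle here; the only thing to watch is matching index shifts when applying the recursion to $F$-polynomials whose argument lists start at different places. One could alternatively prove this in one stroke by first establishing the Euler-type continuant identity
$$
F_k(z_1,\ldots,z_k)=F_j(z_1,\ldots,z_j)\,F_{k-j}(z_{j+1},\ldots,z_k)-F_{j-1}(z_1,\ldots,z_{j-1})\,F_{k-j-1}(z_{j+2},\ldots,z_k),
$$
splitting at $j=i-1$ to isolate $z_i$ inside a single $F_{i-1}(z_2,\ldots,z_i)$ factor, and then differentiating; but the direct induction above is shorter and stays within the recursion already used throughout the paper.
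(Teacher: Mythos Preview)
Your proof is correct. The induction on $n$ via the recursion $\Delta_{1,n+1}=z_n\Delta_{1n}-\Delta_{1,n-1}$ goes through cleanly, and the separate treatment of $i=n$ is right. One small cosmetic point: when you invoke the inductive hypothesis for $\Delta_{1,n-1}$ at the bottom of the range you implicitly use the trivial case $n=2$ (where $\Delta_{12}=1$ is constant and $\Delta_{22}=0$), so it is worth stating that alongside the $n=3$ base case.

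The paper's argument is different and non-inductive: it factors the braid matrix product as $B(z_1)\cdots B(z_n)=C\,B(z_i)\,\widetilde{C}$ with $C=B(z_1)\cdots B(z_{i-1})$ and $\widetilde{C}=B(z_{i+1})\cdots B(z_n)$, reads off from the $(1,1)$ entry a formula for $F_n$ that is visibly linear in $z_i$, and differentiates once. This is precisely the ``one-stroke'' alternative you sketch at the end via the Euler continuant identity; the matrix factorization is just a packaging of that identity. Your inductive route is more elementary in that it never leaves the single recursion \eqref{eq: def F}, while the paper's approach has the advantage of producing the derivative formula $\partial F_n/\partial z_i=F_{i-1}F_{n-i}$ directly for all $i$ at once, without a case split at the endpoint $i=n$.
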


\begin{proof}
We have the matrix identity
$$
\left(
\begin{matrix}
F_n(z_1,\dots,z_n) & -F_{n-1}(z_1,\dots,z_{n-1})\\
F_{n-1}(z_2,\dots,z_n)  & -F_{n-2}(z_2,\dots,z_{n-1})
\end{matrix}
\right)=
C
\left(
\begin{matrix}
z_i & -1\\
1  & 0
\end{matrix}
\right)
\widetilde{C}
$$
where $$C=\left(
\begin{matrix}
F_{i-1}(z_1,\dots,z_{i-1}) & -F_{i-2}(z_1,\dots,z_{i-2})\\
F_{i-2}(z_2,\dots,z_{i-1})  & -F_{i-3}(z_2,\dots,z_{i-2})
\end{matrix}
\right),\, \widetilde{C}=\left(
\begin{matrix}
F_{n-i}(z_{i+1},\dots,z_n) & -F_{n-i-1}(z_{i+2},\dots,z_{n-1})\\
F_{n-i-1}(z_{i+2},\dots,z_n)  & -F_{n-i-2}(z_{i+2},\dots,z_{n-1})
\end{matrix}
\right),$$
which implies

\begin{align*}
    F_{n-2}(z_2,\dots,z_{n-1})=&(F_{i-2}(z_2,\dots,z_{i-1})z_i-F_{i-3}(z_2,\dots,z_{i-2}))F_{n-i-1}(z_{i+2},\dots,z_{n-1})\\&-F_{i-2}(z_2,\dots,z_{i-1})F_{n-i-2}(z_{i+2},\dots,z_{n-1})
\end{align*}
and
$$
\frac{\partial F_{n-2}(z_2,\dots,z_{n-1})}{\partial z_i}=F_{i-2}(z_2,\dots,z_{i-1})F_{n-i-1}(z_{i+2},\dots,z_{n-1}).
$$

Now by Lemma \ref{lem: Delta from F} we have 
$\Delta_{1,n}=F_{n-2}(z_2,\ldots,z_{n-1})$ and
$$
\frac{\partial \Delta_{1,n}}{\partial z_i}=
F_{i-2}(z_2,\dots,z_{i-1})F_{n-i-1}(z_{i+2},\dots,z_{n-1})=\Delta_{1,i}\Delta_{i,n}.
$$
\end{proof}

\begin{corollary}
We have
$$
\alpha=\frac{d\Delta_{1,k+1}}{\Delta_{1,k+1}}=\frac{1}{\Delta_{1,k+1}}\sum_{i=2}^{k}\Delta_{1,i}\Delta_{i,k+1}dz_i.
$$
\end{corollary}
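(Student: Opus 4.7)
The plan is to derive this as a direct consequence of Lemma \ref{lem: delta derivative}. By definition, $\alpha = dw/w$ with $w = \Delta_{1,k+1}$, so the first equality is just unwinding notation. For the second equality, I would use that $z_2,\ldots,z_k$ form a coordinate system on $X(\sigma^k)$ (as recalled just before Lemma \ref{lem: delta derivative}), so that
\[
d\Delta_{1,k+1} = \sum_{i=2}^{k} \frac{\partial \Delta_{1,k+1}}{\partial z_i}\, dz_i.
\]

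Then I would invoke Lemma \ref{lem: delta derivative} with $n = k+1$, which gives $\partial \Delta_{1,k+1}/\partial z_i = \Delta_{1,i}\Delta_{i,k+1}$ for each $2 \le i \le k$. Substituting and dividing by $\Delta_{1,k+1}$ yields the stated formula.

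Strictly speaking, one should also confirm that $z_1$, which is not a free coordinate on $X(\sigma^k)$, does not contribute an extra term. This is why the sum in the lemma starts at $i=2$: the relation $F_k(z_1,\ldots,z_k)=0$ expresses $z_1$ as a function of $z_2,\ldots,z_k$, so the exterior derivative on $X(\sigma^k)$ really does decompose over $dz_2,\ldots,dz_k$ only. There is no real obstacle here; the corollary is essentially a one-line consequence of the lemma, and the only thing to be careful about is the indexing range of the sum.
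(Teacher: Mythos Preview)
Your proposal is correct and is exactly the approach the paper intends: the corollary is stated without proof immediately after Lemma~\ref{lem: delta derivative}, and follows by expanding $d\Delta_{1,k+1}$ in the coordinates $z_2,\ldots,z_k$ and substituting the partial derivatives from that lemma with $n=k+1$. Your remark about $z_1$ is fine, though in fact $\Delta_{1,k+1}=F_{k-1}(z_2,\ldots,z_k)$ by Lemma~\ref{lem: Delta from F} does not involve $z_1$ at all, so the issue never arises.
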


\begin{lemma}
\label{lem: fractions}
For $i\le k$ we have 
$$
\frac{1}{\Delta_{1,i}\Delta_{1,i+1}}+\ldots+\frac{1}{\Delta_{1,k}\Delta_{1,k+1}}=\frac{\Delta_{i,k+1}}{\Delta_{1,i}\Delta_{1,k+1}}.
$$
\end{lemma}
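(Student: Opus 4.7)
The plan is to recognize the sum on the left-hand side as a telescoping sum, where each summand is an explicit difference built from Plücker coordinates. The key tool is the three-term Plücker relation \eqref{eq: plucker} specialized at the four indices $1, i, j, j+1$, combined with the normalization $\Delta_{j,j+1}=1$ that holds on $\Pi_{2,k+1}^{\circ,1}$.

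Concretely, I would first apply \eqref{eq: plucker} with $a=1,\ b=i,\ c=j,\ d=j+1$ (where $i\le j\le k$) to obtain
\[
\Delta_{1,j}\Delta_{i,j+1} \;=\; \Delta_{1,i}\Delta_{j,j+1} + \Delta_{1,j+1}\Delta_{i,j}.
\]
Setting $\Delta_{j,j+1}=1$ and dividing both sides by $\Delta_{1,i}\Delta_{1,j}\Delta_{1,j+1}$ yields the identity
\[
\frac{1}{\Delta_{1,j}\Delta_{1,j+1}} \;=\; \frac{\Delta_{i,j+1}}{\Delta_{1,i}\Delta_{1,j+1}} - \frac{\Delta_{i,j}}{\Delta_{1,i}\Delta_{1,j}},
\]
which expresses each summand on the left-hand side of the lemma as the difference of consecutive terms in the sequence $a_j := \Delta_{i,j}/(\Delta_{1,i}\Delta_{1,j})$.

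Summing this identity over $j = i, i+1, \ldots, k$ gives a telescoping collapse to $a_{k+1}-a_i$, i.e.\
\[
\sum_{j=i}^{k}\frac{1}{\Delta_{1,j}\Delta_{1,j+1}} \;=\; \frac{\Delta_{i,k+1}}{\Delta_{1,i}\Delta_{1,k+1}} - \frac{\Delta_{i,i}}{\Delta_{1,i}\Delta_{1,i}}.
\]
The final step is to observe that $\Delta_{i,i}=\det(v_i,v_i)=0$, so the boundary term at $j=i$ vanishes and the desired formula follows. There is no real obstacle here; the only thing to be careful about is lining up the indices in the Plücker relation correctly so that the telescoping runs in the right direction, and noting that the specialization $\Delta_{j,j+1}=1$ (valid throughout $\Pi_{2,k+1}^{\circ,1}$) is essential to get a clean two-term difference.
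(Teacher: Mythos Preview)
Your proof is correct and essentially the same as the paper's: both rely on the Pl\"ucker relation \eqref{eq: plucker} specialized to indices $1,i,j,j+1$ together with $\Delta_{j,j+1}=1$. The paper packages this as an induction on $k$, whereas you make the telescoping explicit via $a_j=\Delta_{i,j}/(\Delta_{1,i}\Delta_{1,j})$; these are two presentations of the same computation.
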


\begin{proof}
We prove it by induction in $k$, for $k=i$ the statement is clear since $\Delta_{i,i+1}=1$. For the step of induction, suppose that it is true for $k-1$, then
\begin{align*}
    \frac{1}{\Delta_{1,i}\Delta_{1,i+1}}+\ldots+\frac{1}{\Delta_{1,k-1}\Delta_{1,k}}+\frac{1}{\Delta_{1,k}\Delta_{1,k+1}}&=\frac{\Delta_{i,k}}{\Delta_{1,i}\Delta_{1,k}}+\frac{1}{\Delta_{1,k}\Delta_{1,k+1}}\\    &=\frac{\Delta_{i,k}\Delta_{1,k+1}+\Delta_{1,i}\Delta_{k,k+1}}{\Delta_{1,i}\Delta_{1,k}\Delta_{1,k+1}},
\end{align*}

which by Pl\"ucker relation simplifies to 
$$
\frac{\Delta_{1,k}\Delta_{i,k+1}}{\Delta_{1,i}\Delta_{1,k}\Delta_{1,k+1}}=\frac{\Delta_{i,k+1}}{\Delta_{1,i}\Delta_{1,k+1}}.
$$
\end{proof}

\begin{lemma}
\label{lem: Omega formula}
We have 
$$
\omega=\frac{1}{\Delta_{1,k+1}}\sum_{2\le i<j\le k}\Delta_{1,i}\Delta_{i,j} \Delta_{j,k+1}dz_i\wedge dz_j.
$$
\end{lemma}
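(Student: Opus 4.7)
The plan is to pull the fan-chart formula \eqref{eq: Omega fan} back to the affine coordinates $z_2,\ldots,z_k$ and then simplify via the two algebraic tools immediately preceding the statement, namely Lemma \ref{lem: delta derivative} (which differentiates $\Delta_{1,n}$ with respect to $z_i$) and Lemma \ref{lem: fractions} (the partial-fraction identity). Writing $W_n:=\Delta_{1,n}$ and observing that $w=W_{k+1}$ and $w_i=W_{i+2}$, the two pieces of \eqref{eq: Omega fan} unify into the single telescoping sum
$$
\omega=\sum_{n=3}^{k}\frac{dW_{n+1}}{W_{n+1}}\wedge\frac{dW_n}{W_n}.
$$

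Next, I would substitute $dW_n=\sum_{m=2}^{n-1}\Delta_{1,m}\Delta_{m,n}\,dz_m$ (from Lemma \ref{lem: delta derivative}) and collect the coefficient of a fixed monomial $dz_i\wedge dz_j$ with $2\le i<j\le k$. Two terms contribute to each $n$: one with $dz_i$ coming from $dW_{n+1}$ and $dz_j$ coming from $dW_n$ (which requires $n\ge j+1$), and one with $dz_j$ from $dW_{n+1}$ and $dz_i$ from $dW_n$ (which requires only $n\ge j$, and carries a minus sign from the wedge). The coefficient of $dz_i\wedge dz_j$ in $\omega$ thus becomes
$$
\Delta_{1,i}\Delta_{1,j}\sum_{n=j}^{k}\frac{1}{\Delta_{1,n}\Delta_{1,n+1}}\bigl(\mathbf{1}_{n\ge j+1}\Delta_{i,n+1}\Delta_{j,n}-\Delta_{j,n+1}\Delta_{i,n}\bigr).
$$

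At this stage the key algebraic input is the Plücker relation \eqref{eq: plucker} applied to the quadruple $(i,j,n,n+1)$: using $\Delta_{n,n+1}=1$ it yields
$$
\Delta_{i,n+1}\Delta_{j,n}-\Delta_{j,n+1}\Delta_{i,n}=-\Delta_{i,j}\qquad(n\ge j+1),
$$
while the isolated $n=j$ term contributes $-\Delta_{j,j+1}\Delta_{i,j}=-\Delta_{i,j}$ as well, because $\Delta_{j,j+1}=1$. The inner bracket therefore becomes a constant $-\Delta_{i,j}$, independent of $n$, and the whole coefficient collapses (up to sign) to
$$
\Delta_{1,i}\Delta_{1,j}\Delta_{i,j}\sum_{n=j}^{k}\frac{1}{\Delta_{1,n}\Delta_{1,n+1}}.
$$
Lemma \ref{lem: fractions} then evaluates this partial-fraction sum to $\Delta_{j,k+1}/(\Delta_{1,j}\Delta_{1,k+1})$, the $\Delta_{1,j}$ in the denominator cancels, and one recovers the claimed coefficient $\Delta_{1,i}\Delta_{i,j}\Delta_{j,k+1}/\Delta_{1,k+1}$.

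The main obstacle is purely bookkeeping: reconciling the orientation conventions so that the sign produced by the wedge ordering in \eqref{eq: Omega fan} agrees with the sign coming out of the Plücker identity, and handling the $n=j$ boundary carefully so that it merges with the generic sum. No further ingredients beyond Lemmas \ref{lem: delta derivative} and \ref{lem: fractions} and the Plücker relation \eqref{eq: plucker} are required; the argument is a direct computation rather than an induction, paralleling the formula for $\alpha$ derived immediately before.
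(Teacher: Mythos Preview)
Your proposal is correct and follows essentially the same route as the paper's proof: both expand the fan-chart expression for $\omega$ via Lemma~\ref{lem: delta derivative}, collapse the cross terms using the Pl\"ucker relation \eqref{eq: plucker} (yielding a factor $\Delta_{i,j}$ independent of the summation index), and then apply Lemma~\ref{lem: fractions} to evaluate the remaining partial-fraction sum. The only cosmetic difference is that the paper first computes each $d\Delta_{1,s}\wedge d\Delta_{1,s+1}$ in closed form and then sums over $s$, whereas you fix $(i,j)$ first and sum over $n$; your explicit handling of the boundary case $n=j$ and of the sign convention is in fact more careful than the paper, which silently flips the wedge ordering between \eqref{eq: Omega fan} and the displayed sum in its proof.
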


\begin{proof}
By Lemma \ref{lem: delta derivative} we can write 
$$
d\Delta_{1,s}\wedge d\Delta_{1,s+1}=
\sum_{i<j\le s} (\Delta_{1,i}\Delta_{i,s}\Delta_{1,j}\Delta_{j,s+1}-\Delta_{1,i}\Delta_{i,s+1}\Delta_{1,j}\Delta_{j,s})dz_i\wedge dz_j=
$$
$$
\sum_{i<j\le s} \Delta_{1,i}\Delta_{1,j}(\Delta_{i,s}\Delta_{j,s+1}-\Delta_{i,s+1}\Delta_{j,s})dz_i\wedge dz_j.
$$
By Pl\"ucker relation we have
$$
\Delta_{i,s}\Delta_{j,s+1}-\Delta_{i,s+1}\Delta_{j,s}=\Delta_{ij},
$$
hence
$$
d\Delta_{1,s}\wedge d\Delta_{1,s+1}=\sum_{i<j\le s} \Delta_{1,i}\Delta_{1,j}\Delta_{i,j} dz_i\wedge dz_j.
$$

The coefficient at $dz_i\wedge dz_j$ does not depend on $k$, so we get
$$
\omega=\sum_{s=1}^{k}\frac{d\Delta_{1,s}\wedge d\Delta_{1,s+1}}{\Delta_{1,s}\Delta_{1,s+1}}=
\sum_{i<j}\Delta_{1,i}\Delta_{1,j}\Delta_{i,j} dz_i\wedge dz_j\left(\frac{1}{\Delta_{1,j}\Delta_{1,j+1}}+\ldots+\frac{1}{\Delta_{1,k}\Delta_{1,k+1}}\right).
$$
By Lemma \ref{lem: fractions} this simplifies to
$$
\sum_{i<j}\frac{\Delta_{1,i}\Delta_{1,j}\Delta_{i,j} \Delta_{j,k+1}dz_i\wedge dz_j}{\Delta_{1,j}\Delta_{1,k+1}}=\frac{\sum_{i<j}\Delta_{1,i}\Delta_{i,j} \Delta_{j,k+1}dz_i\wedge dz_j}{\Delta_{1,k+1}}.
$$
\end{proof}

In particular, Lemma \ref{lem: Omega formula} gives a direct proof that $\omega$ is regular everywhere on $X(\sigma^k)$. See Section \ref{sec: examples forms} for explicit examples and computations.

\subsection{de Rham cohomology}

By construction, $d\alpha=d\omega=0$, so they represent some de Rham cohomology classes. The following theorem shows that these are in fact nonzero in cohomology and generate $H^*(X(\sigma^k))$ as an algebra.

\begin{theorem}
\label{thm: alg forms}
The forms $\alpha$ and $\omega$ generate $H^*(X(\sigma^k))$ as an algebra, modulo the following relations:

1) If $k$ is even, the only relation is $\omega^{\frac{k}{2}}=0$. The basis in cohomology is given by:
\begin{equation}
\label{eq: basis even}
1,\alpha,\omega,\alpha\omega,\ldots,\omega^{\frac{k}{2}-1},\alpha\omega^{\frac{k}{2}-1}.
\end{equation}

2) If $k$ is odd, the relations are $\alpha\omega^{\frac{k-1}{2}}=\omega^{\frac{k+1}{2}}=0$. The basis in cohomology is given by: 
\begin{equation}
\label{eq: basis odd}
1,\alpha,\omega,\alpha\omega,\ldots,\alpha\omega^{\frac{k-3}{2}},\omega^{\frac{k-1}{2}}.
\end{equation}
\end{theorem}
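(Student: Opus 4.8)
The plan is to deduce the ring structure from the additive computation of Theorem~\ref{thm: homology} together with a single explicit computation on the torus chart $\fan$. By Theorem~\ref{thm: homology}, $\dim_\C H^i(X(\sigma^k))=1$ for $0\le i\le k-1$ and $H^i=0$ otherwise, so $\dim_\C H^*(X(\sigma^k))=k$. Since $\alpha$ is a $1$-form, $\alpha\wedge\alpha=0$ identically, hence $\alpha^2=0$; and any monomial in $\alpha,\omega$ of degree $\ge k$ vanishes because $H^{\ge k}=0$, which gives $\omega^{k/2}=0$ for $k$ even and $\alpha\omega^{(k-1)/2}=\omega^{(k+1)/2}=0$ for $k$ odd. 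Therefore the tautological graded ring map $R\to H^*(X(\sigma^k))$ is defined, where $R:=\C[\alpha,\omega]/(\alpha^2,\text{the displayed relations})$, and $R$ is spanned by the monomials in \eqref{eq: basis even} resp.\ \eqref{eq: basis odd}, which contain exactly one monomial in each degree $i\in\{0,\dots,k-1\}$ --- namely $\omega^{i/2}$ for $i$ even and $\alpha\omega^{(i-1)/2}$ for $i$ odd --- so $\dim R\le k$. It thus suffices to show each of these $k$ monomials is nonzero in $H^*(X(\sigma^k))$: that makes them a $\C$-basis, shows $\alpha,\omega$ generate, and forces $R\to H^*$ to be an isomorphism, so there are no relations beyond those stated.

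The reduction is to the fan chart. The chart $\fan\subset X(\sigma^k)$ is a cluster seed torus, so $\fan\cong(\C^*)^{k-1}$ with torus coordinates the cluster variables $w,w_1,\dots,w_{k-2}$, and $H^*(\fan;\C)$ is the exterior algebra on $\tfrac{dw}{w},\tfrac{dw_1}{w_1},\dots,\tfrac{dw_{k-2}}{w_{k-2}}$; the restriction $H^*(X(\sigma^k))\to H^*(\fan)$ is a homomorphism of graded algebras, so it suffices to show each monomial above restricts to a nonzero class. By construction $\alpha|_{\fan}=\tfrac{dw}{w}$, and by \eqref{eq: Omega fan} the class $\omega|_{\fan}$ is, after reindexing the coordinates along the path $w - w_{k-2} - \cdots - w_1$ underlying the $A_{k-1}$-quiver of this chart, the ``path two-form''. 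Writing $\varepsilon_1,\dots,\varepsilon_{k-1}$ for the logarithmic forms of the coordinates along this path, indexed so that $\varepsilon_1$ corresponds to $w$, we get $\alpha|_{\fan}=\varepsilon_1$ and $\Omega:=\omega|_{\fan}=\sum_{j=1}^{k-2}c_j\,\varepsilon_j\wedge\varepsilon_{j+1}$ for some signs $c_j\in\{\pm1\}$. Since $\omega^m$ and $\alpha\omega^m$ restrict to $\Omega^m$ and $\varepsilon_1\wedge\Omega^m$, it remains to prove, in $\Lambda[\varepsilon_1,\dots,\varepsilon_{k-1}]$, that $\Omega^m\ne0$ for $m\le\lfloor(k-1)/2\rfloor$ and $\varepsilon_1\wedge\Omega^m\ne 0$ for $m\le\lfloor(k-2)/2\rfloor$; one checks these are exactly the ranges of $m$ demanded by the list above (for $k$ even both are needed up to $m=k/2-1$; for $k$ odd, $\Omega^m\ne0$ up to $m=(k-1)/2$ and $\varepsilon_1\wedge\Omega^m\ne0$ up to $m=(k-3)/2$).

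For the exterior-algebra computation, expand $\Omega^m=\big(\sum_j c_j\,\varepsilon_j\wedge\varepsilon_{j+1}\big)^m$; a term is nonzero only when the chosen edges $\{j,j+1\}$ are pairwise disjoint, i.e.\ form a size-$m$ matching of the path on $\{\varepsilon_1,\dots,\varepsilon_{k-1}\}$. Extracting the coefficient of the single monomial $\varepsilon_1\varepsilon_2\cdots\varepsilon_{2m}$ (which makes sense when $2m\le k-1$): the only contributing edge-sets are perfect matchings of the sub-path on $\{1,\dots,2m\}$, and such a path has a \emph{unique} perfect matching (vertex $1$ must be paired with vertex $2$, then induct); hence this coefficient equals $\pm\,m!\ne0$ and $\Omega^m\ne0$. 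Likewise the coefficient of $\varepsilon_1\varepsilon_2\cdots\varepsilon_{2m+1}$ in $\varepsilon_1\wedge\Omega^m$ is $\pm$ the coefficient of $\varepsilon_2\cdots\varepsilon_{2m+1}$ in $\Omega^m$, and the sub-path on $\{2,\dots,2m+1\}$ again has a unique perfect matching, so this is $\pm\,m!\ne0$ whenever $2m+1\le k-1$, giving $\varepsilon_1\wedge\Omega^m\ne0$. Combined with the reduction, this proves the theorem.

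I expect the crux to be this last step: showing that no cancellation occurs in the expansion of $\Omega^m$, which rests on the elementary but essential fact that a path graph admits at most one perfect matching on any prescribed vertex set, and on the observation that $w$ occupies an \emph{endpoint} of the fan-chart quiver --- precisely why $\varepsilon_1\wedge\Omega^m$ survives one power of $\Omega$ less than $\Omega^m$ itself, matching the parity pattern in \eqref{eq: basis even}--\eqref{eq: basis odd}. A lesser nuisance is keeping track of the signs $c_j$ coming from \eqref{eq: Omega fan} and checking that they only multiply the surviving coefficients by units, so that they are harmless.
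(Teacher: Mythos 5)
Your proof is correct and takes essentially the same route as the paper: restrict to the fan chart $\fan\simeq(\C^*)^{k-1}$, use \eqref{eq: Omega fan} to realize $\omega|_{\fan}$ as a sum of consecutive wedge products along the $A_{k-1}$-path, show the relevant powers are nonzero in the exterior algebra $H^*(\fan)$, and conclude by the one-dimensionality of each $H^i(X(\sigma^k))$ from Theorem~\ref{thm: homology}. The one point where you do better than the paper is in the non-vanishing step: the paper expands $\omega^m$ and $\alpha\omega^m$ directly into a sum over matchings and asserts the result is nonzero, whereas you isolate the coefficient of a single monomial $\varepsilon_1\cdots\varepsilon_{2m}$ (resp.\ $\varepsilon_2\cdots\varepsilon_{2m+1}$) and use the fact that a sub-path has at most one perfect matching, yielding $\pm m!$ with no possibility of cancellation; this is a cleaner and more robust way to reach the same conclusion, and you correctly identify the role of $w$ sitting at an endpoint of the path. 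You also make explicit why the stated relations hold (degree reasons plus $\alpha\wedge\alpha=0$), a step the paper leaves implicit. One small remark: with your reindexing along the path $w,w_{k-2},\ldots,w_1$, the coefficients $c_j$ in \eqref{eq: Omega fan} all come out to be $+1$, so the sign bookkeeping you flag as a ``nuisance'' actually evaporates.
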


\begin{proof}
We work in the chart $\fan$, there is a natural inclusion map $i:\fan\to X(\sigma^k)$ and the corresponding restriction map in cohomology:
$i^*:H^*(X(\sigma^k))\to H^*(\fan)$. 

We want to first prove that the restrictions of all the forms \eqref{eq: basis even} and  \eqref{eq: basis odd} to $H^*(\fan)$ do not vanish, this would imply that these forms do not vanish in $H^*(X(\sigma^k))$. Recall that $\fan\simeq (\C^*)^{k-1}$ with coordinates $w_1,\ldots,w_{k-2},w=w_{k-1}$, so $H^*(\fan)$ is isomorphic to an exterior algebra in $\frac{dw_i}{w_i}$. 

Suppose $k$ is odd then
    \begin{align*}
        \omega^{\frac{k-1}{2}}&=\left(\frac{dw_2}{w_2}\wedge\frac{dw_1}{w_1}+\ldots+\frac{dw}{w}\wedge\frac{dw_{k-2}}{w_{k-2}}\right)^{(k-1)/2}\\
        &=\left(k-1)/2\right)!\frac{dw_1}{w_1}\wedge \dots\wedge\frac{dw_{k-2}}{w_{k-2}}\wedge \frac{dw}{w}
    \end{align*}
and 
\begin{align*}
        \alpha\omega^{\frac{k-3}{2}}&=
        \frac{dw}{w}\wedge\left(\frac{dw_2}{w_2}\wedge\frac{dw_1}{w_1}+\ldots+\frac{dw}{w}\wedge\frac{dw_{k-2}}{w_{k-2}}\right)^{(k-3)/2}\\
        &=\frac{dw}{w}\wedge\left(\frac{dw_2}{w_2}\wedge\frac{dw_1}{w_1}+\ldots+\frac{dw_{k-2}}{w_{k-2}}\wedge\frac{dw_{k-3}}{w_{k-3}}\right)^{(k-3)/2}\\
        &=\frac{dw}{w}\wedge\left((k-3)/2 \right)!\sum_{j=0}^{(k-5)/2}\frac{dw_1}{w_1}\wedge \dots
        \widehat{\frac{dw_{2j+1}}{w_{2j+1}}}\dots
        \wedge\frac{dw_{k-3}}{w_{k-3}}
    \end{align*}
In particular, these are nonzero. Suppose $k$ is even, then similarly
    \begin{align*}
        \omega^{\frac{k}{2}-1}=
        \sum_{j=0}^{k/2-2}
        \frac{dw_1}{w_1}\wedge\frac{dw_2}{w_2}\wedge\dots \wedge\widehat{\frac{dw_{2j+1}}{w_{2j+1}}}\wedge \dots \wedge\frac{dw_{k-2}}{w_{k-2}}\wedge\frac{dw}{w}.\\     
    \end{align*}

 and
 \begin{align*}
        \alpha\omega^{\frac{k-3}{2}}&=\left((k-1)/2 \right)!\frac{dw_1}{w_1}\wedge\frac{dw_2}{w_2}\wedge\dots\wedge\frac{dw_{k-3}}{w_{k-3}}\wedge \frac{dw_{k-2}}{w_{k-2}}\wedge \frac{dw}{w}\\
    \end{align*}

This implies that all the forms in \eqref{eq: basis even} and  \eqref{eq: basis odd} are nonzero in $H^*(\fan)$ and hence nonzero in $H^*(X(\sigma^k))$. On the other hand, by Theorem \ref{thm: homology} the corresponding cohomology groups of $X(\sigma^k)$ are one-dimensional in each degree; therefore, we obtain a basis. 
\end{proof}

\subsection{Examples}
\label{sec: examples forms}

\begin{example}
    Braid variety associated to $\beta=\sigma^3$ 
            \begin{align*}
                X(\sigma^3)&=\left\{z_1z_2z_3-z_3-z_1=0\right\}\\
            &=\left\{z_1z_2-1\neq0\right\}
            \end{align*}
            Using row operations and scaling the columns, we can transform any matrix in $\Pi_{2,4}^\circ$ to the form

$$
V=\left(
\begin{matrix}
1 & z_1 & z_1z_2-1 & z_1z_2z_3-z_1-z_3 \\
0 & 1  & z_2 & z_2z_3-1.
\end{matrix}
\right)\in\Pi_{2,4}^{\circ,1}
$$
Using the correspondence of cluster algebras and Grassmannians, we obtain two cluster charts, as seen in Figure \ref{fig:sigma 3}:

    \begin{figure}[ht!]
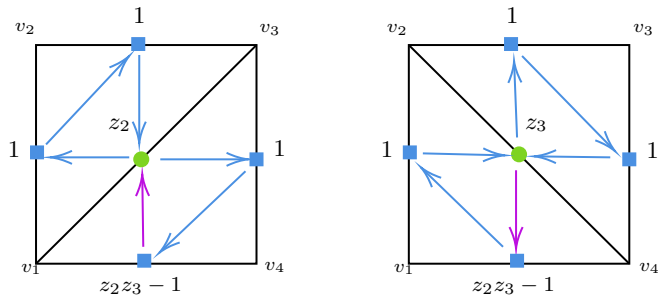

        \centering
        \include{Figures/Charts_sigma_3}
        \vspace{-1cm}
        \caption{The two cluster charts for the braid variety $X(\sigma^3)$. On the left is chart $U_1$ where the vectors $v_i\in\Pi_{2,4}^{\circ,1}$ for $1\le i\le 4$ correspond to the vertices of the polygon. The purple arrow depicts the Dynkin diagram $A_1$ with a frozen. On the right is chart $U_2$ which corresponds to the mutation of chart $U_1$.}
        \label{fig:sigma 3}
    \end{figure}
         \vspace{-.5cm}
        $$U_1=\{z_2\neq0\} \text{ with coordinates }(w_1=z_2,\;w=z_2z_3-1)
        $$
        $$U_2=\{z_3\neq0\} \text{ with coordinates }(w'_1=z_3,\;w=z_2z_3-1)
        $$
        
        We compute the cohomology of $X(\beta)$ using the (algebraic) de Rham cohomology on chart 1.
        Let $U_1=\{w_1=z_2\neq0,\;w=z_2z_3-1\neq0\}$. Then all possible forms   \[ H^*(U_1)=H^*\big((\C^*)^2\big)=\left\langle 1,\frac{dw_1}{w_1},\frac{dw}{w},\frac{dw}{w}\wedge\frac{dw_1}{w_1}\right\rangle\]
        To determine the cohomology, it suffices to determine which of the above forms extend to $X(\sigma^3)$. The forms which extend are
    \begin{itemize}
        \item $1$
        \item $\dfrac{dw}{w}=\dfrac{z_2 dz_3+z_3dz_2}{z_2z_3-1}$
        \item $\dfrac{dw}{w}\wedge\dfrac{dw_1}{w_1}=\dfrac{dz_3\wedge dz_2}{z_2z_3-1}$
    \end{itemize}
  
    The 2-form can be deduced from the quiver shown in Figure \ref{fig:sigma 3} which agrees with \cite{LS}. Therefore, $H^0(X(\sigma^3))=H^1(X(\sigma^3))=H^2(X(\sigma^3))=\C$, which agrees with Theorem \ref{thm: homology}.

    In addition, on the chart $U_2=\{w'_1=z_3\ne0,\;w=z_2z_3-1\ne 0\}$, with possible forms
    $$ 
    H^*(U_2)=H^*\big((\C^*)^2\big)=\left\langle 1,\frac{dw'_1}{w'_1},\frac{dw}{w},\frac{dw}{w}\wedge\frac{dw'_1}{w'_1}\right\rangle
    $$
    the forms which extend are 
    \begin{itemize}
        \item $1$
        \item $\dfrac{dw}{w}=\dfrac{z_2 dz_3+z_3dz_2}{z_2z_3-1}$
        \item $\dfrac{dw}{w}\wedge\dfrac{dw'_1}{w'_1}=\dfrac{dz_2\wedge dz_3}{z_2z_3-1}$
    \end{itemize}
    Indeed, the cohomology of $X(\sigma^3)$ is independent from the choice of a chart.
\end{example}

\begin{example}
    The braid variety associated to $\beta=\sigma^4$ 
            \begin{align*}
                X(\sigma^4)&=\left\{z_1z_2z_3z_4-z_1z_2-z_1z_4-z_3z_4+1=0\right\}\\
            &=\left\{z_1z_2z_3-z_3-z_1\neq0\right\}
            \end{align*}
            with open positroid variety of the form            
\[
V=
\left(
\begin{matrix}
1 & z_1 & z_1z_2-1 & z_1z_2z_3-z_1-z_3 & z_1z_2z_3z_4-z_1z_2-z_1z_4-z_3z_4+1\\
0 & 1  & z_2 & z_2z_3-1 &z_2z_3z_4-z_2-z_4
\end{matrix}
\right)\in\Pi_{2,5}^{\circ,1}
\]
Using the correspondence of cluster algebras and Grassmannians, we obtain one of five cluster charts, see Figure \ref{fig:sigma 4}:
\begin{figure}[ht!]
    \centering
    \tikzset{every picture/.style={line width=0.75pt}} 

\begin{tikzpicture}[x=0.75pt,y=0.75pt,yscale=-1,xscale=1]

\draw   (353.33,120.42) -- (310.88,248.13) -- (171.41,248.77) -- (127.67,121.45) -- (240.1,42.13) -- cycle ;
\draw    (171.39,248.76) -- (240.12,42.13) ;
\draw    (171.39,248.76) -- (353.34,120.43) ;
\draw  [color={rgb, 255:red, 74; green, 144; blue, 226 }  ,draw opacity=1 ][fill={rgb, 255:red, 74; green, 144; blue, 226 }  ,fill opacity=1 ] (238.86,244.35) -- (247.43,244.35) -- (247.43,252.59) -- (238.86,252.59) -- cycle ;
\draw  [color={rgb, 255:red, 74; green, 144; blue, 226 }  ,draw opacity=1 ][fill={rgb, 255:red, 74; green, 144; blue, 226 }  ,fill opacity=1 ] (326.71,181.18) -- (335.29,181.18) -- (335.29,189.42) -- (326.71,189.42) -- cycle ;
\draw  [color={rgb, 255:red, 74; green, 144; blue, 226 }  ,draw opacity=1 ][fill={rgb, 255:red, 74; green, 144; blue, 226 }  ,fill opacity=1 ] (293.14,78.87) -- (301.71,78.87) -- (301.71,87.11) -- (293.14,87.11) -- cycle ;
\draw  [color={rgb, 255:red, 74; green, 144; blue, 226 }  ,draw opacity=1 ][fill={rgb, 255:red, 74; green, 144; blue, 226 }  ,fill opacity=1 ] (172.43,81.62) -- (181,81.62) -- (181,89.86) -- (172.43,89.86) -- cycle ;
\draw  [color={rgb, 255:red, 74; green, 144; blue, 226 }  ,draw opacity=1 ][fill={rgb, 255:red, 74; green, 144; blue, 226 }  ,fill opacity=1 ] (145.29,179.81) -- (153.86,179.81) -- (153.86,188.04) -- (145.29,188.04) -- cycle ;
\draw  [color={rgb, 255:red, 126; green, 211; blue, 33 }  ,draw opacity=1 ][fill={rgb, 255:red, 126; green, 211; blue, 33 }  ,fill opacity=1 ] (203.86,138.61) .. controls (203.86,135.95) and (206.1,133.8) .. (208.86,133.8) .. controls (211.62,133.8) and (213.86,135.95) .. (213.86,138.61) .. controls (213.86,141.26) and (211.62,143.41) .. (208.86,143.41) .. controls (206.1,143.41) and (203.86,141.26) .. (203.86,138.61) -- cycle ;
\draw  [color={rgb, 255:red, 126; green, 211; blue, 33 }  ,draw opacity=1 ][fill={rgb, 255:red, 126; green, 211; blue, 33 }  ,fill opacity=1 ] (257.37,184.6) .. controls (257.37,181.94) and (259.61,179.79) .. (262.37,179.79) .. controls (265.13,179.79) and (267.37,181.94) .. (267.37,184.6) .. controls (267.37,187.25) and (265.13,189.4) .. (262.37,189.4) .. controls (259.61,189.4) and (257.37,187.25) .. (257.37,184.6) -- cycle ;
\draw [color={rgb, 255:red, 74; green, 144; blue, 226 }  ,draw opacity=1 ]   (274.57,187.36) -- (316.86,186.7) ;
\draw [shift={(318.86,186.67)}, rotate = 179.11] [color={rgb, 255:red, 74; green, 144; blue, 226 }  ,draw opacity=1 ][line width=0.75]    (10.93,-3.29) .. controls (6.95,-1.4) and (3.31,-0.3) .. (0,0) .. controls (3.31,0.3) and (6.95,1.4) .. (10.93,3.29)   ;
\draw [color={rgb, 255:red, 74; green, 144; blue, 226 }  ,draw opacity=1 ]   (323.14,192.85) -- (254.76,242.48) ;
\draw [shift={(253.14,243.66)}, rotate = 324.03] [color={rgb, 255:red, 74; green, 144; blue, 226 }  ,draw opacity=1 ][line width=0.75]    (10.93,-3.29) .. controls (6.95,-1.4) and (3.31,-0.3) .. (0,0) .. controls (3.31,0.3) and (6.95,1.4) .. (10.93,3.29)   ;
\draw [color={rgb, 255:red, 74; green, 144; blue, 226 }  ,draw opacity=1 ]   (201,142.73) -- (159.68,177.15) ;
\draw [shift={(158.14,178.43)}, rotate = 320.2] [color={rgb, 255:red, 74; green, 144; blue, 226 }  ,draw opacity=1 ][line width=0.75]    (10.93,-3.29) .. controls (6.95,-1.4) and (3.31,-0.3) .. (0,0) .. controls (3.31,0.3) and (6.95,1.4) .. (10.93,3.29)   ;
\draw [color={rgb, 255:red, 74; green, 144; blue, 226 }  ,draw opacity=1 ]   (182.43,95.35) -- (201.48,130.67) ;
\draw [shift={(202.43,132.43)}, rotate = 241.66] [color={rgb, 255:red, 74; green, 144; blue, 226 }  ,draw opacity=1 ][line width=0.75]    (10.93,-3.29) .. controls (6.95,-1.4) and (3.31,-0.3) .. (0,0) .. controls (3.31,0.3) and (6.95,1.4) .. (10.93,3.29)   ;
\draw [color={rgb, 255:red, 74; green, 144; blue, 226 }  ,draw opacity=1 ]   (298.14,96.04) -- (268.15,173.13) ;
\draw [shift={(267.43,175)}, rotate = 291.26] [color={rgb, 255:red, 74; green, 144; blue, 226 }  ,draw opacity=1 ][line width=0.75]    (10.93,-3.29) .. controls (6.95,-1.4) and (3.31,-0.3) .. (0,0) .. controls (3.31,0.3) and (6.95,1.4) .. (10.93,3.29)   ;
\draw [color={rgb, 255:red, 74; green, 144; blue, 226 }  ,draw opacity=1 ]   (220.29,137.24) -- (288.61,92.33) ;
\draw [shift={(290.29,91.23)}, rotate = 146.69] [color={rgb, 255:red, 74; green, 144; blue, 226 }  ,draw opacity=1 ][line width=0.75]    (10.93,-3.29) .. controls (6.95,-1.4) and (3.31,-0.3) .. (0,0) .. controls (3.31,0.3) and (6.95,1.4) .. (10.93,3.29)   ;
\draw [color={rgb, 255:red, 189; green, 16; blue, 224 }  ,draw opacity=1 ]   (255.29,177.06) -- (218.28,147.41) ;
\draw [shift={(216.71,146.16)}, rotate = 38.7] [color={rgb, 255:red, 189; green, 16; blue, 224 }  ,draw opacity=1 ][line width=0.75]    (10.93,-3.29) .. controls (6.95,-1.4) and (3.31,-0.3) .. (0,0) .. controls (3.31,0.3) and (6.95,1.4) .. (10.93,3.29)   ;
\draw [color={rgb, 255:red, 189; green, 16; blue, 224 }  ,draw opacity=1 ]   (244.57,238.85) -- (259.61,196.79) ;
\draw [shift={(260.29,194.91)}, rotate = 109.68] [color={rgb, 255:red, 189; green, 16; blue, 224 }  ,draw opacity=1 ][line width=0.75]    (10.93,-3.29) .. controls (6.95,-1.4) and (3.31,-0.3) .. (0,0) .. controls (3.31,0.3) and (6.95,1.4) .. (10.93,3.29)   ;
\draw [color={rgb, 255:red, 74; green, 144; blue, 226 }  ,draw opacity=1 ]   (152.43,172.25) -- (174.01,98.64) ;
\draw [shift={(174.57,96.73)}, rotate = 106.34] [color={rgb, 255:red, 74; green, 144; blue, 226 }  ,draw opacity=1 ][line width=0.75]    (10.93,-3.29) .. controls (6.95,-1.4) and (3.31,-0.3) .. (0,0) .. controls (3.31,0.3) and (6.95,1.4) .. (10.93,3.29)   ;

\draw (301,64.3) node [anchor=north west][inner sep=0.75pt]  [font=\scriptsize] [align=left] {$\displaystyle 1$};
\draw (171,64.99) node [anchor=north west][inner sep=0.75pt]  [font=\scriptsize] [align=left] {$\displaystyle 1$};
\draw (342.43,180.34) node [anchor=north west][inner sep=0.75pt]  [font=\scriptsize] [align=left] {$\displaystyle 1$};
\draw (131.71,181.03) node [anchor=north west][inner sep=0.75pt]  [font=\scriptsize] [align=left] {$\displaystyle 1$};
\draw (183.86,128.35) node [anchor=north west][inner sep=0.75pt]  [font=\scriptsize] [align=left] {$\displaystyle z_{2}$};
\draw (209.21,176.3) node [anchor=north west][inner sep=0.75pt]  [font=\scriptsize] [align=left] {$\displaystyle z_{2} z_{3} -1$};
\draw (206.79,257.32) node [anchor=north west][inner sep=0.75pt]  [font=\scriptsize] [align=left] {$\displaystyle z_{2} z_{3} z_{4} -z_{2} -z_{4}$};
\draw (159.5,249.71) node [anchor=north west][inner sep=0.75pt]  [font=\scriptsize] [align=left] {$\displaystyle v_{1}$};
\draw (312,248.08) node [anchor=north west][inner sep=0.75pt]  [font=\scriptsize] [align=left] {$\displaystyle v_{5}$};
\draw (355.5,112.94) node [anchor=north west][inner sep=0.75pt]  [font=\scriptsize] [align=left] {$\displaystyle v_{4}$};
\draw (234,27.18) node [anchor=north west][inner sep=0.75pt]  [font=\scriptsize] [align=left] {$\displaystyle v_{3}$};
\draw (114.5,115.11) node [anchor=north west][inner sep=0.75pt]  [font=\scriptsize] [align=left] {$\displaystyle v_{2}$};

\end{tikzpicture}
    \vspace{-.5cm}
    \caption{The cluster chart $U_1$ of $X(\sigma^4)$. One of the five possible charts given by the triangulation of the pentagon.}
    \label{fig:sigma 4}
\end{figure}
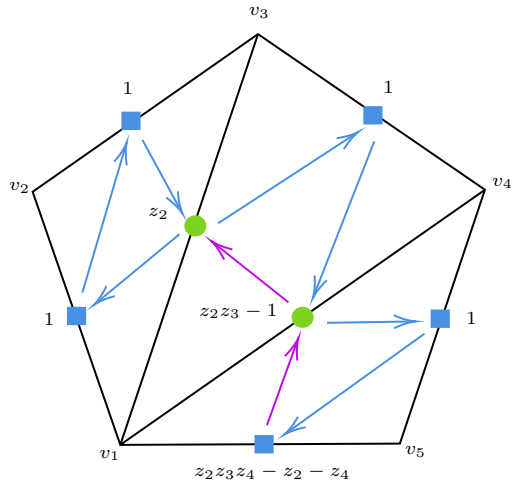
Here
\[U=\left\{w_1:=\Delta_{13}=z_2\neq0,w_2:=
\Delta_{14}=z_2z_3-1\neq0,w:=\Delta_{15}=z_2z_3z_4-z_4-z_2\neq0\right\}\]
Using the de Rham cohomology  \begin{align*}
        H^*(U)&=H^*((\C^*)^3)\\&=\left\langle 1,\frac{dw_1}{w_1},\frac{dw_2}{w_2},\frac{dw}{w},\frac{dw_1}{w_1}\wedge\frac{dw_2}{w_2},\frac{dw_1}{w_1}\wedge\frac{dw}{w},\frac{dw_2}{w_2}\wedge\frac{dw}{w},\frac{dw}{w}\wedge\frac{dw_2}{w_2}\wedge\frac{dw_1}{w_1}\right\rangle\end{align*}
        The forms which extend to $X(\sigma^4)$ are:
        \begin{itemize}
        \item $1$
        \item $\dfrac{dw}{w}=\dfrac{(z_3z_4-1)dz_2+z_2z_4dz_3+(z_2z_3-1)dz_4}{z_2z_3z_4-z_4-z_2}$
        \item $\dfrac{dw}{w}\wedge \dfrac{dw_2}{w_2}+\dfrac{dw_2}{w_2}\wedge \dfrac{dw_1}{w_1}=\dfrac{z_4dz_3\wedge dz_2+z_3dz_4\wedge dz_2+z_2dz_4\wedge dz_3}{z_2z_3z_4-z_4-z_2}$
        \item $\dfrac{dw}{w}\wedge \dfrac{dw_2}{w_2}\wedge \dfrac{dw_1}{w_1}=\dfrac{dz_4\wedge dz_3\wedge dz_2}{z_2z_3z_4-z_4-z_2}$
         \end{itemize}
         \vspace{.1cm}
         
         Therefore, $H^0(X(\sigma^4))=H^1(X(\sigma^4))=H^2(X(\sigma^4))=H^3(X(\sigma^4))=\C$ which agrees with Theorem \ref{thm: homology}.
\end{example}

\section{Performing cuts}

\subsection{Cuts for braid varieties}

In this section, we study various maps between braid varieties and positroid varieties. To work with such maps, it is useful to fix a specific isomorphism between $X(\sigma^k)$ and $\Pi_{2,k+1}^{\circ,1}$ which is given by lemmas below.

\begin{lemma}
\label{lem: matrix}
Let $M=\begin{pmatrix}
    v_1 &v_2 &\ldots &v_n
\end{pmatrix}\in \Pi_{2,n}^{\circ}$. There is a unique matrix $A\in GL(2,\C)$ such that \[AM=\begin{pmatrix} 1 &* &\ldots &0\\0 &1 &\ldots &*\end{pmatrix}=V\] where $\det A=\Delta_{12}^{-1}(M)$ and $\Delta_{ij}(V)=\Delta_{ij}(M)\cdot \det A=\dfrac{\Delta_{ij}(M)}{\Delta_{12}(M)}.$
\end{lemma}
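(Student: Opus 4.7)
My plan is to exploit the hypothesis $M\in\Pi_{2,n}^{\circ}$, which guarantees $\Delta_{12}=\det(v_1,v_2)\neq 0$, so that the first two columns of $M$ form a basis of $\C^2$. The target matrix $V$ in the statement has first column $e_1=(1,0)^{\top}$ and second column with second entry $1$ (in fact, it is most naturally read as having second column $e_2=(0,1)^{\top}$); either way, the problem reduces to finding the unique linear change of basis of $\C^2$ that sends $(v_1,v_2)$ to the standard basis.

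For existence, I set $N:=(v_1\mid v_2)\in GL(2,\C)$ and define $A:=N^{-1}$. By construction $Av_1=e_1$ and $Av_2=e_2$, so $AM$ has the prescribed form. For uniqueness, any $A'\in GL(2,\C)$ with $A'v_1=e_1$ and $A'v_2=e_2$ must satisfy $A'N=I_2$, forcing $A'=N^{-1}=A$.

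The determinant formula then follows immediately from $\det A=\det(N^{-1})=(\det N)^{-1}=\Delta_{12}^{-1}$. For the transformation of the Pl\"ucker coordinates, I use multilinearity and alternating-ness of the $2\times 2$ determinant:
$$
\Delta_{ij}'=\det(Av_i,Av_j)=\det(A)\cdot\det(v_i,v_j)=\frac{\Delta_{ij}}{\Delta_{12}},
$$
which gives both displayed equalities at once.

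I do not expect any real obstacle here; the content is elementary linear algebra once $\Delta_{12}\neq 0$ is invoked. The only mild subtlety is reading off the intended normalization from the schematic matrix $V$: the ``$*$'' entries are simply the resulting (unspecified) values, while the $1$'s and $0$'s on the first two columns encode the constraints $Av_1=e_1$ and $Av_2=e_2$ that pin $A$ down uniquely.
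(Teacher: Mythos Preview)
Your reading of the target form $V$ is off, and this makes your constructed $A$ wrong. The display
\[
V=\begin{pmatrix} 1 & * & \ldots & 0\\ 0 & 1 & \ldots & *\end{pmatrix}
\]
imposes \emph{three} normalization conditions: the first column is $e_1$, the second column has bottom entry $1$, and the \emph{last} column has top entry $0$. You only enforce the first two (indeed, you overdetermine the second column to be $e_2$) and ignore the last. With your choice $A=(v_1\mid v_2)^{-1}$ one has $Av_n=\bigl(-\Delta_{2n}/\Delta_{12},\ \Delta_{1n}/\Delta_{12}\bigr)^{\top}$, whose first entry is not zero in general, so $AM$ is not of the required shape. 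Your uniqueness argument likewise proves uniqueness for the wrong set of constraints.

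The paper's construction instead starts from the pair $(v_1,v_n)$: it first multiplies by $S=(v_1\mid v_n)^{-1}$, which forces the first column to $e_1$ and the last column to $e_2$, and then rescales the second row by $T=\mathrm{diag}(1,\alpha^{-1})$ with $\alpha=\Delta_{12}/\Delta_{1n}$ to make the $(2,2)$-entry equal to $1$. The resulting $A=TS$ then has $\det A=\Delta_{12}^{-1}$. Your determinant and Pl\"ucker computations are fine in the abstract (any $A\in GL_2$ rescales all $2\times 2$ minors by $\det A$), but they are attached to the wrong matrix $A$; to repair the proof you should either follow the $(v_1,v_n)$ route or, equivalently, solve directly for the unique $A$ satisfying $Av_1=e_1$, $(Av_2)_2=1$, and $(Av_n)_1=0$.
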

\begin{proof}
    If $M=\begin{pmatrix}
        v_1 &v_2&\ldots&v_n
    \end{pmatrix}$, then acting on the left with the matrix $S=\begin{pmatrix}
        v_1 &v_n
    \end{pmatrix}^{-1} $, we obtain \[S\cdot M=\frac{1}{\Delta_{1n}(M)}\begin{pmatrix}
        v_n^2&-v_n^1\\-v_1^2&v_1^1
    \end{pmatrix}\begin{pmatrix}
        v_1^1 &v_2^1&\dots &v_n^1\\v_1^2 &v_2^2&\dots &v_n^2
    \end{pmatrix}=\begin{pmatrix} 1 & *&\dots &0\\0&\alpha&\dots&1\end{pmatrix}\]
    where $\alpha=\det(S)\Delta_{12}(M)=\dfrac{\Delta_{12}(M)}{\Delta_{1n}(M)}$. Now, if we act on the left by $T=\begin{pmatrix}
        1&0\\0&\alpha^{-1}
    \end{pmatrix}$, we arrive at the matrix \[T\cdot(S\cdot M)=\begin{pmatrix}
        1&0\\0&\alpha^{-1}
    \end{pmatrix}\begin{pmatrix} 1 & *&\dots &0\\0&\alpha&\dots&1\end{pmatrix}=\begin{pmatrix} 1 &* &\ldots &0\\0 &1 &\ldots &\alpha^{-1}\end{pmatrix}\]
    Let $A=T\cdot S$, then $\det A=(\det T) (\det S)=\left(\dfrac{\Delta_{1n}(M)}{\Delta_{12}(M)}\right)\left(\dfrac{1}{\Delta_{1n}(M)}\right)=\Delta_{12}^{-1}(M)$.
\end{proof}

\begin{lemma}
\label{lem: rescale}
    Given the standard form matrix 
    \begin{equation}
    \label{eqn: std form}
        V=\begin{pmatrix}
        1 &* &\dots & 0\\ 0 & 1 &\dots &*
    \end{pmatrix}
    \end{equation}
    where $\Delta_{i,i+1}\neq 0, \, \Delta_{1n}\neq0$, we may rescale the vectors $(v_3,\ldots,v_n)$ to $(v'_3,\ldots,v'_n)=(\lambda_3 v_3,\ldots,\lambda_n v_n)$ such that $\Delta_{i,i+1}'=1$. Furthermore, such $\lambda_i$ are unique.
\end{lemma}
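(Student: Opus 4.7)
The plan is to solve directly for $\lambda_3,\ldots,\lambda_n$ by a simple recursion. Since the rescaling fixes $v_1$ and $v_2$, we have effectively $\lambda_1=\lambda_2=1$, so the condition $\Delta_{1,2}'=1$ is automatic: in the standard form \eqref{eqn: std form} we already have $\Delta_{1,2}=\det\bigl(\begin{smallmatrix}1&*\\ 0&1\end{smallmatrix}\bigr)=1$. For $i\ge 2$, the rescaling $v_i\mapsto\lambda_i v_i$ multiplies the consecutive minor by $\lambda_i\lambda_{i+1}$, so
\[
\Delta_{i,i+1}'=\det(\lambda_i v_i,\lambda_{i+1}v_{i+1})=\lambda_i\lambda_{i+1}\Delta_{i,i+1}.
\]
Imposing $\Delta_{i,i+1}'=1$ then yields the one-step recursion
\[
\lambda_{i+1}=\frac{1}{\lambda_i\,\Delta_{i,i+1}},\qquad i=2,\ldots,n-1,
\]
which, starting from $\lambda_2=1$ and using that each $\Delta_{i,i+1}\neq 0$ by hypothesis, is well-defined and uniquely determines $\lambda_3,\ldots,\lambda_n$. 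A trivial induction shows simultaneously that each $\lambda_j\neq 0$, so no division by zero occurs along the way.

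Next I would check that the resulting matrix $V'$ still has the shape \eqref{eqn: std form}: the column $v_1=(1,0)^{T}$ is untouched, and the last column, of shape $(0,\ast)^{T}$, becomes $(0,\lambda_n\ast)^{T}$, so the corner zero in position $(1,n)$ is preserved. Uniqueness of the tuple $(\lambda_3,\ldots,\lambda_n)$ is then automatic, since any other solution must satisfy the same recursion with the same initial value $\lambda_2=1$.

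There is essentially no obstacle here: the argument is a telescoping recursion, and the only point to confirm is the absence of division by zero, which follows directly from the nonvanishing of the consecutive minors. The auxiliary hypothesis $\Delta_{1n}\ne 0$ plays no role in the existence or uniqueness of the $\lambda_i$; it only ensures that the rescaled matrix $V'$ remains in $\Pi_{2,n}^{\circ,1}$, since $\Delta_{1n}'=\lambda_n\Delta_{1n}\ne 0$. This lemma is exactly what is needed to pair with Lemma \ref{lem: matrix} and produce the canonical representative used to identify $\Pi_{2,k+1}^{\circ,1}$ with $X(\sigma^k)$.
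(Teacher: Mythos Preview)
Your proof is correct and follows essentially the same approach as the paper: both solve for the $\lambda_i$ from the relation $\lambda_i\lambda_{i+1}\Delta_{i,i+1}=1$, with the paper writing down the closed-form solution $\lambda_k=\prod_{l=2}^{k-1}\Delta_{l,l+1}^{(-1)^{k-l}}$ while you present the equivalent one-step recursion. Your version is in fact slightly more complete, since you explicitly derive uniqueness from the recursion and note the (non-)role of the hypothesis $\Delta_{1n}\neq 0$, points the paper's proof leaves implicit.
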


\begin{proof}
    Let 
    \begin{align*}
        v_3'=\dfrac{v_3}{\Delta_{23}},\ 
        v_4'=\dfrac{v_4\cdot\Delta_{23}}{\Delta_{34}},\ \dots,\ 
        v_k'=v_k\displaystyle\prod_{l=2}^{k-1}\Delta_{l,l+1}^{(-1)^{k-l}}        
    \end{align*}
    Note that with the above rescaling $\Delta'_{1n}$ remains nonzero, whereas for $\Delta'_{i,i+1}$ the rescaling gives the desired result: 
    \begin{align*}
        \Delta_{i,i+1}'&=\det \begin{pmatrix}
        v_i' &v_{i+1}'
    \end{pmatrix}
    =\det \begin{pmatrix}
        v_i\displaystyle\prod_{l=2}^{i-1}\Delta_{l,l+1}^{(-1)^{i-l}} &v_{i+1}\displaystyle\prod _{l=2}^{i}\Delta_{l,l+1}^{(-1)^{i+1-l}}
    \end{pmatrix}\\
    &=\Delta_{i,i+1}\Delta_{i,i+1}^{(-1)}=1.
    \end{align*} 
\end{proof}

\begin{corollary}
Given a matrix $M\in \Pi_{2,n}^{\circ}$, we can use Lemmas \ref{lem: matrix} and \ref{lem: rescale} to change $M$ to the matrix
$$V'=\begin{pmatrix}
        1 &* &\dots & 0\\ 0 & 1 &\dots &*
    \end{pmatrix}$$
such that $V'\in \Pi_{2,n}^{\circ,1}$. Furthermore, if $M\in \Pi_{2,n}^{\circ,1}$ then $\Delta_{ij}(V')=\Delta_{ij}(M)$.
\end{corollary}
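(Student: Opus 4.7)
The plan is to compose the two preceding lemmas in sequence. First, I would apply Lemma \ref{lem: matrix} to $M$ to obtain a matrix $V = AM$ in the standard form \eqref{eqn: std form}, with $A \in GL(2,\mathbb{C})$ satisfying $\det A = \Delta_{12}(M)^{-1}$. Next, I would apply Lemma \ref{lem: rescale} to $V$, producing a matrix $V'$ whose columns $v_3,\ldots,v_n$ have been rescaled so that $\Delta_{i,i+1}(V') = 1$ for all $1 \le i \le n-1$. The rescaling only touches columns from $v_3$ onward, and since $v_n$ has first coordinate zero (which is preserved under scalar multiplication), the matrix $V'$ retains the standard form \eqref{eqn: std form}. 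Finally, the condition $\Delta_{1,n}(V') \neq 0$ holds because $\Delta_{1,n}(V') = \lambda_n\,\Delta_{1,n}(V)$, both factors of which are nonzero by hypothesis; the remaining minors stay nonzero for the same reason. This verifies $V' \in \Pi_{2,n}^{\circ,1}$.

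For the ``furthermore'' clause, I would observe that when $M \in \Pi_{2,n}^{\circ,1}$ we have $\Delta_{12}(M)=1$, which forces $\det A = 1$ in Lemma \ref{lem: matrix}, so $A \in SL(2,\mathbb{C})$. Since Pl\"ucker coordinates are $SL_2$-invariant, $\Delta_{ij}(V) = \Delta_{ij}(M)$ for all $i<j$, and in particular $\Delta_{i,i+1}(V) = 1$ for $1 \le i \le n-1$. Consequently the rescaling factors
$$\lambda_i = \prod_{l=2}^{i-1}\Delta_{l,l+1}(V)^{(-1)^{i-l}}$$
from Lemma \ref{lem: rescale} are all equal to $1$, so $V' = V$ and hence $\Delta_{ij}(V') = \Delta_{ij}(M)$ for all $i<j$, as claimed.

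The argument is a two-step chain of the lemmas already proved, so I do not anticipate a genuine obstacle. The only subtlety is verifying that the rescaling step of Lemma \ref{lem: rescale} disrupts neither the standard form \eqref{eqn: std form} nor the condition $\Delta_{1,n}(V')\neq 0$, both of which are checked directly as above. Uniqueness of $A$ (in Lemma \ref{lem: matrix}) and of the $\lambda_i$ (in Lemma \ref{lem: rescale}) then guarantees that $V'$ itself is uniquely determined by $M$.
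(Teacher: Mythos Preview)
Your proposal is correct and follows essentially the same approach as the paper: compose the two lemmas, and for the ``furthermore'' clause use that $\Delta_{12}(M)=1$ forces $\det A=1$ so all Pl\"ucker coordinates are preserved, after which the rescaling step is trivial. The paper phrases the preservation of minors via the formula $\Delta_{ij}(V')=\Delta_{ij}(M)/\Delta_{12}(M)$ from Lemma~\ref{lem: matrix} rather than invoking $SL_2$-invariance, but this is the same observation, and your version is slightly more detailed in verifying the first part.
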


\begin{proof}
We only need to prove the last equation.
If $M\in \Pi_{2,n}^{\circ,1}$ with each $\Delta_{i,i+1}=1$, using Lemma \ref{lem: matrix} there exists a unique $V'=AM$, and $\Delta_{ij}(V')=\Delta_{ij}(M)/\Delta_{12}(M)=\Delta_{ij}(M)$.
In particular, $\Delta_{i,i+1}(V')=1$ for all $i$ and we do not require the use of Lemma \ref{lem: rescale} to rescale the vectors. 
\end{proof}

Let $\mathcal{P}$ be the $(k+1)$-gon corresponding to the braid variety $X(\sigma^{k})$. We can choose a diagonal $D_{ij}$ which cuts the polygon $\mathcal{P}$ in two pieces, a $(j-i+1)$-gon $\mathcal{P}_1(i,j)$ and a $(k-j+i+2)$-gon $\mathcal{P}_2(i,j)$. These correspond to braid varieties $X(\sigma^{j-i})$ and $X(\sigma^{k-j+i+1})$, respectively. We will refer to this procedure as a diagonal cut. If we denote $a=j-i$ and $b=k-j+i+1$ then $a+b=k+1$.

\begin{theorem}
\label{thm: one cut}
    Performing one diagonal cut on $P$ along $D_{ij}$ defines an injective map \[      \Phi_{ij}:X(\sigma^{a})\times X(\sigma^{b})\longrightarrow X(\sigma^{a+b-1})\] and its image is the open subset $\{\Delta_{ij}\neq 0\}$ in $X(\sigma^{a+b-1})$.      
\end{theorem}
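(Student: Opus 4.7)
The plan is to work in the positroid picture via Lemma \ref{thm: iso}, identify the map $\Phi_{ij}$ with a gluing of two configurations of vectors along a shared diagonal, and construct a set-theoretic inverse on $\{\Delta_{ij}\neq 0\}$. This will give both the injectivity and the description of the image.

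For the forward map, given $(V_1,V_2)\in\Pi_{2,a+1}^{\circ,1}\times\Pi_{2,b+1}^{\circ,1}$ with columns $(u_1,\ldots,u_{a+1})$ and $(w_1,\ldots,w_{b+1})$, I construct the big configuration $(v_1,\ldots,v_{k+1})$ as follows. Outside the cut the vectors come from $V_2$: set $v_\ell=w_\ell$ for $\ell\le i$, and $v_{j+r}=\lambda^{(-1)^r}w_{i+1+r}$ for $0\le r\le k-j$, where $\lambda:=\Delta_{1,a+1}(V_1)$ and the alternating exponents are forced by the requirement $\det(v_\ell,v_{\ell+1})=1$. Between the endpoints of the diagonal the vectors come from $V_1$ after a single $SL(2,\C)$ transformation $T$ characterized by the matching conditions $Tu_1=w_i$ and $Tu_{a+1}=\lambda w_{i+1}$. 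Using $u_1=(1,0)$ the first column of $T$ is $w_i$, the second is determined by the second matching condition, and a direct calculation shows $\det T=\lambda/u_{a+1}^2=\lambda/\Delta_{1,a+1}(V_1)=1$ with our choice of $\lambda$. One then verifies $\det(v_\ell,v_{\ell+1})=1$ for every $\ell$: the ranges $\ell<i$ and $\ell>j$ are inherited from $V_1,V_2$, the interior range $i+1\le\ell\le j-2$ follows from $\det T=1$, and the two boundary sites $\ell=i$ and $\ell=j-1$ reduce after a short expansion to $\det(u_1,u_2)=1$ and $\det(u_a,u_{a+1})=1$. Computing the diagonal minor gives $\Delta_{ij}(\Phi_{ij}(V_1,V_2))=\det(w_i,\lambda w_{i+1})=\lambda\neq 0$, so the image lies in $\{\Delta_{ij}\neq 0\}\subset X(\sigma^{a+b-1})$.

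For the inverse, given $V\in X(\sigma^{a+b-1})\cong\Pi_{2,k+1}^{\circ,1}$ with $\Delta_{ij}(V)\neq 0$, I recover $V_1$ by restricting to the columns $v_i,\ldots,v_j$ and applying Lemma \ref{lem: matrix}: since all consecutive minors of the restriction are already $1$, the normalizing matrix has $\det A=1/\det(v_i,v_{i+1})=1$ and the result lies in $\Pi_{2,a+1}^{\circ,1}$ with $\Delta_{1,a+1}(V_1)=\Delta_{ij}(V)$. I recover $V_2$ by taking the columns $v_1,\ldots,v_i,v_j,\ldots,v_{k+1}$, whose consecutive minors are all $1$ except for $\det(v_i,v_j)=\Delta_{ij}(V)\neq 0$, and invoking Lemma \ref{lem: rescale}, which uniquely rescales this configuration into $\Pi_{2,b+1}^{\circ,1}$. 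A direct comparison shows that these two procedures undo each other, which simultaneously yields the injectivity of $\Phi_{ij}$ and identifies its image as $\{\Delta_{ij}\neq 0\}$.

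The main obstacle is the bookkeeping at the two boundary sites: after applying the linear transformation $T$ on one side and the alternating rescaling by powers of $\lambda$ on the other, one must have the gluing determinants $\det(v_i,v_{i+1})$ and $\det(v_{j-1},v_j)$ both equal to $1$. Both reduce after expansion to the $\Pi^{\circ,1}$ identities $\det(u_1,u_2)=\det(u_a,u_{a+1})=1$, but the reduction is exactly what forces the particular choice $\lambda=\Delta_{1,a+1}(V_1)$; choosing $\lambda$ differently would break one of the boundary equalities.
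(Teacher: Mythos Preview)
Your approach is essentially the same as the paper's: both work in the positroid model via Lemma~\ref{thm: iso}, build the big configuration by placing the outer polygon's vectors directly and inserting the inner polygon's vectors after an $SL(2)$ change of basis (the paper phrases this as ``row operations to make $v_i,v_j$ agree''), and invert by cutting and then rescaling the outer piece with the alternating powers of $\Delta_{ij}$. The paper presents the inverse first and your write-up presents the forward map first, but the content is the same.

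One small technical gap: in your inverse construction of $V_2$ you invoke Lemma~\ref{lem: rescale} on the tuple $(v_1,\ldots,v_i,v_j,\ldots,v_{k+1})$, but that lemma only rescales columns from the third onward and assumes the standard form \eqref{eqn: std form}. When $i=1$ the second column is $v_j$, so after Lemma~\ref{lem: rescale} the first consecutive minor is still $\Delta_{1j}(V)\neq 1$ and the output is not in $\Pi_{2,b+1}^{\circ,1}$. The paper handles this by treating $i=1$ as a separate case and applying Lemma~\ref{lem: matrix} first to normalize the second row before rescaling; you should do the same (or at least note that for $i=1$ one additionally rescales the second column by $\Delta_{1j}^{-1}$ before applying the alternating rescaling). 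This is a bookkeeping fix, not a structural issue with your argument.
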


\begin{proof}
  We use the isomorphism $\Pi_{2,k+1}^{\circ,1}\simeq X(\sigma^{k})$ from Theorem \ref{thm: iso}. We first describe the inverse map
  $$
  \Phi_{ij}^{-1}:\{\Delta_{ij}\neq 0\}\to X(\sigma^a)\times X(\sigma^b). 
  $$
        Let $V\in\Pi_{2,k+1}^{\circ,1}$ be a $2\times (k+1)$ matrix,  choose some $i,j$ such that $1\leq i< j\leq k+1$ where $(i,j)\ne(1,k+1)$, to perform the diagonal cut of the $(k+1)$-gon resulting in two polygons $\mathcal{P}_a$ and $\mathcal{P}_b$ where $\mathcal{P}_a$ is a $(j-i+1)$-gon and $\mathcal{P}_b$ is a $(k-j+i+2)$-gon. Assume that $\Delta_{ij}(V)\neq 0$. Then we can decompose the matrix $V$ into two matrices:
        \begin{align*}
            V_1&=\begin{pmatrix}
                v_i &\dots &v_j
            \end{pmatrix}\in \Mat(2,a+1)\\
            V_2&=\begin{pmatrix}
                v_1 &\dots &v_i &v_j &\dots &v_{k+1} 
            \end{pmatrix}\in \Mat(2,b+1)
        \end{align*}
        Let us prove that $V_1\in \Pi_{2,a+1}^{\circ,1}$. As it happens $\Delta_{m,m+1}(V_1)=\Delta_{m+i-1,m+i}(V)=1$ for $1\le m\le a$, and $\Delta_{1,a+1}(V_1)=\Delta_{ij}(V)\neq 0$.
        We use the isomorphism $\Pi_{2,a+1}^{\circ,1}\simeq X(\sigma^{a})$ from Theorem \ref{thm: iso} to obtain a point in $X(\sigma^{a})$ from $V_1$.

    Next, we study the matrix $V_2$. We have 
    $$
    \Delta_{m,m+1}(V_2)=
    \begin{cases}
    \Delta_{m,m+1}(V)=1 & \text{if}\ m<i\\
    \Delta_{ij}(V) & \text{if}\ m=i\\
    \Delta_{m+j-i-1,m+j-i}(V)=1  & \text{if}\ i<m\le k-j+i+1.
    \end{cases}
    $$
    Furthermore, $\Delta_{1,b+1}(V_2)=\Delta_{1,k+1}(V)\neq 0$, so $V_2\in \Pi_{2,b+1}^{\circ}$. We would like to use 
    Lemmas \ref{lem: matrix} and \ref{lem: rescale} to change $V_2$ to a different matrix $V'_2\in \Pi_{2,b+1}^{\circ,1}.$ We have two cases:

    {\bf Case 1:}  
    If $i=1$, then we first apply Lemma \ref{lem: matrix}. Since $S=(v_1\ v_{k+1})^{-1}$ is diagonal, we simply rescale the second row of $V_2$ by $\Delta_{1j}^{-1}$ to get $V_2$ to the form \eqref{eqn: std form}. Next, we apply Lemma \ref{lem: rescale} to rescale the vectors, and get $V_2'=(v_1,v'_2,\ldots,v'_{b+1})$ where
    $$
    v'_m=\begin{cases}
    (v^1_{m+j-2},v^2_{m+j-2}\Delta_{1j}^{-1}) & \text{if}\ m\ \text{is\ even}\\
    (v^1_{m+j-2}\Delta_{1j},v^2_{m+j-2}) & \text{if}\ m\ \text{is\ odd}.
    \end{cases}
    $$
    
    {\bf Case 2:} If $i\geq2$, then we do not need to apply Lemma \ref{lem: matrix}, we rescale the vectors $v_m$ for $m\geq j$. As a result, we get a matrix 
    $V_2'=(v_1,\ldots,v_i,v'_j,v'_{j+1},\ldots,v'_{k+1})$
    where
    $$
    v'_m=\Delta_{ij}^{(-1)^{m-j+1}}v_m.
    $$

    Now we can describe the desired map $\Phi_{ij}:X(\sigma^a)\times X(\sigma^b)\to \{\Delta_{ij}\neq 0\}$ as follows. Given two matrices $V_1\in \Pi_{2,a+1}^{\circ,1},V'_2\in \Pi_{2,b+1}^{\circ,1}$, we can read off $\Delta_{ij}(V)=\Delta_{1,a+1}(V_1)$ which is nonzero by assumption. The matrix $V'_2$ was obtained from $V_2$ above using multiplication by $\Delta_{ij}^{\pm 1}$, and hence is invertible, so given $V'_2$ and $\Delta_{ij}$ we can reconstruct $V_2$. 
    
    Now we can reconstruct $V$ by simply inserting $V_1$ into $V_2$. Note that if the vectors $v_i$ and $v_j$ from $V_1$ do not agree with the ones from $V_2$, we can always use row operations to make them agree since $\det(v_i\quad v_j)=\Delta_{ij}\neq 0$.

\end{proof}   

\begin{theorem}
The map $\Phi_{ij}$ defines a  quasi-equivalence of cluster varietes $\{\Delta_{ij}\neq 0\}\subset X(\sigma^{a+b-1})$ and $X(\sigma^{a})\times X(\sigma^{b})$. The latter has a cluster structure obtained by freezing $\Delta_{ij}$ in the cluster structure from $X(\sigma^{a+b-1})$.
\end{theorem}

\begin{proof}
We use the clusters in $X(\sigma^{a}), X(\sigma^{b})$ and $X(\sigma^{a+b-1})$ defined by the triangulation in Figure \ref{fig: 2-form cuts}. In particular, we get fan triangulations for $X(\sigma^{a}), X(\sigma^{b})$. 

By construction, all cluster variables corresponding to diagonals are multiplied by monomials in $\Delta_{ij}$, but we still need to check that the exchange ratios (as in Definition \ref{def: quasi}) are preserved. All diagonals above $\Delta_{ij}$ are unchanged, so we need to verify that the exchange ratios do not change for diagonals $\Delta_{1,m}$. For $m<i$, this is clear. For $m=i$, the exchange ratio is
$$
\frac{\Delta_{1j}}{\Delta_{ij}\Delta_{1,j-1}}=\frac{\Delta_{1j}\Delta_{ij}^{-1}}{1\cdot \Delta_{1,j-1}}.
$$
For $m=j$, the exchange ratio is
$$
\frac{\Delta_{ij}\Delta_{1,j+1}}{\Delta_{1i}}=\frac{1\cdot (\Delta_{1,j+1}\Delta_{ij})}{\Delta_{1i}}.
$$
Finally, for $m>j$ we get
$$
\frac{\Delta_{1,m+1}}{\Delta_{1,m-1}}=\frac{\Delta_{1,m+1}\Delta_{ij}^{(-1)^{m+1-j+1}}}{\Delta_{1,m-1}\Delta_{ij}^{(-1)^{m-1-j+1}}}
$$
since $m+1-j$ and $m-1-j$ have the same parity.
\end{proof}

Suppose $a+b+c-2=k$. We will study the associativity properties of our cuts along two non-intersecting diagonals $D_{ij}$ and $D_{i'j'}$, see Figure \ref{fig:two cuts}.  There are two general cases to consider when performing two cuts which we label as Type $A$ or Type $B$. The two cuts occur at $D_{ij}$ and $D_{i'j'}$ and will be denoted $\Phi_{ij}$ and $\Phi_{i'j'}$, respectively. Type $A$ cuts are diagonal cuts of the form $1\le i'\le i<j\le j'\le k+1$ given that the cuts do not degenerate to the one cut case, whereas, Type $B$ cuts are diagonal cuts of the form $1\le i<j\le i'<j'\le k+1$, see Figure \ref{fig:two cuts}.

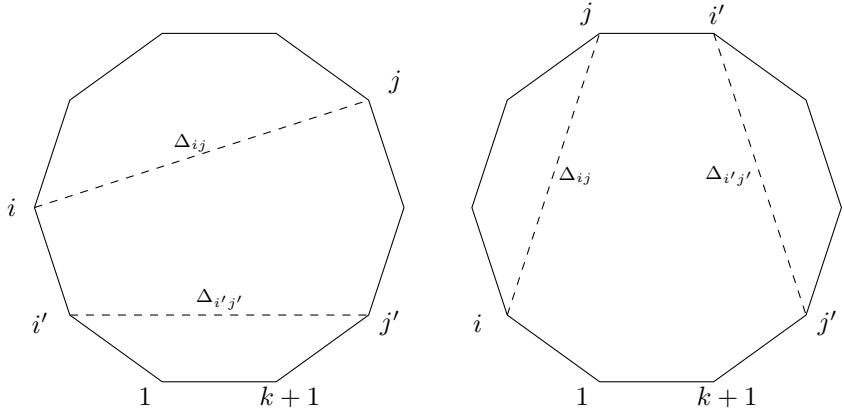
\begin{figure}
\label{fig: cut types}
    \begin{tabular}{c c c}
         \begin{tikzpicture}[scale=.75]
\begin{scope}[decoration={
    markings,
    mark=at position 0.5 with {\arrow{stealth}}}
    ] 
\draw (0,0)--(2,0)--(3.62,1.18)--(4.24,3.08)--(3.62,4.98)--(2,6.16)--(0,6.16)--(-1.62,4.98)--(-2.24,3.08)--(-1.62,1.18)--(0,0);
\draw [dashed] (-1.62,1.18)--(3.62,1.18);
\draw [dashed] (-2.24,3.08)--(3.62,4.98);
\draw (-2.14,1.04) node {$i'$};
\draw (-2.64,3.08) node {$i$};
\draw (4.09,5.3) node {$j$};
\draw (4,1.04) node {$j'$};
\draw (-0.3,-0.26) node {$1$};
\draw (2.24,-0.26) node {$k+1$};
\draw (0,6.5) node {};
\draw (0.5,4.22) node {\scriptsize $\Delta_{ij}$};
\draw (1.02,1.46) node {\scriptsize $\Delta_{i'j'}$};
\end{scope}
\end{tikzpicture}
&& \begin{tikzpicture}[scale=.75]
\begin{scope}[decoration={
    markings,
    mark=at position 0.5 with {\arrow{stealth}}}
    ] 
\draw (0,0)--(2,0)--(3.62,1.18)--(4.24,3.08)--(3.62,4.98)--(2,6.16)--(0,6.16)--(-1.62,4.98)--(-2.24,3.08)--(-1.62,1.18)--(0,0);
\draw [dashed] (-1.62,1.18)--(0,6.16);
\draw [dashed] (2,6.16)--(3.62,1.18);
\draw (-2.14,1.04) node {$i$};
\draw (-0.22,6.5) node {$j$};
\draw (2.06,6.5) node {$i'$};
\draw (4,1.04) node {$j'$};
\draw (-0.3,-0.26) node {$1$};
\draw (2.24,-0.26) node {$k+1$};
\draw (-0.42,3.64) node {\scriptsize $\Delta_{ij}$};
\draw (2.28,3.64) node {\scriptsize $\Delta_{i'j'}$};
\end{scope}
\end{tikzpicture}         
    \end{tabular}
         \caption{The possible cuts when performing two diagonal cuts, the dashed lines indicate these potential cuts. The polygon on the left depicts cuts of Type $A$ and the polygon on the right depicts cuts of Type $B$.}
         \label{fig:two cuts}
     \end{figure}

\begin{theorem}
\label{thm: type A cut}
For Type A cuts  we have a commutative diagram
            $$     
\begin{tikzcd}
	{X(\sigma^a)\times X(\sigma^b)\times X(\sigma^c)} && {X(\sigma^{a+b-1})\times X(\sigma^c)} \\
	\\
	{X(\sigma^a)\times X(\sigma^{b+c-1})} && {X(\sigma^{a+b+c-2})}
	\arrow["{\Phi_{ij}\times \Id}", from=1-1, to=1-3]
	\arrow["{\Id\times \Phi_{i'j'}}"', from=1-1, to=3-1]
	\arrow["{\Phi_{ij}}"', from=3-1, to=3-3]
	\arrow["{\Phi_{i'j'}}", from=1-3, to=3-3]
\end{tikzcd}
        $$
\end{theorem}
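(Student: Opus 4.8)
The plan is to carry out everything in the positroid model $X(\sigma^{m})\simeq\Pi_{2,m+1}^{\circ,1}$ of Lemma \ref{thm: iso}, where a point is a configuration $(v_1,\dots,v_{m+1})$ in $\C^2$ with $\det(v_p,v_{p+1})=1$ and $\det(v_1,v_{m+1})\neq0$, taken up to the left $SL(2,\C)$-action by row operations. Write $k=a+b+c-2$. For a Type $A$ pair of cuts $1\le i'\le i<j\le j'\le k+1$ the diagonal $D_{ij}$ lies inside the subpolygon bounded by $D_{i'j'}$, so the two cuts split $\mathcal P$ into three pieces carried on the vertex sets
$$
S_A=\{i,\dots,j\},\qquad S_B=\{i',\dots,i\}\cup\{j,\dots,j'\},\qquad S_C=\{1,\dots,i'\}\cup\{j',\dots,k+1\},
$$
which support copies of $X(\sigma^{a})$, $X(\sigma^{b})$, $X(\sigma^{c})$. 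The strategy is to show that the two composites in the diagram, $F_1=\Phi_{i'j'}\circ(\Phi_{ij}\times\Id)$ and $F_2=\Phi_{ij}\circ(\Id\times\Phi_{i'j'})$, are both inverse to one and the same morphism defined by restriction to $S_A,S_B,S_C$.

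First I would check that $F_1$ and $F_2$ are injective morphisms with the common image
$$
U=\{\Delta_{ij}\neq0\}\cap\{\Delta_{i'j'}\neq0\}\subseteq X(\sigma^{a+b+c-2}).
$$
Injectivity is immediate from Theorem \ref{thm: one cut}. For the image one applies Theorem \ref{thm: one cut} twice, using the description there of the inverse cuts: a cut along $D_{ij}$ rescales the relevant columns only by powers of $\Delta_{ij}$, so the vanishing loci of $\Delta_{ij}$ and $\Delta_{i'j'}$ are respected even though these coordinates themselves get multiplied by powers of one another; hence the two ``open subset'' conditions produced by the two applications of Theorem \ref{thm: one cut} become exactly $\Delta_{ij}\neq0$ and $\Delta_{i'j'}\neq0$ on $X(\sigma^{a+b+c-2})$.

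Next I would define a morphism $\Theta\colon U\to X(\sigma^{a})\times X(\sigma^{b})\times X(\sigma^{c})$ by restricting a representative configuration to $S_A$, $S_B$, $S_C$ and bringing each block to standard form via Lemma \ref{thm: iso} and the rescalings of Lemmas \ref{lem: matrix} and \ref{lem: rescale}: the $S_A$-block already has all consecutive minors $1$ and long minor $\Delta_{ij}\neq0$, while the $S_B$- and $S_C$-blocks have a single non-unit consecutive minor ($\Delta_{ij}$, resp.\ $\Delta_{i'j'}$) removed by the column rescalings of Lemma \ref{lem: rescale}. The heart of the argument is then $\Theta\circ F_1=\Theta\circ F_2=\Id$. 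Tracing either composite, one writes the columns of the glued configuration as the columns of the three input blocks, rescaled by explicit monomials in $\Delta_{ij},\Delta_{i'j'}$ with alternating-sign exponents coming from Case $2$ of the proof of Theorem \ref{thm: one cut}; applying $\Theta$ then restricts back to $S_A,S_B,S_C$ and renormalizes, and because in the nested Type $A$ situation $v_{i'}$ is untouched by the $D_{ij}$-cut while $v_{j'}$ is touched, the alternating exponents telescope to zero on each of the three blocks and the three original points are recovered; the same cancellation, together with the fact that the rescaling exponents depend only on the $SL(2,\C)$-invariant minors, shows the answer is independent of the order of the two cuts. Combined with the bijectivity of each $F_i$ onto $U$, the identities $\Theta\circ F_i=\Id$ give $\Theta=F_1^{-1}=F_2^{-1}$, hence $F_1=F_2$.

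The main obstacle is precisely this last computation: one must keep exact track of which columns each of the two cuts rescales and by which monomial in $\Delta_{ij}$ and $\Delta_{i'j'}$, and verify the telescoping of the alternating-sign exponents. This is exactly where Type $B$ differs — there one has $i<j\le i'<j'$ instead of $i'\le i<j\le j'$, the middle block abuts both cuts, and the cancellation fails by a net power of $\Delta_{ij}$, forcing the torus transformation $T_{\Delta_{ij}}$ in the companion statement. A little routine care is also needed for the degenerate boundary cases $i'=1$, $j'=k+1$, $i'=i$, or $j=j'$, where one of the $\Phi$'s uses Case $1$ rather than Case $2$ in the proof of Theorem \ref{thm: one cut}; the $\Theta$-argument goes through uniformly.
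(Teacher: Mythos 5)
Your plan is essentially the same as the paper's: work in the positroid model, trace the column rescalings produced by the two cuts, and check the alternating-sign exponents agree regardless of cut order. The one organizational difference is that you introduce a single ``restrict-and-renormalize'' map $\Theta$ on $U=\{\Delta_{ij}\neq0,\,\Delta_{i'j'}\neq0\}$ and show both composites invert it, whereas the paper computes the two composite inverses $\Phi_{i'j'}^{-1}\circ\Phi_{ij}^{-1}$ and $\Phi_{ij}^{-1}\circ\Phi_{i'j'}^{-1}$ explicitly and verifies the resulting rescaled triples $(V_1,V_4,V_6)$ and $(W_5,W_6,W_3)$ coincide; your packaging avoids the duplicated computation and makes the symmetry more transparent, but the computational content (and the key observation that in Type A only $v_{j'}$, not $v_{i'}$, is touched by the $D_{ij}$-rescaling) is identical to the paper's.
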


\begin{proof}
Let $V\in\Pi_{2,k+1}^{\circ,1}$ by Theorem \ref{thm: iso} $V$ corresponds to a point in $X(\sigma^k)$. 

For Type $A$ cuts, choose some $i,j,i',j'$ such that $1\le i'\le i<j<j'\le k$. Similar to Theorem \ref{thm: one cut} involving a single diagonal cut, we describe the inverse maps then produce the desired map. Here $a=j-i, b=j'-j+i-i'+1$ and $c=k-j'+i'+1$.
Define the matrix $V\in\Pi_{2,k+1}^{\circ,1}$ associated to $X(\sigma^k)$ as
    $$
    V=\begin{pmatrix}
        v_1 &\dots &v_{i'} &\dots &v_i &\dots &v_j &\dots &v_{j'} &\dots &v_{k+1}
    \end{pmatrix}$$
We will be dealing with minors in several different matrices, as such we will include the matrices in the notations.

    (i) First, we consider the case where we cut at along $\Delta_{ij}(V)$ then $\Delta_{i'j'}(V)$ which is described in Figure \ref{fig: Type A Delta ij} by 
    $$
    X(\sigma^{a+b+c-2})\xrightarrow{\Phi^{-1}_{ij}}X(\sigma^a)\times X(\sigma^{b+c-1})\xrightarrow{\Id\times \Phi^{-1}_{i'j'}}X(\sigma^a)\times X(\sigma^b)\times X(\sigma^c)
    $$

    By performing the initial cut $\Delta_{ij}(V)$, given by $\Phi^{-1}_{ij}:X(\sigma^{a+b+c-2})\rightarrow X(\sigma^a)\times X(\sigma^{b+c-1})$, we decompose the matrix $V$ into the two following matrices      
    $$
        V_1=\begin{pmatrix}
            v_i &\dots &v_j
        \end{pmatrix}\in \Mat(2,a+1)
    $$
    $$
        V_2=\begin{pmatrix}
            v_1 &\dots &v_{i'} &\dots &v_i &v_j &\dots &v_{j'} &\dots &v_{k+1}
        \end{pmatrix}\in \Mat(2,b+c)
    $$

    Similar to the argument in Theorem \ref{thm: one cut}, $\Delta_{ij}(V)\ne 0$ and we find that $V_1\in\Pi_{2,a+1}^{\circ,1}\simeq X(\sigma^a)$. Here, the rescaling of vectors $v_m$ for $m\ge j$ in 
    $$V_3=\begin{pmatrix} v_1 &\dots &v_{i'} &\dots &v_i &v'_j &\dots &v'_{j'} &\dots &v'_{k+1}
    \end{pmatrix}
    $$ 
    is given by
    \begin{equation}
    \label{eq: v' j to j'}
        v_m'=v_m\Delta_{ij}(V)^{(-1)^{m-j+1}}
    \end{equation}
        
    Therefore, $V_3\in\Pi_{2,b+c}^{\circ,1}\simeq X(\sigma^{b+c-1})$ and $\Phi_{ij}^{-1}$ is well-defined. Note that during the rescaling of the matrix $V_2$ into $V_3$ the minors of $V_3$ also experience rescaling by a factor of $\Delta_{ij}(V)$, hence given that $v'_{j'}=v_{j'}\Delta_{ij}(V)^{(-1)^{j'-j+1}}$
    \begin{equation}
    \label{eq: scaled delta i'j'}
        \Delta_{i'j'}(V_3)=\Delta_{i'j'}(V)\Delta_{ij}(V)^{(-1)^{j'-j+1}}
    \end{equation}

    Now we perform the second cut $\Delta_{i'j'}(V)$ given by the map $X(\sigma^a)\times X(\sigma^{b+c-1})\xrightarrow{\Id\times \Phi_{i'j'}} X(\sigma^a)\times X(\sigma^b)\times X(\sigma^c)$. The matrix $V_1$ remains unchanged whereas $V_3$ decomposes into
    $$
        V_4=\begin{pmatrix}
            v_{i'} &\dots &v_i &v'_j &\dots &v'_{j'}
        \end{pmatrix}\in\Mat(2,b+1)
    $$
    $$
        V_5=\begin{pmatrix}
            v_1 &\dots &v_{i'} &v'_{j'} &\dots &v'_{k+1}
        \end{pmatrix}\in\Mat(2,c+1)
    $$

    By the rescaling of matrix $V_3$ in the previous cutting and $\Delta_{i'j'}(V_3)\ne 0$, then $V_4\in \Pi_{2,b+1}^{\circ,1}\simeq X(\sigma^b)$. After performing the second cut there is again a rescaling, this time of the matrix $V_5$ which is given by the new matrix 
    $$
    V_6=\begin{pmatrix}
        v_1 &\dots &v_{i'} &v_{j'}'' &\dots &v_{k+1}''
    \end{pmatrix}
    $$ 
    where for $m\ge j'$ the vectors are $$v_m''=v'_m\Delta_{i'j'}(V_3)^{(-1)^{m-j'+1}}=v_m\Delta_{ij}(V)^{(-1)^{m-j+1}}\Delta_{i'j'}(V_3)^{(-1)^{m-j'+1}}.$$ 
    Given that
    \begin{align*}
        \Delta_{i'j'}(V_3)^{(-1)^{m-j'+1}}&=\Delta_{i'j'}(V)^{(-1)^{m-j'+1}}\Delta_{ij}(V)^{(-1)^{j'-j+1}(-1)^{m-j'+1}}\\&=\Delta_{i'j'}(V)^{(-1)^{m-j'+1}}\Delta_{ij}(V)^{(-1)^{m-j}}
    \end{align*} 
    and $(-1)^{m-j+1}+(-1)^{m-j}=0$ we conclude that
    \begin{equation}
    \label{eq: v'' larger than j'}
    v_m''=v_m\Delta_{ij}(V)^{(-1)^{m-j+1}}\Delta_{i'j'}(V)^{(-1)^{m-j'+1}}\Delta_{ij}(V)^{(-1)^{m-j}}=v_m\Delta_{i'j'}(V)^{(-1)^{m-j'+1}}.
    \end{equation}
    As such $V_6\in\Pi_{2,c+1}^{\circ,1}\simeq X(\sigma^c)$. This concludes the construction of the inverse map. 

    To construct the desired map $$\Phi_{ij}\circ(\Id\times\Phi_{i'j'}):X(\sigma^a)\times X(\sigma^b)\times X(\sigma^c)\rightarrow X(\sigma^{a+b+c-2})$$
    We reconstruct $V$ by taking $V_1\in\Pi_{2,a+1}^{\circ,1},\, V_4\in\Pi_{2,b+1}^{\circ,1},\,V_6\in\Pi_{2,c+1}^{\circ,1}$. We can read off $\Delta_{i'j'}(V)=\Delta_{1,b+1}(V_4)$ which is nonzero by assumption. The matrix $V_5$ is obtained from $V_6$ by multiplication of $\Delta_{i'j'}(V)^{\pm1}$ to the vectors $v_l$ for $l\ge i'+1$, which is well-defined since $\Delta_{i'j'}(V)$ is invertible. We reconstruct the matrix $V_3\in \Mat(2,b+c)$ by inserting the matrix $V_4$ into $V_5$ in the appropriate location. Furthermore, $V_3\in\Pi_{2,b+c}^{\circ,1}\simeq X(\sigma^{b+c-1})$ by construction. This concludes the construction of the map 
    $$
    X(\sigma^a)\times X(\sigma^b)\times X(\sigma^c)\xrightarrow{\Id\times\Phi_{i'j'}}X(\sigma^a)\times X(\sigma^{b+c-1})
    $$ 

    Continuing the construction of the desired map, we read off $\Delta_{ij}(V)=\Delta_{1,a+1}(V_1)$ which is again nonzero by assumption. The matrix $V_2$ is obtained from $V_3$ by multiplication of $\Delta_{ij}(V)^{\pm1}$ to the vectors $v_l$ for $l\ge i+1$. We reconstruct $V$ by inserting $V_1$ into $V_2$ at the appropriate location, completing the construction of the map 
    $$
    X(\sigma^a)\times X(\sigma^{b+c-1})\xrightarrow{\Phi_{ij}}X(\sigma^{a+b+c-2})
    $$ and producing the desired map.

    (ii) Now, for the case where we cut along $\Delta_{i'j'}(V)$ then $\Delta_{ij}(V)$, described in Figure \ref{fig: Type_A_Delta_ij2} by 
    $$
    X(\sigma^a)\times X(\sigma^b)\times X(\sigma^c)\xrightarrow{\Phi_{i'j'}^{-1}} X(\sigma^{a+b-1})\times X(\sigma^c)\xrightarrow{\Phi_{ij}^{-1}\times\Id} X(\sigma^{a+b+c-2}).$$

    Perform the initial cut $\Delta_{i'j'}(V)$, to decompose $V$ into the matrices
    $$
        W_1=\begin{pmatrix}
            v_1 &\dots &v_{i'} &v_{j'} &\dots &v_{k+1}
        \end{pmatrix}\in\Mat(2,c+1)
    $$
    $$
        W_2=\begin{pmatrix}
            v_{i'} &\dots &v_i &\dots &v_j &\dots &v_{j'}
        \end{pmatrix}\in\Mat(2,a+b)
    $$

    By the same argument as in Theorem \ref{thm: one cut} $\Delta_{i'j'}\ne0$ and $W_2\in\Pi_{2,a+b}^{\circ,1}\simeq X(\sigma^{a+b-1})$. Now, the matrix $W_1$ requires rescaling of the vectors $v_m$ for $m\ge j'$, producing the matrix 
    $$W_3=\begin{pmatrix}
        v_1 &\dots &v_{i'} &v'_{j'} &\dots &v'_{k+1}
    \end{pmatrix}
    $$ 
    here 
    $$
    v'_m=v_m\Delta_{i'j'}(V)^{(-1)^{m-j'+1}}
    $$    
    which is in agreement with \eqref{eq: v'' larger than j'}. Hence, $W_3\in\Pi_{2,c+1}^{\circ,1}\simeq X(\sigma^c)$.

    We perform the second cut $\Delta_{ij}(V)$, which separates $W_2$ into 
    $$
    W_4=\begin{pmatrix}
            v_{i'} &\dots &v_i &v_j &\dots &v_{j'}
        \end{pmatrix}\in \Mat(2,b+1)
    $$
    $$
        W_5=\begin{pmatrix}
            v_i &\dots &v_j
        \end{pmatrix}\in\Mat(2,a+1)
    $$
    In this case, $W_5\in\Pi_{2,a+1}^{\circ,1}\simeq X(\sigma^a)$, whereas the matrix $W_4\in\Pi_{2,b+1}^{\circ}$ requires a rescaling for the vectors $v_m$ for $j\le m\le j'$. Let $$W_6=\begin{pmatrix}
        v_{i'} &\dots &v_i&v''_j&\dots &v''_{j'}
    \end{pmatrix}
    $$ with the vectors
    $$
    v''_m=v_m\Delta_{ij}(V)^{m-j+1}
    $$
    which agrees with \eqref{eq: v' j to j'}. Therefore, $W_6\in\Pi_{2,b+1}^{\circ,1}\simeq X(\sigma^b)$, completing the construction of the inverse maps.

    Finally, we construct the desired map 
    $$
    \Phi_{i'j'}\circ(\Phi_{ij}\times \Id):X(\sigma^a)\times X(\sigma^b)\times X(\sigma^c)\rightarrow X(\sigma^{a+b+c-2})
    $$
    We reconstruct $V$ by taking $W_5\in\Pi_{2,a+1}^{\circ,1},\,W_6\in\Pi_{2,b+1}^{\circ,1},\,W_3\in\Pi_{2,c+1}^{\circ,1}$. We read off $\Delta_{ij}(V)=\Delta_{1,a+1}(W_5)$ which is nonzero by assumption. The matrix $W_4$ is recovered from $W_6$ by multiplication of $\Delta_{ij}^{\pm1}$ to the vectors $v_l$ for $l\ge i-i'+1$, which is well-defined since $\Delta_{ij}$ is invertible. We reconstruct $W_2\in\Pi_{2,a+b}^{\circ,1}\simeq X(\sigma^{a+b-1})$ by inserting the matrix $W_5$ into $W_4$ in the appropriate position. Concluding the construction of the map
    $$
    X(\sigma^a)\times X(\sigma^b)\times X(\sigma^c)\xrightarrow{\Phi_{ij}\times\Id}X(\sigma^{a+b-1})\times X(\sigma^c)$$
    To complete the construction, we read off $\Delta_{i'j'}(v)=\Delta_{1,c+1}(W_3)$ which is also nonzero by construction. The matrix $W_1\in\Pi_{1,c+1}^{\circ,1}\simeq X(\sigma^c)$ is recovered from the matrix $W_3$ by multiplication of $\Delta_{i'j'}(V)^{\pm1}$ to the vectors $v_l$ for $l\ge i'+1$. We reconstruct $V$ by inserting $W_1$ into $W_2$ at the appropriate location, concluding the construction of the map 
    $$
    X(\sigma^{a+b-1})\times X(\sigma^c)\xrightarrow{\Phi_{i'j'}}X(\sigma^{a+b+c-2})$$
    which produces the desired map showing associativity of Type A cuts.
        
\end{proof}

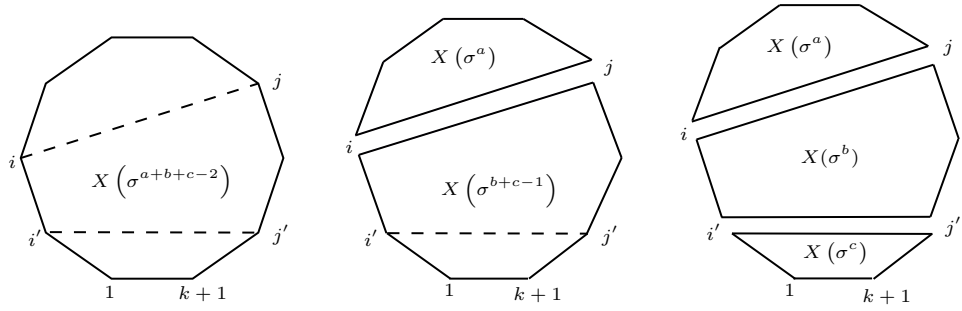
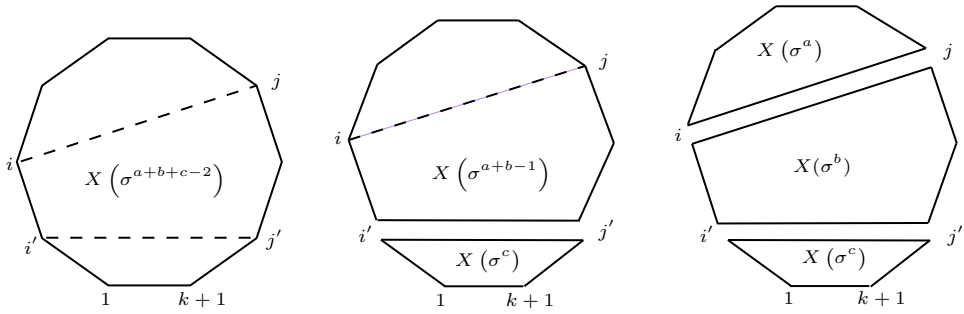
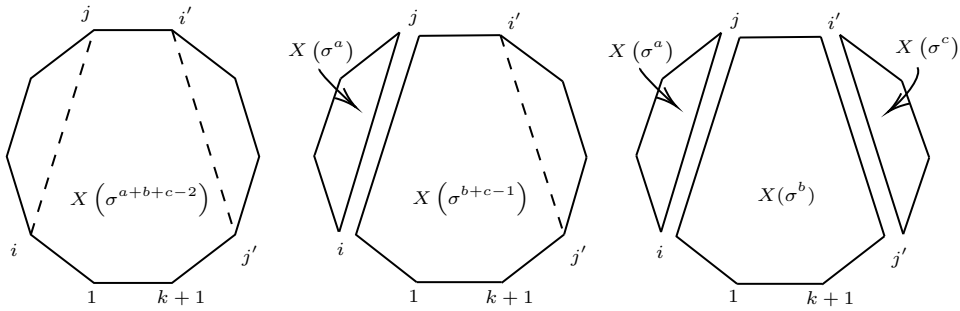
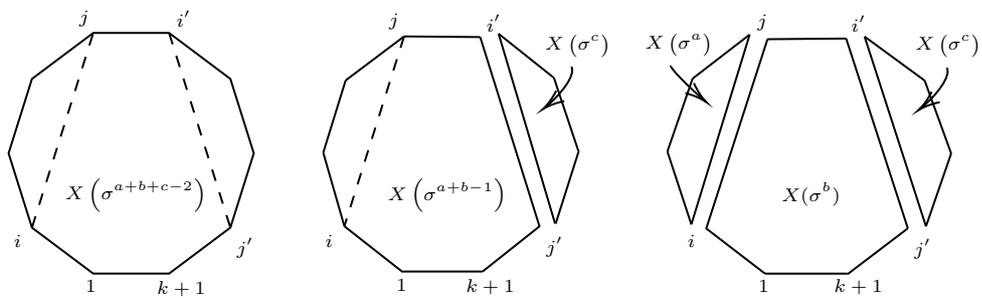
\begin{figure}
        \begin{subfigure}{\textwidth}
            \centering
            \tikzset{every picture/.style={line width=0.75pt}} 

\begin{tikzpicture}[x=0.75pt,y=0.75pt,yscale=-1,xscale=1]

\draw   (143.57,81.23) -- (131.06,118.74) -- (98.3,141.92) -- (57.81,141.92) -- (25.05,118.74) -- (12.54,81.23) -- (25.05,43.73) -- (57.81,20.55) -- (98.3,20.55) -- (131.06,43.73) -- cycle ;
\draw  [dash pattern={on 4.5pt off 4.5pt}]  (12.54,81.23) -- (131.06,43.73) ;
\draw  [dash pattern={on 4.5pt off 4.5pt}]  (27.07,118.49) -- (131.06,118.74) ;
\draw [fill={rgb, 255:red, 183; green, 134; blue, 225 }  ,fill opacity=1 ]   (179.45,69.86) -- (297.22,31.9) ;
\draw [fill={rgb, 255:red, 183; green, 134; blue, 225 }  ,fill opacity=1 ]   (179.7,69.86) -- (193.41,32.89) ;
\draw [fill={rgb, 255:red, 183; green, 134; blue, 225 }  ,fill opacity=1 ]   (193.41,32.89) -- (223.29,11.23) ;
\draw [fill={rgb, 255:red, 183; green, 134; blue, 225 }  ,fill opacity=1 ]   (223.29,11.23) -- (263.29,11.23) ;
\draw [fill={rgb, 255:red, 183; green, 134; blue, 225 }  ,fill opacity=1 ]   (263.29,11.23) -- (297.48,31.9) ;
\draw    (349.68,71.91) -- (362.34,111.03) ;
\draw    (349.68,71.91) -- (467.93,34.26) ;
\draw    (467.93,34.26) -- (480.59,71.66) ;
\draw    (480.33,71.66) -- (466.41,110.79) ;
\draw    (362.09,111.03) -- (466.41,110.79) ;
\draw    (195.22,119.01) -- (226.87,141.89) ;
\draw  [dash pattern={on 4.5pt off 4.5pt}]  (195.48,119.01) -- (208.65,119.04) -- (295.49,119.26) ;
\draw    (226.37,141.89) -- (265.61,141.89) ;
\draw    (266.12,141.89) -- (295.24,119.26) ;
\draw    (194.97,119.01) -- (181.17,79.64) ;
\draw [fill={rgb, 255:red, 183; green, 134; blue, 225 }  ,fill opacity=1 ]   (181.17,79.64) -- (298.15,43.22) ;
\draw [fill={rgb, 255:red, 183; green, 134; blue, 225 }  ,fill opacity=1 ]   (298.15,43.22) -- (312.33,81.12) ;
\draw    (294.99,119.26) -- (312.33,81.12) ;
\draw    (347.37,62.92) -- (465.14,24.97) ;
\draw    (347.62,62.37) -- (361.33,25.4) ;
\draw    (361.33,25.95) -- (391.21,4.3) ;
\draw    (391.21,4.3) -- (431.21,4.3) ;
\draw    (431.21,4.3) -- (465.4,24.97) ;
\draw    (367.4,119.15) -- (399.05,142.04) ;
\draw    (367.4,119.15) -- (380.57,119.18) -- (467.42,119.4) ;
\draw    (398.04,141.79) -- (437.29,141.79) ;
\draw    (438.05,142.04) -- (467.17,119.4) ;

\draw (5,78.65) node [anchor=north west][inner sep=0.75pt]  [font=\scriptsize] [align=left] {$\displaystyle i$};
\draw (173.04,71.19) node [anchor=north west][inner sep=0.75pt]  [font=\scriptsize] [align=left] {$\displaystyle i$};
\draw (14.76,115.8) node [anchor=north west][inner sep=0.75pt]  [font=\scriptsize] [align=left] {$\displaystyle i'$};
\draw (339.97,64.64) node [anchor=north west][inner sep=0.75pt]  [font=\scriptsize] [align=left] {$\displaystyle i$};
\draw (353.24,112.72) node [anchor=north west][inner sep=0.75pt]  [font=\scriptsize] [align=left] {$\displaystyle i'$};
\draw (182.4,115.77) node [anchor=north west][inner sep=0.75pt]  [font=\scriptsize] [align=left] {$\displaystyle i'$};
\draw (136.86,37.13) node [anchor=north west][inner sep=0.75pt]  [font=\scriptsize] [align=left] {$\displaystyle j$};
\draw (302.17,30.11) node [anchor=north west][inner sep=0.75pt]  [font=\scriptsize] [align=left] {$\displaystyle j$};
\draw (471.44,20.06) node [anchor=north west][inner sep=0.75pt]  [font=\scriptsize] [align=left] {$\displaystyle j$};
\draw (136.86,113.18) node [anchor=north west][inner sep=0.75pt]  [font=\scriptsize] [align=left] {$\displaystyle j'$};
\draw (301,113.59) node [anchor=north west][inner sep=0.75pt]  [font=\scriptsize] [align=left] {$\displaystyle j'$};
\draw (472.22,109.22) node [anchor=north west][inner sep=0.75pt]  [font=\scriptsize] [align=left] {$\displaystyle j'$};
\draw (46.2,83.45) node [anchor=north west][inner sep=0.75pt]  [font=\scriptsize] [align=left] {$\displaystyle X\left( \sigma ^{a+b+c-2}\right)$};
\draw (215.48,24.05) node [anchor=north west][inner sep=0.75pt]  [font=\scriptsize] [align=left] {$\displaystyle X\left( \sigma ^{a}\right)$};
\draw (383.09,18.37) node [anchor=north west][inner sep=0.75pt]  [font=\scriptsize] [align=left] {$\displaystyle X\left( \sigma ^{a}\right)$};
\draw (220.77,87.79) node [anchor=north west][inner sep=0.75pt]  [font=\scriptsize] [align=left] {$\displaystyle X\left( \sigma ^{b+c-1}\right)$};
\draw (400.3,72.8) node [anchor=north west][inner sep=0.75pt]  [font=\scriptsize] [align=left] {$\displaystyle X( \sigma ^{b}$)};
\draw (401.33,122.11) node [anchor=north west][inner sep=0.75pt]  [font=\scriptsize] [align=left] {$\displaystyle X\left( \sigma ^{c}\right)$};
\draw (392.01,142.88) node [anchor=north west][inner sep=0.75pt]  [font=\scriptsize] [align=left] {$\displaystyle 1$};
\draw (222.24,143.31) node [anchor=north west][inner sep=0.75pt]  [font=\scriptsize] [align=left] {$\displaystyle 1$};
\draw (52.86,144.21) node [anchor=north west][inner sep=0.75pt]  [font=\scriptsize] [align=left] {$\displaystyle 1$};
\draw (430.97,145.5) node [anchor=north west][inner sep=0.75pt]  [font=\scriptsize] [align=left] {$\displaystyle k+1$};
\draw (256.88,144.62) node [anchor=north west][inner sep=0.75pt]  [font=\scriptsize] [align=left] {$\displaystyle k+1$};
\draw (89.67,144.21) node [anchor=north west][inner sep=0.75pt]  [font=\scriptsize] [align=left] {$\displaystyle k+1$};

\end{tikzpicture}
            \vspace{-.5cm}
            \caption{Type A: Initial cut at $\Delta_{ij}$ followed by $\Delta_{i'j'}$.}
            \vspace{1cm}
            \label{fig: Type A Delta ij}
        \end{subfigure}
        \begin{subfigure}{\textwidth}
            \centering
            \tikzset{every picture/.style={line width=0.75pt}} 

\begin{tikzpicture}[x=0.75pt,y=0.75pt,yscale=-1,xscale=1]

\draw   (148.91,89.73) -- (136.28,128.13) -- (103.22,151.86) -- (62.36,151.86) -- (29.3,128.13) -- (16.68,89.73) -- (29.3,51.34) -- (62.36,27.61) -- (103.22,27.61) -- (136.28,51.34) -- cycle ;
\draw  [dash pattern={on 4.5pt off 4.5pt}]  (18.72,89.48) -- (136.28,51.34) ;
\draw  [dash pattern={on 4.5pt off 4.5pt}]  (31.35,127.88) -- (138.33,127.88) ;
\draw [fill={rgb, 255:red, 183; green, 134; blue, 225 }  ,fill opacity=1 ]   (182.21,78.74) -- (196.53,40.72) ;
\draw [fill={rgb, 255:red, 183; green, 134; blue, 225 }  ,fill opacity=1 ]   (196.53,40.72) -- (226.68,18.56) ;
\draw [fill={rgb, 255:red, 183; green, 134; blue, 225 }  ,fill opacity=1 ]   (226.68,18.56) -- (267.05,18.56) ;
\draw [fill={rgb, 255:red, 183; green, 134; blue, 225 }  ,fill opacity=1 ]   (267.05,18.56) -- (300.26,41.46) ;
\draw    (353.59,80.66) -- (366.36,120.71) ;
\draw    (353.59,80.66) -- (472.92,42.12) ;
\draw    (472.92,42.12) -- (485.69,80.4) ;
\draw    (485.44,80.4) -- (471.38,120.46) ;
\draw    (366.11,120.71) -- (471.38,120.46) ;
\draw    (198.36,128.89) -- (230.3,152.31) ;
\draw    (196.65,119.04) -- (209.94,119.08) -- (297.58,119.29) ;
\draw    (229.79,152.31) -- (269.39,152.31) ;
\draw    (269.9,152.31) -- (299.29,129.14) ;
\draw    (196.13,119.04) -- (182.21,78.74) ;
\draw [fill={rgb, 255:red, 183; green, 134; blue, 225 }  ,fill opacity=1 ] [dash pattern={on 4.5pt off 4.5pt}]  (182.21,78.74) -- (300.26,41.46) ;
\draw [fill={rgb, 255:red, 183; green, 134; blue, 225 }  ,fill opacity=1 ]   (300.26,41.46) -- (314.57,80.25) ;
\draw    (297.07,119.29) -- (314.57,80.25) ;
\draw    (351.26,71.46) -- (470.11,32.61) ;
\draw    (351.51,70.9) -- (365.34,33.05) ;
\draw    (365.34,33.61) -- (395.49,11.45) ;
\draw    (395.49,11.45) -- (435.87,11.45) ;
\draw    (435.87,11.45) -- (470.36,32.61) ;
\draw    (371.47,129.02) -- (403.41,152.45) ;
\draw    (371.47,129.02) -- (384.76,129.05) -- (472.41,129.27) ;
\draw    (402.39,152.19) -- (442,152.19) ;
\draw    (442.76,152.45) -- (472.15,129.27) ;
\draw    (299.29,129.14) -- (198.36,128.89) ;

\draw (343.28,71.38) node [anchor=north west][inner sep=0.75pt]  [font=\scriptsize] [align=left] {$\displaystyle i$};
\draw (173.49,73.17) node [anchor=north west][inner sep=0.75pt]  [font=\scriptsize] [align=left] {$\displaystyle i$};
\draw (9.75,86.14) node [anchor=north west][inner sep=0.75pt]  [font=\scriptsize] [align=left] {$\displaystyle i$};
\draw (355.88,120.15) node [anchor=north west][inner sep=0.75pt]  [font=\scriptsize] [align=left] {$\displaystyle i'$};
\draw (18.8,125.96) node [anchor=north west][inner sep=0.75pt]  [font=\scriptsize] [align=left] {$\displaystyle i'$};
\draw (185.7,120.15) node [anchor=north west][inner sep=0.75pt]  [font=\scriptsize] [align=left] {$\displaystyle i'$};
\draw (477.53,28.87) node [anchor=north west][inner sep=0.75pt]  [font=\scriptsize] [align=left] {$\displaystyle j$};
\draw (305.77,33.79) node [anchor=north west][inner sep=0.75pt]  [font=\scriptsize] [align=left] {$\displaystyle j$};
\draw (142.42,43.63) node [anchor=north west][inner sep=0.75pt]  [font=\scriptsize] [align=left] {$\displaystyle j$};
\draw (479.5,120.15) node [anchor=north west][inner sep=0.75pt]  [font=\scriptsize] [align=left] {$\displaystyle j'$};
\draw (305.38,117.46) node [anchor=north west][inner sep=0.75pt]  [font=\scriptsize] [align=left] {$\displaystyle j'$};
\draw (139.66,122.38) node [anchor=north west][inner sep=0.75pt]  [font=\scriptsize] [align=left] {$\displaystyle j'$};
\draw (48.8,89.26) node [anchor=north west][inner sep=0.75pt]  [font=\scriptsize] [align=left] {$\displaystyle X\left( \sigma ^{a+b+c-2}\right)$};
\draw (233.96,132.42) node [anchor=north west][inner sep=0.75pt]  [font=\scriptsize] [align=left] {$\displaystyle X\left( \sigma ^{c}\right)$};
\draw (222.26,86.18) node [anchor=north west][inner sep=0.75pt]  [font=\scriptsize] [align=left] {$\displaystyle X\left( \sigma ^{a+b-1}\right)$};
\draw (384.2,26.89) node [anchor=north west][inner sep=0.75pt]  [font=\scriptsize] [align=left] {$\displaystyle X\left( \sigma ^{a}\right)$};
\draw (406.77,131.3) node [anchor=north west][inner sep=0.75pt]  [font=\scriptsize] [align=left] {$\displaystyle X\left( \sigma ^{c}\right)$};
\draw (402.46,84.15) node [anchor=north west][inner sep=0.75pt]  [font=\scriptsize] [align=left] {$\displaystyle X( \sigma ^{b}$)};
\draw (56.96,154.15) node [anchor=north west][inner sep=0.75pt]  [font=\scriptsize] [align=left] {$\displaystyle 1$};
\draw (397.76,153.7) node [anchor=north west][inner sep=0.75pt]  [font=\scriptsize] [align=left] {$\displaystyle 1$};
\draw (224.07,154.6) node [anchor=north west][inner sep=0.75pt]  [font=\scriptsize] [align=left] {$\displaystyle 1$};
\draw (434.53,153.26) node [anchor=north west][inner sep=0.75pt]  [font=\scriptsize] [align=left] {$\displaystyle k+1$};
\draw (259.1,154.15) node [anchor=north west][inner sep=0.75pt]  [font=\scriptsize] [align=left] {$\displaystyle k+1$};
\draw (95.48,153.7) node [anchor=north west][inner sep=0.75pt]  [font=\scriptsize] [align=left] {$\displaystyle k+1$};

\end{tikzpicture}
            \vspace{-.5cm}
            \caption{Type A: Initial cut at $\Delta_{i'j'}$ followed by $\Delta_{ij}$.}
            \vspace{1cm}
            \label{fig: Type_A_Delta_ij2}
        \end{subfigure}
        \begin{subfigure}{\textwidth}
            \centering
            \tikzset{every picture/.style={line width=0.75pt}} 

\begin{tikzpicture}[x=0.75pt,y=0.75pt,yscale=-1,xscale=1]

\draw   (132.32,85.41) -- (120.29,124.71) -- (88.8,149) -- (49.87,149) -- (18.38,124.71) -- (6.35,85.41) -- (18.38,46.1) -- (49.87,21.81) -- (88.8,21.81) -- (120.29,46.1) -- cycle ;
\draw  [dash pattern={on 4.5pt off 4.5pt}]  (18.38,124.71) -- (49.87,21.81) ;
\draw  [dash pattern={on 4.5pt off 4.5pt}]  (88.8,21.81) -- (120.29,124.71) ;
\draw    (332.79,123.42) -- (320.45,84.89) ;
\draw    (320.45,84.89) -- (333.42,46.37) ;
\draw    (333.42,46.37) -- (362.84,23.11) ;
\draw    (362.84,23.11) -- (332.79,123.42) ;
\draw    (453.64,124.46) -- (466.62,85.93) ;
\draw    (466.62,85.93) -- (452.97,47.41) ;
\draw    (452.97,47.41) -- (422.01,24.15) ;
\draw    (422.01,24.15) -- (453.64,124.46) ;
\draw    (171.96,123.27) -- (159.62,84.75) ;
\draw    (159.93,84.75) -- (172.9,46.22) ;
\draw    (172.9,46.22) -- (202.33,22.96) ;
\draw    (202.33,22.96) -- (172.27,123.27) ;
\draw    (284.42,124.66) -- (295.66,85.79) ;
\draw    (295.66,85.79) -- (283.76,47.61) ;
\draw    (283.76,47.61) -- (252.64,24.35) ;
\draw  [dash pattern={on 4.5pt off 4.5pt}]  (252.64,24.35) -- (284.42,124.66) ;
\draw    (284.42,124.66) -- (253.27,148.96) ;
\draw    (253.27,148.61) -- (210.87,148.61) ;
\draw    (210.87,148.61) -- (180.5,124.66) ;
\draw    (180.5,125.01) -- (212.14,24.7) ;
\draw    (212.14,24.7) -- (252.64,24.35) ;
\draw    (412.52,25.19) -- (444.3,125.5) ;
\draw    (444.3,125.5) -- (413.15,149.8) ;
\draw    (413.15,149.45) -- (370.75,149.45) ;
\draw    (370.75,149.45) -- (340.38,125.5) ;
\draw    (340.38,125.85) -- (372.02,25.54) ;
\draw    (372.02,25.54) -- (412.52,25.19) ;
\draw    (162.41,41.39) .. controls (161.28,42.23) and (171.79,52.86) .. (179.1,59.37) ;
\draw [shift={(180.57,60.66)}, rotate = 220.82] [color={rgb, 255:red, 0; green, 0; blue, 0 }  ][line width=0.75]    (10.93,-3.29) .. controls (6.95,-1.4) and (3.31,-0.3) .. (0,0) .. controls (3.31,0.3) and (6.95,1.4) .. (10.93,3.29)   ;
\draw    (322.41,40.99) .. controls (321.28,41.83) and (331.79,52.46) .. (339.1,58.97) ;
\draw [shift={(340.57,60.26)}, rotate = 220.82] [color={rgb, 255:red, 0; green, 0; blue, 0 }  ][line width=0.75]    (10.93,-3.29) .. controls (6.95,-1.4) and (3.31,-0.3) .. (0,0) .. controls (3.31,0.3) and (6.95,1.4) .. (10.93,3.29)   ;
\draw    (460.69,40.54) .. controls (463.06,44.75) and (455.63,54.91) .. (447.6,61.69) ;
\draw [shift={(446.06,62.95)}, rotate = 321.91] [color={rgb, 255:red, 0; green, 0; blue, 0 }  ][line width=0.75]    (10.93,-3.29) .. controls (6.95,-1.4) and (3.31,-0.3) .. (0,0) .. controls (3.31,0.3) and (6.95,1.4) .. (10.93,3.29)   ;

\draw (414.31,11.34) node [anchor=north west][inner sep=0.75pt]  [font=\scriptsize] [align=left] {$\displaystyle i'$};
\draw (254.66,11.29) node [anchor=north west][inner sep=0.75pt]  [font=\scriptsize] [align=left] {$\displaystyle i'$};
\draw (90.62,8.7) node [anchor=north west][inner sep=0.75pt]  [font=\scriptsize] [align=left] {$\displaystyle i'$};
\draw (446.21,129.47) node [anchor=north west][inner sep=0.75pt]  [font=\scriptsize] [align=left] {$\displaystyle j'$};
\draw (286.33,128.63) node [anchor=north west][inner sep=0.75pt]  [font=\scriptsize] [align=left] {$\displaystyle j'$};
\draw (122.2,128.68) node [anchor=north west][inner sep=0.75pt]  [font=\scriptsize] [align=left] {$\displaystyle j'$};
\draw (328.79,128.77) node [anchor=north west][inner sep=0.75pt]  [font=\scriptsize] [align=left] {$\displaystyle i$};
\draw (169.53,127.38) node [anchor=north west][inner sep=0.75pt]  [font=\scriptsize] [align=left] {$\displaystyle i$};
\draw (6.56,127.92) node [anchor=north west][inner sep=0.75pt]  [font=\scriptsize] [align=left] {$\displaystyle i$};
\draw (365.95,11.56) node [anchor=north west][inner sep=0.75pt]  [font=\scriptsize] [align=left] {$\displaystyle j$};
\draw (205.51,10.17) node [anchor=north west][inner sep=0.75pt]  [font=\scriptsize] [align=left] {$\displaystyle j$};
\draw (42.14,8.02) node [anchor=north west][inner sep=0.75pt]  [font=\scriptsize] [align=left] {$\displaystyle j$};
\draw (364.82,151.18) node [anchor=north west][inner sep=0.75pt]  [font=\scriptsize] [align=left] {$\displaystyle 1$};
\draw (205.23,151.13) node [anchor=north west][inner sep=0.75pt]  [font=\scriptsize] [align=left] {$\displaystyle 1$};
\draw (44.63,151.67) node [anchor=north west][inner sep=0.75pt]  [font=\scriptsize] [align=left] {$\displaystyle 1$};
\draw (79.81,151.23) node [anchor=north west][inner sep=0.75pt]  [font=\scriptsize] [align=left] {$\displaystyle k+1$};
\draw (403.63,151.63) node [anchor=north west][inner sep=0.75pt]  [font=\scriptsize] [align=left] {$\displaystyle k+1$};
\draw (243.64,151.13) node [anchor=north west][inner sep=0.75pt]  [font=\scriptsize] [align=left] {$\displaystyle k+1$};
\draw (36.34,97.45) node [anchor=north west][inner sep=0.75pt]  [font=\scriptsize] [align=left] {$\displaystyle X\left( \sigma ^{a+b+c-2}\right)$};
\draw (145.25,25.61) node [anchor=north west][inner sep=0.75pt]  [font=\scriptsize] [align=left] {$\displaystyle X\left( \sigma ^{a}\right)$};
\draw (304.75,25.75) node [anchor=north west][inner sep=0.75pt]  [font=\scriptsize] [align=left] {$\displaystyle X\left( \sigma ^{a}\right)$};
\draw (207.22,97) node [anchor=north west][inner sep=0.75pt]  [font=\scriptsize] [align=left] {$\displaystyle X\left( \sigma ^{b+c-1}\right)$};
\draw (379.39,96.73) node [anchor=north west][inner sep=0.75pt]  [font=\scriptsize] [align=left] {$\displaystyle X( \sigma ^{b}$)};
\draw (448.17,23.67) node [anchor=north west][inner sep=0.75pt]  [font=\scriptsize] [align=left] {$\displaystyle X\left( \sigma ^{c}\right)$};

\end{tikzpicture}
            \vspace{-.5cm}
            \caption{Type B: Initial cut at $\Delta_{ij}$ followed by $\Delta_{i'j'}$.}
            \vspace{1cm}
            \label{fig: Type B Delta ij}
        \end{subfigure}
        \begin{subfigure}{\textwidth}
            \centering
            \tikzset{every picture/.style={line width=0.75pt}} 

\begin{tikzpicture}[x=0.75pt,y=0.75pt,yscale=-1,xscale=1]

\draw  [line width=0.75]  (128.14,76.41) -- (116.46,113.82) -- (85.87,136.94) -- (48.05,136.94) -- (17.46,113.82) -- (5.78,76.41) -- (17.46,38.99) -- (48.05,15.87) -- (85.87,15.87) -- (116.46,38.99) -- cycle ;
\draw [line width=0.75]  [dash pattern={on 4.5pt off 4.5pt}]  (17.46,113.82) -- (48.05,15.87) ;
\draw [line width=0.75]  [dash pattern={on 4.5pt off 4.5pt}]  (85.87,15.87) -- (116.46,113.82) ;
\draw [line width=0.75]    (346.44,112.19) -- (334.49,75.52) ;
\draw [line width=0.75]    (334.49,75.52) -- (347.06,38.84) ;
\draw [line width=0.75]    (347.06,38.84) -- (375.56,16.7) ;
\draw [line width=0.75]    (375.56,16.7) -- (346.44,112.19) ;
\draw [line width=0.75]    (463.52,113.18) -- (476.09,76.51) ;
\draw [line width=0.75]    (476.09,76.51) -- (462.87,39.83) ;
\draw [line width=0.75]    (462.87,39.83) -- (432.88,17.7) ;
\draw [line width=0.75]    (432.88,17.7) -- (463.52,113.18) ;
\draw [line width=0.75]    (278.77,111.79) -- (290.34,75.11) ;
\draw [line width=0.75]    (290.04,75.11) -- (277.88,38.44) ;
\draw [line width=0.75]    (277.88,38.44) -- (250.29,16.3) ;
\draw [line width=0.75]    (250.29,16.3) -- (278.47,111.79) ;
\draw [line width=0.75]    (173.33,113.11) -- (162.79,76.1) ;
\draw [line width=0.75]    (162.79,76.1) -- (173.95,39.76) ;
\draw [line width=0.75]    (173.95,39.76) -- (203.13,17.62) ;
\draw [line width=0.75]  [dash pattern={on 4.5pt off 4.5pt}]  (203.13,17.62) -- (173.33,113.11) ;
\draw [line width=0.75]    (173.33,113.11) -- (202.54,136.24) ;
\draw [line width=0.75]    (202.54,135.91) -- (242.29,135.91) ;
\draw [line width=0.75]    (242.29,135.91) -- (270.76,113.11) ;
\draw [line width=0.75]    (270.76,113.44) -- (241.1,17.95) ;
\draw [line width=0.75]    (241.1,17.95) -- (203.13,17.62) ;
\draw [line width=0.75]    (423.68,18.69) -- (454.47,114.18) ;
\draw [line width=0.75]    (454.47,114.18) -- (424.3,137.3) ;
\draw [line width=0.75]    (424.3,136.97) -- (383.22,136.97) ;
\draw [line width=0.75]    (383.22,136.97) -- (353.8,114.18) ;
\draw [line width=0.75]    (353.8,114.51) -- (384.45,19.02) ;
\draw [line width=0.75]    (384.45,19.02) -- (423.68,18.69) ;
\draw [line width=0.75]    (336.32,32.35) .. controls (335.22,33.14) and (345.4,43.27) .. (352.48,49.46) ;
\draw [shift={(353.91,50.69)}, rotate = 220.32] [color={rgb, 255:red, 0; green, 0; blue, 0 }  ][line width=0.75]    (10.93,-3.29) .. controls (6.95,-1.4) and (3.31,-0.3) .. (0,0) .. controls (3.31,0.3) and (6.95,1.4) .. (10.93,3.29)   ;
\draw [line width=0.75]    (473.89,33.41) .. controls (476.19,37.42) and (468.99,47.09) .. (461.21,53.55) ;
\draw [shift={(459.72,54.75)}, rotate = 322.39] [color={rgb, 255:red, 0; green, 0; blue, 0 }  ][line width=0.75]    (10.93,-3.29) .. controls (6.95,-1.4) and (3.31,-0.3) .. (0,0) .. controls (3.31,0.3) and (6.95,1.4) .. (10.93,3.29)   ;
\draw [line width=0.75]    (286.59,32.88) .. controls (288.89,36.89) and (281.67,46.56) .. (273.87,53.02) ;
\draw [shift={(272.38,54.21)}, rotate = 322.47] [color={rgb, 255:red, 0; green, 0; blue, 0 }  ][line width=0.75]    (10.93,-3.29) .. controls (6.95,-1.4) and (3.31,-0.3) .. (0,0) .. controls (3.31,0.3) and (6.95,1.4) .. (10.93,3.29)   ;

\draw (424.79,5.57) node [anchor=north west][inner sep=0.75pt]  [font=\scriptsize] [align=left] {$\displaystyle i'$};
\draw (242.67,5.15) node [anchor=north west][inner sep=0.75pt]  [font=\scriptsize] [align=left] {$\displaystyle i'$};
\draw (88.06,3.03) node [anchor=north west][inner sep=0.75pt]  [font=\scriptsize] [align=left] {$\displaystyle i'$};
\draw (456.15,117.69) node [anchor=north west][inner sep=0.75pt]  [font=\scriptsize] [align=left] {$\displaystyle j'$};
\draw (272.46,116.63) node [anchor=north west][inner sep=0.75pt]  [font=\scriptsize] [align=left] {$\displaystyle j'$};
\draw (118.16,117.34) node [anchor=north west][inner sep=0.75pt]  [font=\scriptsize] [align=left] {$\displaystyle j'$};
\draw (343.09,116.08) node [anchor=north west][inner sep=0.75pt]  [font=\scriptsize] [align=left] {$\displaystyle i$};
\draw (6.9,116.09) node [anchor=north west][inner sep=0.75pt]  [font=\scriptsize] [align=left] {$\displaystyle i$};
\draw (162.28,116.08) node [anchor=north west][inner sep=0.75pt]  [font=\scriptsize] [align=left] {$\displaystyle i$};
\draw (377.94,4.93) node [anchor=north west][inner sep=0.75pt]  [font=\scriptsize] [align=left] {$\displaystyle j$};
\draw (194.15,4.5) node [anchor=north west][inner sep=0.75pt]  [font=\scriptsize] [align=left] {$\displaystyle j$};
\draw (39.54,2.81) node [anchor=north west][inner sep=0.75pt]  [font=\scriptsize] [align=left] {$\displaystyle j$};
\draw (32.65,87.31) node [anchor=north west][inner sep=0.75pt]  [font=\scriptsize] [align=left] {$\displaystyle X\left( \sigma ^{a+b+c-2}\right)$};
\draw (378.41,138.69) node [anchor=north west][inner sep=0.75pt]  [font=\scriptsize] [align=left] {$\displaystyle 1$};
\draw (197.71,138.26) node [anchor=north west][inner sep=0.75pt]  [font=\scriptsize] [align=left] {$\displaystyle 1$};
\draw (42.73,138.71) node [anchor=north west][inner sep=0.75pt]  [font=\scriptsize] [align=left] {$\displaystyle 1$};
\draw (416.16,138.26) node [anchor=north west][inner sep=0.75pt]  [font=\scriptsize] [align=left] {$\displaystyle k+1$};
\draw (232.46,138.26) node [anchor=north west][inner sep=0.75pt]  [font=\scriptsize] [align=left] {$\displaystyle k+1$};
\draw (78.26,139.13) node [anchor=north west][inner sep=0.75pt]  [font=\scriptsize] [align=left] {$\displaystyle k+1$};
\draw (322.04,14.55) node [anchor=north west][inner sep=0.75pt]  [font=\scriptsize] [align=left] {$\displaystyle X\left( \sigma ^{a}\right)$};
\draw (390.06,89.56) node [anchor=north west][inner sep=0.75pt]  [font=\scriptsize] [align=left] {$\displaystyle X( \sigma ^{b}$)};
\draw (456.78,14.98) node [anchor=north west][inner sep=0.75pt]  [font=\scriptsize] [align=left] {$\displaystyle X\left( \sigma ^{c}\right)$};
\draw (271.94,14.98) node [anchor=north west][inner sep=0.75pt]  [font=\scriptsize] [align=left] {$\displaystyle X\left( \sigma ^{c}\right)$};
\draw (193.64,87.74) node [anchor=north west][inner sep=0.75pt]  [font=\scriptsize] [align=left] {$\displaystyle X\left( \sigma ^{a+b-1}\right)$};

\end{tikzpicture}
            \vspace{-.5cm}
            \caption{Type B: Initial cut at $\Delta_{i'j'}$ followed by $\Delta_{ij}$.}
            \vspace{1cm}
            \label{fig: Type_B_Delta_ij2}
        \end{subfigure}
        \caption{All possible variations of Type A and B cuts.}
        \label{Cuts}        
    \end{figure}

\begin{lemma}
\label{lem: def T}
For $A,A'\in\Pi_{2,c+1}^{\circ}$ we define the map $T_\lambda$ as 
        $$
            T_{\lambda}:A\rightarrow A',\ a_m\rightarrow a_m\lambda^{m-j}
        $$
Then $T_\lambda$ preserves   $\Pi_{2,n}^{\circ,1}$ and defines $\C^*$ actions on       $\Pi_{2,n}^{\circ}$ and $\Pi_{2,n}^{\circ,1}$.
\end{lemma}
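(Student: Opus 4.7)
The plan is to verify three things about $T_{\lambda}$: that it maps $\Pi_{2,n}^{\circ}$ to itself, that it restricts to a self-map of $\Pi_{2,n}^{\circ,1}$, and that $T_{1}=\Id$ together with $T_{\lambda}\circ T_{\mu}=T_{\lambda\mu}$, so that $\lambda\mapsto T_{\lambda}$ defines a $\C^{*}$ action. Since $T_{\lambda}$ is given explicitly by rescaling each column $a_{m}$ by a power of $\lambda$ depending only on the index $m$ (with the exponents chosen as in the rescaling that appears in Theorem \ref{thm: type A cut}, namely alternating in $m-j$ so that $e(m)+e(m+1)=0$), the key computation is to track how Pl\"ucker coordinates transform.

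First I would observe that by multilinearity of the determinant in the columns, for any $p<q$,
$$
\Delta_{p,q}(T_{\lambda}(A)) \;=\; \lambda^{e(p)+e(q)}\,\Delta_{p,q}(A),
$$
where $e(m)$ denotes the exponent appearing in the formula for $T_{\lambda}$ on the $m$-th column. With the alternating convention, $e(m)+e(m+1)=0$ for every $m$, so $\Delta_{m,m+1}(T_{\lambda}(A))=\Delta_{m,m+1}(A)$ for all $m$, and similarly the cyclic minor $\Delta_{n,1}$ is multiplied by a nonzero power of $\lambda$. This simultaneously gives (i) openness is preserved, since each cyclically consecutive minor is rescaled by a nonzero scalar and hence stays nonzero, and (ii) the normalization $\Delta_{m,m+1}=1$ is preserved, so $T_{\lambda}$ restricts to a map $\Pi_{2,n}^{\circ,1}\to \Pi_{2,n}^{\circ,1}$.

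Next, for the $\C^{*}$ action axioms: $T_{1}=\Id$ is immediate because $1^{e(m)}=1$. For composition, applying $T_{\lambda}$ after $T_{\mu}$ sends $a_{m}$ to $\lambda^{e(m)}\mu^{e(m)}a_{m}=(\lambda\mu)^{e(m)}a_{m}$, which is precisely $T_{\lambda\mu}(a_{m})$; hence $T_{\lambda}\circ T_{\mu}=T_{\lambda\mu}$. Regularity of the action in $\lambda\in\C^{*}$ is clear from the explicit polynomial-in-$\lambda^{\pm 1}$ formula, so we obtain an algebraic $\C^{*}$ action.

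The main obstacle is mostly bookkeeping: pinning down the exponent convention $e(m)$ so that it is consistent with the rescalings used inside the proof of Theorem \ref{thm: type A cut} (where factors of the form $\Delta_{ij}^{(-1)^{m-j+1}}$ appear), and checking the parity at the two boundary columns $m=1$ and $m=n$ so that the cyclic Pl\"ucker coordinate $\Delta_{1,n}$ still transforms by a nonzero scalar. Once that convention is fixed, everything reduces to the one-line multilinearity computation above.
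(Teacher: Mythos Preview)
The paper's own proof of this lemma is empty (the \texttt{proof} environment is left blank), so your proposal is strictly more complete than what appears in the paper. Your argument is correct in outline: the multilinearity computation $\Delta_{p,q}(T_{\lambda}A)=\lambda^{e(p)+e(q)}\Delta_{p,q}(A)$ together with $e(m)+e(m+1)=0$ gives preservation of $\Pi_{2,n}^{\circ,1}$, and the group-law check $T_{\lambda}\circ T_{\mu}=T_{\lambda\mu}$ is immediate.

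Two small points worth tightening. First, the lemma as printed has the exponent $\lambda^{m-j}$, which taken literally does \emph{not} preserve $\Pi_{2,n}^{\circ,1}$ (since then $e(m)+e(m+1)=2m-2j+1\neq 0$); you are right to read it as the alternating exponent $(-1)^{m-j}$ (or $(-1)^{m-j-1}$), which is exactly the rescaling that actually occurs in the paper. You should state this correction explicitly rather than leave it implicit. Second, the place where $T_{\Delta_{ij}}$ is defined and used is the Type~B theorem (Theorem~\ref{thm: type B cuts}), not the Type~A theorem you cite; the formula there is $v_m\mapsto v_m\,\Delta_{ij}^{(-1)^{m-j-1}}$, confirming your alternating convention.
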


\begin{proof}
\end{proof}

\begin{theorem}
\label{thm: type B cuts}
For Type B cuts we have a commutative diagram  
        $$     
\begin{tikzcd}
	{X(\sigma^a)\times X(\sigma^b)\times X(\sigma^c)} && {X(\sigma^a)\times X(\sigma^{b+c-1})} && {X(\sigma^{a+b+c-2})} \\
	\\
	{X(\sigma^a)\times X(\sigma^b)\times X(\sigma^c)} &&&& {X(\sigma^{a+b-1})\times X(\sigma^c)}
	\arrow["{\Id\times \Id\times T_{\Delta_{ij}}}"', from=1-1, to=3-1]
	\arrow["{\Id\times\Phi_{i'j'}}", from=1-1, to=1-3]
	\arrow["{\Phi_{ij}}", from=1-3, to=1-5]
	\arrow["{\Phi_{ij}\times \Id}", from=3-1, to=3-5]
	\arrow["{\Phi_{i'j'}}"', from=3-5, to=1-5]
\end{tikzcd}
        $$
        Here $T_{\Delta_{ij}}$ is defined as in Lemma \ref{lem: def T} with $\lambda=\Delta_{ij}$.
        Informally, we can say that the gluing $P$ from smaller polygons is associative only up to the additional transformation $T_{\Delta_{ij}}$.
\end{theorem}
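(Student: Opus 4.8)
The plan is to follow the strategy of Theorem~\ref{thm: type A cut}: using the identifications $X(\sigma^m)\simeq\Pi_{2,m+1}^{\circ,1}$ of Theorem~\ref{thm: iso}, describe every map occurring in the diagram on the level of $2\times(\cdot)$ matrices by recording which columns are inserted where and by which scalar each inserted column is rescaled, and then compare the two resulting formulas for the composite $X(\sigma^a)\times X(\sigma^b)\times X(\sigma^c)\to\Pi_{2,k+1}^{\circ,1}$. Concretely, represent $x\in X(\sigma^a),\ y\in X(\sigma^b),\ z\in X(\sigma^c)$ by matrices $A,B,C$ in standard form and set $\lambda:=\Delta_{1,a+1}(A)$; by the recipe of Theorem~\ref{thm: one cut} this is exactly the value of $\Delta_{ij}$ of the reassembled matrix $V$, so that $T_{\Delta_{ij}}$ in the diagram means $T_\lambda$ acting on the $\mathcal{P}_c$-factor.

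For the top composite $\Phi_{ij}\circ(\Id\times\Phi_{i'j'})$ I would first glue $B$ and $C$ along $D_{i'j'}$ by Theorem~\ref{thm: one cut}: this inserts the columns of $C$ into the $\mathcal{P}_c$-slots rescaled by the monomials in $\Delta_{i'j'}=\Delta_{1,c+1}(C)$ prescribed there, and leaves the $\mathcal{P}_b$-columns untouched. Gluing the result with $A$ along $D_{ij}$ then rescales every column of $\mathcal{P}_{bc}$ lying on the far side of $D_{ij}$ by a power of $\lambda$. Because the configuration is of Type~B (so $j\le i'<j'$), the \emph{entire} block $\mathcal{P}_c$ lies on that far side, and hence every $C$-column of the final matrix acquires an additional power of $\lambda$ on top of its $\Delta_{i'j'}$-rescaling, while the $A$- and $B$-columns come out exactly as in the one-cut formulas. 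For the bottom composite $\Phi_{i'j'}\circ(\Phi_{ij}\times\Id)\circ(\Id\times\Id\times T_\lambda)$ I would first twist $C$ to $T_\lambda C$, then glue $A$ and $B$ along $D_{ij}$ (this rescales only the $\mathcal{P}_b$-columns, by powers of $\lambda$, the $\mathcal{P}_c$-columns staying equal to those of $T_\lambda C$), and finally glue in $T_\lambda C$ along $D_{i'j'}$, which rescales only the interior $\mathcal{P}_c$-columns, by powers of the number $\Delta_{i'j'}$ read off in the intermediate matrix. Writing out both formulas, the $\mathcal{P}_a$- and $\mathcal{P}_b$-blocks agree verbatim, and on the $\mathcal{P}_c$-block the initial twist $T_\lambda$ together with the $\Delta_{i'j'}$-rescaling of the bottom composite reproduces precisely the product of a $\lambda$-power and a $\Delta_{i'j'}$-power that the second cut of the top composite produces on those columns; hence the diagram commutes.

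Along the way I would record from Lemma~\ref{lem: def T} the facts actually used: that $T_\lambda$ preserves the standard form and the subvariety $\Pi_{2,c+1}^{\circ,1}$ (the alternating exponents make each consecutive $2\times 2$ minor $\lambda$-invariant), keeps $\Delta_{1,c+1}$ nonzero, and satisfies $T_\lambda T_{\lambda'}=T_{\lambda\lambda'}$, so that $T_\lambda$ is invertible and the left vertical map of the diagram is a well-defined isomorphism.

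The main obstacle is the combinatorial bookkeeping of the scalars: in each composite the second cut acts on a matrix already rescaled by the first cut, and the rescaling exponents alternate in sign (the $(-1)^{m-j}$ and $(-1)^{m-j'}$ patterns from the Type~A proof), so one must verify that two a priori different products of powers of $\lambda^{\pm1}$ and $\Delta_{i'j'}^{\pm1}$ agree on every $\mathcal{P}_c$-column. Organizing the computation exactly as in the proof of Theorem~\ref{thm: type A cut}, in particular using the cancellation $(-1)^{m-j+1}+(-1)^{m-j}=0$, should make these equalities transparent. One must also dispose separately of the degenerate sub-case $i=1$ (Case~1 of Theorem~\ref{thm: one cut}), where Lemma~\ref{lem: matrix} is applied before Lemma~\ref{lem: rescale}; this contributes one extra overall row operation but does not change the structure of the comparison.
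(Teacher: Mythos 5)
Your proposal follows the same strategy as the paper: track the matrix rescalings dictated by Theorem~\ref{thm: one cut} through the two composites, observe that in the Type~B configuration the entire $\mathcal{P}_c$-block lies on the far side of $D_{ij}$ so that the final $D_{ij}$-gluing of the top composite hits all of its columns with alternating powers of $\lambda=\Delta_{ij}$, and let $T_\lambda$ absorb that factor on the bottom. Your supporting remarks (that $T_\lambda$ preserves $\Pi_{2,c+1}^{\circ,1}$ and defines a $\C^*$-action, and that the degenerate sub-case $i=1$ needs a separate word via Lemma~\ref{lem: matrix}) also coincide with the points the paper's proof must address.

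There is, however, a directional slip in your account of the one-cut gluing that would derail the bookkeeping if written out verbatim. In $\Phi_{i'j'}(B,C)$, Theorem~\ref{thm: one cut} rescales the columns of the \emph{outer} piece $B$ beyond vertex $j'$ by alternating powers of $\Delta_{1,c+1}(C)$, and inserts the \emph{inner} piece $C$ as-is (up to the $GL_2$ boundary match); you state the opposite, and repeat the same inversion in the final step of the bottom composite. Consequently the $\mathcal{P}_c$-columns should carry \emph{no} $\Delta_{i'j'}$-rescaling in either path: in the top composite they acquire only the alternating powers of $\lambda$ from the $D_{ij}$-gluing, and in the bottom composite they are exactly the columns of $T_\lambda C$ -- which is precisely why the correction by $T_\lambda$ is clean -- while the $\Delta_{i'j'}$-rescaling lands on the $\mathcal{P}_b$-columns beyond $j'$. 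Once this is reversed your comparison becomes the paper's, which realizes both composites as inverses of the two orders of cutting a fixed $V$: the matrices $V_1,V_6,V_4$ of part~(i) and $W_4,W_6,W_1$ of part~(ii) satisfy $V_1=W_4$, $V_6=W_6$, and $V_4=T_{\Delta_{ij}}^{-1}(W_1)$, so the two forward composites agree after pre-composing with $\Id\times\Id\times T_{\Delta_{ij}}$.
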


\begin{proof}
    Let $V\in\Pi_{2,k+1}^{\circ,1}$ by Theorem \ref{thm: iso}.

    For Type B cuts, choose some $i,j,i',j'$ such that $1\le i<j\le i'<j'\le k+1$. Similar to Theorem \ref{thm: type A cut}, we describe the inverse maps then produce the desired map. Here $a=j-i,\,b=k-j'+i'-j+i+2,\,c=j'-i'$. Define the matrix $V\in\Pi_{2,k+1}^{\circ,1}$ associated to $X(\sigma^k)$ as 
    $$
    V=\begin{pmatrix}
        v_1 &\dots &v_i &\dots &v_j &\dots &v_{i'} &\dots &v_{j'} &\dots &v_{k+1}
    \end{pmatrix}$$
    Similar to Theorem \ref{thm: type A cut} we will be dealing with minors in several different matrices and will include the matrices in the notations.

    (i) We first consider the case where we cut along $\Delta_{ij}(V)$ then $\Delta_{i'j'}(V)$, see Figure \ref{fig: Type B Delta ij}, given by the map 
    $$
    X(\sigma^{a+b+c-2})\xrightarrow{\Phi_{ij}^{-1}} X(\sigma^{a})\times X(\sigma^{b+c-1})\xrightarrow{\Id\times\Phi_{i'j'}^{-1}}X(\sigma^a)\times X(\sigma^b)\times X(\sigma^c)
    $$
    Performing the initial cut $\Delta_{ij}(V)$, given by $\Phi_{ij}^{-1}:X(\sigma^{a+b+c-2})\rightarrow X(\sigma^a)\times X(\sigma^{b+c-1})$, decomposes $V$ into the two matrices 
    $$
    V_1=\begin{pmatrix}
        v_i &\dots &v_j
    \end{pmatrix}\in\Mat(2,a+1)
    $$
    $$
    V_2=\begin{pmatrix}
    v_1 &\dots &v_i &v_j &\dots &v_{i'} &\dots &v_{j'} &\dots &v_{k+1}
    \end{pmatrix}\in\Mat(2,b+c)
    $$
    By the same argument as in Theorem \ref{thm: one cut}, $V_1\in\Pi_{2,a+1}^{\circ,1}\simeq X(\sigma^a)$ whereas $V_2\in\Pi_{2,b+c}^{\circ}$ and requires rescaling by $\Delta_{ij}(V)$ for the vectors $v_m$ for $m\ge j$, resulting in the matrix 
    $$V_3=\begin{pmatrix}
        v_1 &\dots &v_i &v'_j &\dots &v'_{i'} &\dots &v'_{j'} &\dots &v'_{k+1}
    \end{pmatrix}
    $$
    where 
    \begin{equation}
    \label{eq: v' larger than j type B}
        v'_m=v_m\Delta_{ij}(V)^{(-1)^{m-j+1}}
    \end{equation}
    Note that $\Delta_{i'j'}(V_3)$ experiences a rescaling by factor of $\Delta_{ij}(V)$, given that $v'_{i'}=V_{i'}\Delta_{ij}^{(-1)^{i'-j+1}}$ and $v'_{j'}=v_{j'}\Delta_{ij}^{(-1)^{j'-j+1}}$, the rescaled determinant is given by 
    \begin{align}
    \label{eq: rescale delta Type B}
        \Delta_{i'j'}(V_3)&=\Delta_{i'j'}(V)\Delta_{ij}(V)^{(-1)^{i'-j+1}}\Delta_{ij}(V)^{(-1)^{j'-j+1}}\notag\\
        &=\Delta_{i'j'}(V)\Delta_{ij}(v)^{(-1)^{i'-j+1}+(-1)^{j'-j+1}}
    \end{align}
    
    This completes the construction of the map $\Phi^{-1}_{ij}:X(\sigma_{a+b+c-2})\rightarrow X(\sigma^a)\times X(\sigma^{b+c-1})$. Applying the second cut $\Delta_{i'j'}(V)$ to the matrix $V_3$ produces the two matrices
    $$
    V_4=\begin{pmatrix}
        v'_{i'} &\dots &v'_{j'}
    \end{pmatrix}\in\Mat(2,c+1)
    $$
    $$
    V_5=\begin{pmatrix}
        v_1 &\dots &v_i &v'_j &\dots &v'_{i'} &v'_{j'} &\dots &v'_{k+1}
    \end{pmatrix}\in\Mat(2,b+1)
    $$
    Here, $V_4\in\Pi_{2,c+1}^{\circ,1}\simeq X(\sigma^c)$. Since $V_5\in\Pi_{2,b+1}^{\circ}$ we applying a rescaling of the vectors $v'_m$ for $m\ge j'$ into the matrix 
    $$
    V_6=\begin{pmatrix}
        v_1 &\dots &v_i &v'_j &\dots &v'_{i'} &v''_{j'} &\dots &v''_{k+1}
    \end{pmatrix}$$
    where 
    \begin{equation}
        \label{eq:v'' m larger than j' Type B}
        v''_m=v'_m\Delta_{i'j'}(V_3)^{(-1)^{m-j'+1}}
    \end{equation}
    Using \eqref{eq: rescale delta Type B} we find that 
    \begin{align*}
        \Delta_{i'j'}(V_3)^{(-1)^{m-j'+1}}&= (\Delta_{i'j'}(V)\Delta_{ij}(V)^{(-1)^{i'-j+1}+(-1)^{j'-j+1}})^{(-1)^{m-j'+1}}\\
        &=\Delta_{i'j'}(V)^{(-1)^{m-j'+1}}\Delta_{ij}(V)^{(-1)^{i'-j+1}(-1)^{m-j'+1}+(-1)^{j'-j+1}(-1)^{m-j'+1}}
    \end{align*}
    and $(-1)^{i'-j+1}(-1)^{m-j'+1}+(-1)^{j'-j+1}(-1)^{m-j'+1}=(-1)^{m-j'+i'-j}+(-1)^{m-j}$. Therefore         
    \begin{align}
    \label{eq: v'' m larger j' Type B}
            v''_m&=v_m\Delta_{ij}(V)^{(-1)^{m-j+1}}\Delta_{i'j'}(V)^{(-1)^{m-j'+1}}\Delta_{ij}(V)^{(-1)^{m-j'+i'+j}}\Delta_{ij}^{(-1)^{m-j}}\notag\\
            &=v_m\Delta_{i'j'}(V)^{(-1)^{m-j'+1}}\Delta_{ij}(V)^{(-1)^{m-j'+i'-j}}
    \end{align}

    Now, $V_6\in\Pi_{2,b+1}^{\circ,1}\simeq X(\sigma^b)$. This concludes the construction of the inverse map, now we proceed to the construction of the desired map 
    $$
    X(\sigma^a)\times X(\sigma^b)\times X(\sigma^c)\xrightarrow{\Id\times\Phi_{i'j'}} X(\sigma^a)\times X(\sigma^{b+c-1})\xrightarrow{\Phi_{ij}}X(\sigma^{a+b+c-2})$$

    Given $V_1\in\Pi_{2,a+1}^{\circ,1},\, V_6\in\Pi_{2,b+1}^{\circ,1},\, V_4\in\Pi_{2,c+1}^{\circ,1}$ we reconstruct the matrix $V$. First, we determine that $\Delta_{i'j'}(V)=\Delta_{1,c+1}(V_4)\ne0$.The matrix $V_5$ is found by multiplication of $\Delta_{i'j'}(V)^{\pm1}$ to the vectors $v_l$ for $l\ge i+i'-j+2$ in matrix $V_6$. We then reconstruct $V_3\in\Pi^{\circ,1}_{2,b+c}\simeq X(\sigma^{b+c-1})$ by inserting the matrix $V_4$ into the appropriate position in the matrix $V_5$. This completes the map $\Id\times\Phi_{i'j'}:X(\sigma^a)\times X(\sigma^b)\times X(\sigma^c)\rightarrow X(\sigma^a)\times X(\sigma^{b+c-1})$.
    Now we continue our construction of the matrix $V$ by reading off $\Delta_{ij}(V)=\Delta_{1,a+1}(V_1)$ which is nonzero by assumption. We rescale the vectors $v_l$ for $l\ge i+1$ in the matrix $V_3$ by multiplication of $\Delta_{ij}(V)^{\pm1}$ which is invertible, to obtain the matrix $V_2$. Finally, we insert the matrix $V_1$ into $V_2$ to obtain $V$. Therefore, giving us the desired map above.

    (ii) Now, we consider the case were we first cut along $\Delta_{i'j'}(V)$ followed by the cut $\Delta_{ij}(V)$ and subsequently, a rescaling of $X(\sigma^c)$ by the torus action $T_{\Delta_{ij}}$, illustrated in Figure \ref{fig: Type_B_Delta_ij2}, given by 
    $$
    X(\sigma^{a+b+c-2})\xrightarrow{\Phi_{i'j'}^{-1}}X(\sigma^{a+b-1})\times X(\sigma^c)\xrightarrow{\Phi_{ij}^{-1}\times\Id}X(\sigma^a)\times X(\sigma^b)\times X(\sigma^c)\xrightarrow{\Id\times\Id\times T_{\Delta_{ij}}} X(\sigma^a)\times X(\sigma^b)\times X(\sigma^c)
    $$
    We perform the initial cut $\Delta_{i'j'}(V)$ to $V$ resulting in the matrices 
    $$
    W_1=\begin{pmatrix}
        v_{i'} &\dots &v_{j'}
    \end{pmatrix}\in\Mat(2,c+1)
    $$
    $$
    W_2=\begin{pmatrix}
        v_1 &\dots &v_i &\dots &v_j &\dots &v_{i'} &v_{j'} &\dots &v_{k+1}
    \end{pmatrix}\in\Mat(2,a+b)
    $$
    By the same argument in Theorem \ref{thm: one cut}, $V_1\in\Pi_{2,c+1}\simeq X(\sigma^c)$, whereas the matrix $V_2\in\Pi_{2,a+b}^{\circ}$ requires as rescaling of the vectors $v_m$ for $m\ge j'$ to obtain the matrix 
    $$
    W_3=\begin{pmatrix}
        v_1 &\dots &v_i &\dots &v_j &\dots &v_{i'} &\widetilde{v}_{j'} &\dots &\widetilde{v}_{k+1}
    \end{pmatrix}
    $$
    given by 
    \begin{equation}
        \label{eq: v' m larger j' pre torus Type B}
        \widetilde{v}_m=v_m\Delta_{i'j'}^{(-1)^{m-j'+1}}
    \end{equation}
    Now, $W_3\in\Pi_{2,a+b}^{\circ,1}\simeq X(\sigma^{a+b-1})$, completing the first map.

    We now perform the second cut $\Delta_{ij}(V)=\Delta_{ij}(W_2)$ by decomposing the matrix $W_3$ into the matrices
    $$
    W_4=\begin{pmatrix}
        v_i &\dots & v_j
    \end{pmatrix}\in\Mat(2,a+1)
    $$
    $$
    W_5=\begin{pmatrix}
        v_1 &\dots &v_i &v_j &\dots &v_{i'} &\widetilde{v}_{j'} &\dots &\widetilde{v}_{k+1}
    \end{pmatrix}\in\Mat(2,b+1)$$
    Given that $W_4\in\Pi_{2,a+1}^{\circ,1}\simeq X(\sigma^a)$ and $W_5\in\Pi_{2,b+1}^\circ$, the matrix vectors $v_m$ for $m\ge j$ in $W_5$ are rescaled into the matrix
    $$
    W_6=\begin{pmatrix}
        v_1 &\dots &v_i &v'_j &\dots &v'_{i'} &\widetilde{v}'_{j'} &\dots &\widetilde{v}'_{k+1}
    \end{pmatrix}$$
    where for $j\le m\le i'$
    \begin{equation}
    \label{eq: v'' btwn j and j' Type B}
        v'_m=v_m\Delta_{ij}^{(-1)^{m-j+1}}
    \end{equation}
    and for $m\ge j'$
    \begin{align}
        \label{v'' m larger j' Type B}
        \widetilde{v}'_m&=\widetilde{v}_m\Delta_{ij}(V)^{(-1)^{m-j'+i'-j}}\notag\\
        &=v_m\Delta_{i'j'}(V)^{(-1)^{m-j'+1}}\Delta_{ij}(V)^{(-1)^{m-j'+i'-j}}
    \end{align}
    Now, $W_6\in\Pi_{2,b+1}^{\circ,1}\simeq X(\sigma^b)$.  Note that for the vectors $v'_m$ for $j\le m\le i'$ \eqref{eq: v' larger than j type B} agrees with \eqref{eq: v'' btwn j and j' Type B}, for $j'\le m$ \eqref{eq: v'' m larger j' Type B} agrees with \eqref{v'' m larger j' Type B}. However, the vectors $v_m$ found in $W_1$ for $i'<m<j'$ do not agree with \eqref{eq: v' larger than j type B} and differ by a factor of $\Delta_{ij}(V)^{(-1)^{m-j+1}}$. Since $\Delta_{ij}\neq0$, we can then apply a torus action to the matrix $W_1\in\Pi_{2,c+1}^{\circ,1}\simeq X(\sigma^c)$ using Lemma \ref{lem: rescale}. Let $W_1,W_7\in\Pi_{2,c+1}^{\circ,1}$, define the torus action by the map 
    \begin{align*}
        T_{\Delta_{ij}}^{-1}:W_1&\longrightarrow W_7\\
        v_m&\longmapsto v_m\Delta_{ij}^{(-1)^{m-j-1}}
    \end{align*}
    Thus concluding the construction of the inverse maps.

    Now, we construct the suitable map to establish associativity up to an additional transformation $T_{\Delta_{ij}}$, given by 
    $$
    X(\sigma^a)\times X(\sigma^b)\times X(\sigma^c)\xrightarrow{\Id\times\Id\times T_{\Delta_{ij}}} X(\sigma^a)\times X(\sigma^b)\times X(\sigma^c)\xrightarrow{\Phi_{ij}\times\Id} X(\sigma^{a+b-1})\times X(\sigma^c)\xrightarrow{\Phi_{i'j'}}X(\sigma^{a+b+c-2})
    $$
    We reconstruct the matrix $V$ using $W_4\in\Pi_{2,a+1}^{\circ,1},\, W_6\in\Pi_{2,b+1}^{\circ,1},\,W_7\in\Pi_{2,c+1}^{\circ,1}$. First, we read off $\Delta_{ij}(V)=\Delta_{1,a+1}(W_4)\neq 0$ by assumption. We apply the toric action $T_{\Delta_{ij}}(W_7)=W_1\in\Pi_{2,c+1}^{\circ,1}$. Now, we rescale the matrix $W_6$ by multiplication of $\Delta_{ij}^{\pm1}$ to the vectors $v_l$ for $l\ge j$, producing the matrix $W_5$. We then reinsert the matrix $W_4$ into $W_5$ at the appropriate location, arriving at the matrix $W_3\in\Pi_{2,a+b}^{\circ,1}\simeq X(\sigma^{a+b-1})$. We then read of $\Delta_{i'j'}(V)=\Delta_{1,c+1}(W_1)\ne0$ and multiply $W_3$ by a factor of $\Delta_{i'j'}(V)^{\pm1}$ for vectors $v_l$ for $l\ge j'$ to produce $W_2$. We then reinsert the matrix $W_1$ into $W_2$ arriving at the desired matrix $V$. Thereby completing the construction of the desired map. 
\end{proof}

\subsection{Cuts, forms and cohomology}

Now we can study the effect of the cuts on the forms $\alpha$ and $\omega$. More precisely, we use the map
$\Phi_{ij}:X(\sigma^{j-i})\times X(\sigma^{k-j+i+1})\longrightarrow X(\sigma^{k})$
to compute the pullbacks $\Phi_{ij}^*\alpha$ and $\Phi_{ij}^{*}\omega$. The forms $\alpha$ and $\omega$ are equivalent under cluster mutation by \cite{LS}; hence, we choose an arbitrary cluster chart, see Figure \ref{fig: 2-form cuts}, and determine the how the forms interact with cuts. 

We will denote the forms from $X(\sigma^{j-i})$ by $\alpha_1$ and $\omega_1$, and the forms from $X(\sigma^{k-j+i+1})$ by $\alpha_2$ and $\omega_2$. As an abuse of notation we use the labeling from the larger positroid $\Pi_{2,a+b-1}^{o,1}$ identified with $X(\sigma^{a+b-1})$. Technically, under the isomorphism $\Delta_{1,j-i+1}=\Phi_{ij}^*(\Delta_{ij})$, therefore, $\alpha_1=\Phi_{ij}^*(d\log\Delta_{ij})$, similarly, $\alpha_2=\Phi_{ij}^*(d\log\Delta_{ij}^{(-1)^{k-j+1}}\Delta_{1,k+1})$ with similar considerations made to $\omega_1$ and $\omega_2$.

\begin{figure}        \include{Figures/2_form_cut}
        \caption{Triangulation of $(k+1)$-gon corresponding to the braid variety $X(\sigma^k)$ with its associated quiver. A cut $\Delta_{ij}$ is depicted between vertices $i$ and $j$. The cluster variables from the particular triangulation are the written in black and the rescaling factor of the cluster variables from the cut $\Delta_{ij}$ are written in red.}
        \label{fig: 2-form cuts}
\end{figure}

\begin{lemma}
\label{lem: alpha form}
We have 
$$
\Phi_{ij}^*\alpha=\alpha_2+(-1)^{k-j} \alpha_1.
$$
\end{lemma}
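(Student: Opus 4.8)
The plan is to reduce the statement to a logarithmic-derivative identity among Pl\"ucker coordinates. Recall that $\alpha=\frac{dw}{w}$ with $w=\Delta_{1,k+1}$, and that under the isomorphisms $X(\sigma^k)\simeq\Pi_{2,k+1}^{\circ,1}$, $X(\sigma^{a})\simeq\Pi_{2,a+1}^{\circ,1}$, $X(\sigma^{b})\simeq\Pi_{2,b+1}^{\circ,1}$ of Lemma~\ref{thm: iso} (here $a=j-i$, $b=k-j+i+1$) the forms $\alpha_1$ and $\alpha_2$ are the logarithmic derivatives of the frozen Pl\"ucker coordinates $\Delta_{1,a+1}$ and $\Delta_{1,b+1}$ of the two factors. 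Since $\Phi_{ij}$ is an isomorphism of $X(\sigma^a)\times X(\sigma^b)$ onto the open set $\{\Delta_{ij}\neq0\}\subset X(\sigma^k)$ (Theorem~\ref{thm: one cut}), it is enough to compute the single pulled-back function $\Phi_{ij}^*\Delta_{1,k+1}$ on the product and recognize it as a Laurent monomial in $\Delta_{1,a+1}$ (pulled back from the first factor) and $\Delta_{1,b+1}$ (pulled back from the second factor); then $\Phi_{ij}^*\alpha=\frac{d(\Phi_{ij}^*\Delta_{1,k+1})}{\Phi_{ij}^*\Delta_{1,k+1}}$ is read off by the rules $\frac{d(fg)}{fg}=\frac{df}{f}+\frac{dg}{g}$ and $\frac{d(f^n)}{f^n}=n\frac{df}{f}$.

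The key step is extracting this monomial from the explicit construction of $\Phi_{ij}$ in Theorem~\ref{thm: one cut}. Write a point of $X(\sigma^a)\times X(\sigma^b)$ as a pair $(V_1,V_2')$ with $V_1\in\Pi_{2,a+1}^{\circ,1}$, $V_2'\in\Pi_{2,b+1}^{\circ,1}$. On the first factor the frozen coordinate is $\Delta_{1,a+1}(V_1)=\Delta_{ij}(V)$, so $\alpha_1$ pulls back to $\frac{d\Delta_{ij}}{\Delta_{ij}}$ on the image. On the second factor, $V_2'$ is obtained from the submatrix $V_2=(v_1,\dots,v_i,v_j,\dots,v_{k+1})$ by the cascading column rescaling $v'_m=v_m\,\Delta_{ij}^{(-1)^{m-j+1}}$ for $m\ge j$ (and, when $i=1$, the row normalization of Lemma~\ref{lem: matrix}, which rescales all $2\times2$ minors of the outer piece uniformly). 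Inverting this to reconstruct $V$ shows in particular that the terminal vertex $v_{k+1}$ and its normalized counterpart $v'_{k+1}$ differ by the power $\Delta_{ij}^{(-1)^{k-j}}$; since row operations do not alter $2\times 2$ minors once the normalization is fixed, one gets
\[
\Phi_{ij}^*\Delta_{1,k+1}=\bigl(\Delta_{1,a+1}\bigr)^{(-1)^{k-j}}\cdot\Delta_{1,b+1}
\]
as functions on $X(\sigma^a)\times X(\sigma^b)$. Taking logarithmic derivatives yields $\Phi_{ij}^*\alpha=\alpha_2+(-1)^{k-j}\alpha_1$.

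The main obstacle is entirely the sign bookkeeping and the case split $i\ge 2$ versus $i=1$: one must check that the extra uniform rescaling of the outer piece in the case $i=1$ (Lemma~\ref{lem: matrix}) contributes trivially to the exponent of $\Delta_{ij}$, so that the same monomial identity and hence the same formula holds in both cases, and one must track the parity exponent $(-1)^{m-j+1}$ through the inversion carefully to pin down the sign $(-1)^{k-j}$. Everything else is routine: the form $\alpha$ is regular on all of $X(\sigma^k)$ and $\Phi_{ij}$ is an honest isomorphism onto its open image, so no density argument is needed, and the identity is independent of the cluster chart by \cite{LS}. (An alternative, more computational route uses the explicit expression $\alpha=\frac{1}{\Delta_{1,k+1}}\sum_{i=2}^{k}\Delta_{1,i}\Delta_{i,k+1}\,dz_i$ from the corollary to Lemma~\ref{lem: delta derivative} together with a formula for $\Phi_{ij}$ in the $z$-coordinates, but the monomial/logarithmic-derivative argument is cleaner and sets up the companion computation of $\Phi_{ij}^*\omega$.)
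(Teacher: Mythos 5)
You take a genuinely different and more self-contained route than the paper: rather than asserting the identification of $\alpha_2$ with a $d\log$ of a monomial in $\Delta_{ij}$ and $\Delta_{1,k+1}$ by citing \cite{LS}, you derive the Laurent monomial $\Phi_{ij}^*\Delta_{1,k+1}$ directly from the explicit rescaling in the construction of $\Phi_{ij}$ in Theorem \ref{thm: one cut}. That is precisely the content the paper's citation is silently packaging, it matches the style of Lemma \ref{lem: omega eqn}, and it cleanly sets up the companion computation of $\Phi_{ij}^*\omega$.

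However, there is a sign slip in your last step that you should chase down, because it points to a genuine discrepancy with the stated formula. You correctly read off $v'_{k+1}=\Delta_{ij}^{(-1)^{(k+1)-j+1}}v_{k+1}=\Delta_{ij}^{(-1)^{k-j}}v_{k+1}$, so the frozen Pl\"ucker of $V'_2$ is $\Delta_{1,b+1}(V'_2)=\det(v_1,v'_{k+1})=\Delta_{ij}^{(-1)^{k-j}}\Delta_{1,k+1}$. But to pull $\Delta_{1,k+1}$ back to the product you must \emph{solve} this relation for $\Delta_{1,k+1}$, giving $\Phi_{ij}^*\Delta_{1,k+1}=\Delta_{1,b+1}\cdot\Delta_{1,a+1}^{-(-1)^{k-j}}$, not $\Delta_{1,b+1}\cdot\Delta_{1,a+1}^{(-1)^{k-j}}$ as you wrote. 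Taking $d\log$ then yields $\Phi_{ij}^*\alpha=\alpha_2+(-1)^{k-j+1}\alpha_1$, which differs from the stated $\alpha_2+(-1)^{k-j}\alpha_1$ in the sign of $\alpha_1$. You can verify this on the example $k=4$, $i=2$, $j=4$: the rescaled frozen is $w_2=\Delta_{24}\Delta_{15}$, so $\alpha=d\log\Delta_{15}=\alpha_2-\alpha_1$, whereas the statement would give $\alpha_2+\alpha_1$. Note that Lemma \ref{lem: omega eqn} and Figure \ref{fig: 2-form cuts} take $\alpha_2=d\log\bigl(\Delta_{1,k+1}\Delta_{ij}^{(-1)^{k-j+2}}\bigr)$, with exponent $(-1)^{k-j+2}=(-1)^{k-j}$, which is exactly what your (correctly signed) derivation and Theorem \ref{thm: one cut} produce; the exponent $(-1)^{k-j+1}$ asserted in the proof of Lemma \ref{lem: alpha form} is the outlier. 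Finally, your claim that the $i=1$ normalization ``contributes trivially'' needs scrutiny: if one literally follows the paper's Case 1 (Lemma \ref{lem: matrix} followed by Lemma \ref{lem: rescale}), the exponent picks up a parity-dependent extra factor that does not match Case 2; the uniform monomial formula is recovered if one instead uses the Case 2 alternating column rescaling $v'_m=\Delta_{ij}^{(-1)^{m-j+1}}v_m$, which also lands in the standard form when $i=1$.
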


\begin{proof}
Recall that $\alpha=d\log(\Delta_{1,k+1})$. By \cite{LS} let $\alpha_1=d\log(\Delta_{ij})$ be the 1-form associated to $X(\sigma^{j-i})$ and $\alpha_2=d\log(\Delta_{ij}^{(-1)^{k-j+1}}w)=d\log(\Delta_{ij}^{(-1)^{k-j+1}}\Delta_{1,k+1})$ be the 1-form associated to $X(\sigma^{k-j+i+1})$. Given these conditions we find that 
\begin{align*}
    \alpha_2+(-1)^{k-j}\alpha_1&=d\log(\Delta_{ij}^{(-1)^{k-j+1}}\Delta_{1,k+1})+(-1)^{k-j}d\log(\Delta_{ij})\\
    &=d\log(\Delta_{ij}^{(-1)^{k-j+1}})+d\log(\Delta_{1,k+1})+(-1)^{k-j}d\log(\Delta_{ij})\\
    &=(-1)^{k-j+1}d\log(\Delta_{ij})+d\log(\Delta_{1,k+1})+(-1)^{k-j}d\log(\Delta_{ij})\\
    &=d\log(\Delta_{1,k+1})=\alpha
\end{align*}
\end{proof}

\begin{lemma}
\label{lem: omega eqn}
We have 
$$
\Phi_{ij}^*\omega=\omega_1+\omega_2+(-1)^{k-j}\alpha_1\wedge \alpha_2.
$$
\end{lemma}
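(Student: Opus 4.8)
The plan is to compute all three forms in one common cluster chart and reduce the identity to a finite cancellation in an $A$-type quiver. I would work in the chart of $X(\sigma^{k})$ attached to the triangulation of the $(k+1)$-gon $\mathcal{P}$ drawn in Figure~\ref{fig: 2-form cuts}: the fan triangulation from the vertex $1$, refined inside the sub-polygon $\mathcal{P}_1(i,j)$ by the fan from the vertex $i$. This triangulation restricts to the fan triangulation of $\mathcal{P}_1(i,j)$ (from $i$) and to that of $\mathcal{P}_2(i,j)$ (from $1$), and the cut diagonal $\Delta_{ij}$ lies in exactly two of its triangles, $(i,j{-}1,j)$ and $(1,i,j)$. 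Writing $\omega$ as the sum \eqref{eq: def Omega} over the arrows of the resulting $A_{k-1}$ quiver, that quiver splits into the $A_{a-1}$ quiver of $X(\sigma^a)$ on the variables $\Delta_{i,i+2},\dots,\Delta_{i,j-1},\Delta_{ij}$ (with $\Delta_{ij}$ its frozen vertex), the $A_{b-1}$ quiver of $X(\sigma^b)$ on $\Delta_{1,3},\dots,\Delta_{1,i},\Delta_{1,j},\dots,\Delta_{1,k}$ and $w=\Delta_{1,k+1}$, plus exactly the two ``bridge'' arrows $\Delta_{1,i}\to\Delta_{ij}\to\Delta_{1,j}$ coming from the triangle $(1,i,j)$. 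Hence
\[
\omega=\omega^{[1]}+\omega^{[2]}+\left(\frac{d\Delta_{1,i}}{\Delta_{1,i}}\wedge\frac{d\Delta_{ij}}{\Delta_{ij}}+\frac{d\Delta_{ij}}{\Delta_{ij}}\wedge\frac{d\Delta_{1,j}}{\Delta_{1,j}}\right),
\]
where $\omega^{[1]}$ and $\omega^{[2]}$ are the two quiver halves.

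Next I would record $\Phi_{ij}$ on these coordinates, reading the recipe off the construction in Theorem~\ref{thm: one cut}. Since the block $V_1=(v_i,\dots,v_j)$ is inserted verbatim, $\Phi_{ij}^{*}$ identifies the functions $\Delta_{i,m}$ and $\Delta_{ij}$ with the cluster variables of $X(\sigma^a)$, the last being its frozen variable; hence $\alpha_1=\Phi_{ij}^{*}\frac{d\Delta_{ij}}{\Delta_{ij}}$ and $\Phi_{ij}^{*}\omega^{[1]}=\omega_1$, with no rescaling correction. On the $\mathcal{P}_2$-side the rescaling $v'_m=\Delta_{ij}^{(-1)^{m-j+1}}v_m$ ($m\ge j$) of Theorem~\ref{thm: one cut} gives, writing $x_m$ for the corresponding cluster variable of $X(\sigma^b)$,
\[
\Phi_{ij}^{*}\frac{d\Delta_{1,m}}{\Delta_{1,m}}=
\begin{cases}
\dfrac{dx_m}{x_m}, & m\le i,\\[8pt]
\dfrac{dx_m}{x_m}+c_m\,\alpha_1, & m\ge j,
\end{cases}
\]
with $c_m\in\{\pm1\}$ alternating in $m$ and normalized by $c_{k+1}=(-1)^{k-j}$ so that the instance $m=k+1$ recovers Lemma~\ref{lem: alpha form}, $\Phi_{ij}^{*}\alpha=\alpha_2+(-1)^{k-j}\alpha_1$, where $\alpha_2=d\log x_{k+1}=d\log w^{(2)}$. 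Substituting into $\omega^{[2]}=\sum_{p\to q}\frac{d\Delta_{1,p}}{\Delta_{1,p}}\wedge\frac{d\Delta_{1,q}}{\Delta_{1,q}}$ and using $\alpha_1\wedge\alpha_1=0$ gives $\Phi_{ij}^{*}\omega^{[2]}=\omega_2+T\wedge\alpha_1$, where $T=\sum_{p\to q}\big(c_q\tfrac{dx_p}{x_p}-c_p\tfrac{dx_q}{x_q}\big)$ (with $c_m=0$ for $m\le i$) is a $1$-form supported on the chain $x_3,\dots,x_i,x_j,\dots,x_k,w$.

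It remains to do the cancellation. Because the $c_m$ alternate along the tail $x_j,\dots,x_k,w$ of the chain and vanish on the head $x_3,\dots,x_i$, the sum $T$ telescopes: all interior contributions cancel in pairs and only terms attached to the vertices $x_i$, $x_j$ and $w$ survive. Adding the pulled-back bridge term, which simplifies to $\big(\tfrac{dx_i}{x_i}-\tfrac{dx_j}{x_j}\big)\wedge\alpha_1$, the $\tfrac{dx_i}{x_i}$- and $\tfrac{dx_j}{x_j}$-parts cancel and what is left is a sign times $\tfrac{dw}{w}\wedge\alpha_1=\alpha_2\wedge\alpha_1$; determining this sign by the same parity count as in Lemma~\ref{lem: alpha form} identifies it as $(-1)^{k-j}\alpha_1\wedge\alpha_2$. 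Assembling, $\Phi_{ij}^{*}\omega=\Phi_{ij}^{*}\omega^{[1]}+\Phi_{ij}^{*}\omega^{[2]}+\Phi_{ij}^{*}(\text{bridge})=\omega_1+\omega_2+(-1)^{k-j}\alpha_1\wedge\alpha_2$. I expect this last step to be the main obstacle: one must carry the alternating powers $\Delta_{ij}^{(-1)^{m-j+1}}$ through the $A$-quiver, verify the telescoping, and pin down the overall sign carefully; and one must treat the degenerate cuts separately ($i=1$, where $\Delta_{1,i}$ is absent and $\Delta_{ij}$ is itself a fan diagonal, or $j=k+1$, where the tail is empty), which appear as limiting cases of the same computation.
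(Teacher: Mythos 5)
Your proposal follows essentially the same route as the paper's own proof: both work in the refined fan chart of Figure~\ref{fig: 2-form cuts}, split the $A_{k-1}$ quiver into the $\mathcal{P}_1$-fan piece, the $\mathcal{P}_2$-fan piece, and the two bridge arrows at the triangle $(1,i,j)$, pull back $d\log$ of each Pl\"ucker coordinate through the rescaling $\Delta_{ij}^{(-1)^{m-j+1}}$ of Theorem~\ref{thm: one cut}, and observe that the resulting $\alpha_1$-terms telescope down to a single $\alpha_1\wedge\alpha_2$ contribution. The paper writes out the telescoping sum term by term (its $\omega_{2,\mathrm{pre}}+\omega_{2,\mathrm{tri}}+\omega_{2,\mathrm{post}}$ decomposition) rather than packaging it as your $T\wedge\alpha_1$, but the content is the same, including the sign bookkeeping you correctly flag as the delicate step.
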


\begin{proof}
    Consider the quiver associated to the triangulation of $X(\sigma^k)$ in Figure \ref{fig: 2-form cuts} prior to the rescaling given by the cut $\Delta_{ij}$, by \eqref{eq: def Omega}, the two-form $\omega$ is described as
    \begin{align*}
    \omega&= d\log\Delta_{1,k+1}\wedge      d\log\Delta_{1,k}+d\log\Delta_{1,k}\wedge d\log\Delta_{1,k-1}\\
            &\quad +\dots+ d\log\Delta_{1,j+1}\wedge d\log\Delta_{1,j}+d\log\Delta_{1,j}\wedge d\log\Delta_{1,i}\\
            &\quad+d\log \Delta_{1,i}\wedge d\log \Delta_{1,i-1}+d\log \Delta_{1,i-1}\wedge d\log \Delta_{1,i-2}\\
            &\quad+\dots+d\log \Delta_{14}\wedge d\log \Delta_{13}+d\log\Delta_{1,i}\wedge d\log\Delta_{ij}\\
            &\quad +d\log\Delta_{ij}\wedge d\log\Delta_{1,j}+d\log\Delta_{i,j-1}\wedge d\log\Delta_{i,j-2}\\
            &\quad+d\log\Delta_{i,j-2}\wedge d\log\Delta_{i,j-3}+\dots+d\log\Delta_{i,i+3}\wedge d\log\Delta_{i,i+2}
    \end{align*}    
    Let $\alpha_1,\alpha_2$ be the $1$-form and $\omega_1,\,\omega_2$ be the $2$-form associated to $X(\sigma^{j-i})$ and $X(\sigma^{k-j+i+1})$, respectively. By Figure \ref{fig: 2-form cuts}, we define the forms associated to $X(\sigma^{j-i})$ and $X(\sigma^{k-j+i+1})$ directly from quivers as follows:
    \begin{align}
        \alpha_1&=d\log \Delta_{ij}\label{eq: alpha 1 form}\\
        \alpha_2&=d\log( \Delta_{1,k+1}\Delta_{ij}^{(-1)^{k-j+2}})= d\log\Delta_{1,k+1}+(-1)^{k-j+2} d\log\Delta_{ij}\label{eq: alpha 2 form}\\[5pt]
        \omega_1&=d\log\Delta_{i,j-1}\wedge d\log\Delta_{i,j-2}+d\log\Delta_{i,j-2}\wedge d\log\Delta_{i,j-3}\notag\\
        &\quad+\dots+d\log\Delta_{i,i+3}\wedge d\log\Delta_{i,i+2}\notag
    \end{align}  
    While $\alpha_1,\,\alpha_2,\omega_1$ can be easily read from the cluster chart seen in Figure \ref{fig: 2-form cuts}, the $2$-form $\omega_2$ requires a bit more finesse. We notice that there is a triangle formed between the vertices $1,i,j$, to simplify the computation of $\omega_2$, which agrees with \eqref{eq: def Omega}, we decompose the form into parts and call them pre-triangle $\omega_{2,pre}$ for vertices between $1$ and $i$, triangle $\omega_{2,tri}$ for the special vertices $1,i,j$ and post-triangle $\omega_{2,post}$ for vertices between $j$ and $k+1$. By Theorem \ref{thm: one cut} in the rescaled braid variety $X(\sigma^{k-j+i+1})$ the Plu\"cker coordinate $\Delta'_{ij}=\Delta_{ij}\Delta_{ij}^{-1}=1$ resulting in $d\log\Delta'_{ij}=d\log 1=0$, whereas $\Delta_{ij}$ shall remain the nonzero polynomial $w$ describing $X(\sigma^{j-i})$.  Using this decomposition, $\omega_2=\omega_{2,pre}+\omega_{2,tri}+\omega_{2,post}$ is defined by
    \begin{align*}
        \omega_{2,pre}&=d\log \Delta_{1i}\wedge d\log \Delta_{1,i-1}+d\log \Delta_{1,i-1}\wedge d\log \Delta_{1,i-2}+\dots+d\log \Delta_{14}\wedge d\log \Delta_{13}
    \end{align*}
    \begin{align*}
        \omega_{2,tri}&=d\log \left(\Delta_{1j}\Delta_{ij}^{-1}\right)\wedge d\log \Delta_{1i}+d\log \Delta_{1i}\wedge d\log \Delta'_{ij}+d\log \Delta'_{ij}\wedge d\log \left(\Delta_{1j}\Delta_{ij}^{-1}\right)\\
        &=(d\log \Delta_{1j}-d\log \Delta_{ij})\wedge d\log \Delta_{1i}\\
        &=d\log\Delta_{1j}\wedge d\log\Delta_{1i}-d\log\Delta_{ij}\wedge d\log\Delta_{1i}
    \end{align*}
    \begin{align*}
        \omega_{2,post}&=d\log \Delta_{1,k+1}\Delta_{ij}^{(-1)^{k-j+2}}\wedge d\log \Delta_{1,k}\Delta_{ij}^{(-1)^{k-j+1}}\\
         &\quad+d\log \Delta_{1,k}\Delta_{ij}^{(-1)^{k-j+1}}\wedge d\log \Delta_{1,k-1}\Delta_{ij}^{(-1)^{k-j}}\\
         &\quad +\dots+d\log \Delta_{1,j+2}\Delta_{ij}^{-1}\wedge d\log \Delta_{1,j+1}\Delta_{ij} +d\log \Delta_{1,j+1}\Delta_{ij}\wedge d\log \Delta_{1,j}\Delta_{ij}^{-1}   \end{align*}
         \begin{align*}
         &=(d\log\Delta_{1,k+1}+(-1)^{k-j+2} d\log\Delta_{ij})\wedge(d\log\Delta_{1,k}+(-1)^{k-j+1}d\log\Delta_{ij})\\
         &\quad +(d\log\Delta_{1,k}+(-1)^{k-j+1} d\log\Delta_{ij})\wedge(d\log\Delta_{1,k-1}+(-1)^{k-j}d\log\Delta_{ij})\\
         &\quad +\dots+(d\log\Delta_{1,j+2}- d\log\Delta_{ij})\wedge(d\log\Delta_{1,j+1}+d\log\Delta_{ij})\\
         &\quad +(d\log\Delta_{1,j+1}+ d\log\Delta_{ij})\wedge(d\log\Delta_{1,j}-d\log\Delta_{ij})
         \end{align*}
         \begin{align*}
         &=d\log\Delta_{1,k+1}\wedge d\log\Delta_{1,k}+(-1)^{k-j+1} d\log\Delta_{1,k+1}\wedge d\log\Delta_{ij}\\
         &\quad +(-1)^{k-j+2} d\log\Delta_{ij}\wedge d\log\Delta_{1,k}+ d\log\Delta_{1,k}\wedge d\log\Delta_{1,k-1}\\
         &\quad+(-1)^{k-j} d\log\Delta_{1,k}\wedge d\log\Delta_{ij}+(-1)^{k-j+1}d\log\Delta_{ij}\wedge d\log\Delta_{1,k-1}\\
          &\quad+\dots+ d\log\Delta_{1,j+2}\wedge d\log\Delta_{1,j+1}+ d\log\Delta_{1,j+2}\wedge  d\log\Delta_{ij}\\
          &\quad - d\log\Delta_{i,j}\wedge d\log \Delta_{1,j+1}+ d\log\Delta_{1,j+1}\wedge d\log\Delta_{1,j}\\
          &\quad - d\log\Delta_{1,j+1}\wedge d\log\Delta_{ij}+ d\log\Delta_{ij}\wedge d\log\Delta_{1,j}\\[5pt]
          &= d\log\Delta_{1,k+1}\wedge d\log\Delta_{1,k}+ d\log\Delta_{1,k}\wedge d\log\Delta_{1,k-1}\\
          &\quad +\dots+ d\log\Delta_{1,j+2}\wedge d\log\Delta_{1,j+1}+ d\log\Delta_{1,j+1}\wedge d\log\Delta_{1,j}\\
          &\quad +(-1)^{k-j+1} d\log\Delta_{1,k+1}\wedge d\log\Delta_{ij}
    \end{align*}
    Note that from \eqref{eq: alpha 1 form} and \eqref{eq: alpha 2 form}, $\alpha_1\wedge\alpha_2=d\log\Delta_{ij}\wedge d\log\Delta_{1,k+1}$. Therefore, the additional term $(-1)^{k-j+1} d\log\Delta_{1,k+1}\wedge d\log\Delta_{ij}$ from $\omega_{2,post}$ may be negated by $(-1)^{k-j}\alpha_1\wedge\alpha_2$, providing the necessary adjustment to acquire $\Phi^*_{ij}\omega$ as stated.           
\end{proof}

\begin{theorem}
The pullback map
$$
\Phi_{ij}^*:H^*(X(\sigma^{k}))\to H^*(X(\sigma^{j-i}))\otimes H^*(X(\sigma^{k-j+i+1}))
$$
is injective and can be described by Lemmas \ref{lem: alpha form} and \ref{lem: omega eqn}
\end{theorem}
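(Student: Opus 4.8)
The plan is to reduce everything to the algebra structure of Theorem~\ref{thm: alg forms} together with the pullback formulas of Lemmas~\ref{lem: alpha form} and~\ref{lem: omega eqn}. First, $\Phi_{ij}^*$ is a ring homomorphism, and by Theorem~\ref{thm: alg forms} the classes $\alpha$ and $\omega$ generate $H^*(X(\sigma^k))$ as a $\C$-algebra; hence $\Phi_{ij}^*$ is completely determined by $\Phi_{ij}^*\alpha$ and $\Phi_{ij}^*\omega$, which are exactly Lemmas~\ref{lem: alpha form} and~\ref{lem: omega eqn}. This already gives the ``can be described by'' assertion. By the K\"unneth theorem I identify the target with $H^*(X(\sigma^{a}))\otimes H^*(X(\sigma^{b}))$, where $a=j-i$ and $b=k-j+i+1$, and on each factor I use the monomial basis of Theorem~\ref{thm: alg forms}; a basis of the tensor product is then given by the nonzero products $\omega_1^{p}\alpha_1^{\varepsilon_1}\otimes\omega_2^{q}\alpha_2^{\varepsilon_2}$, which occupy pairwise distinct bidegrees.

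For injectivity I would use that, by Theorem~\ref{thm: homology}, $H^d(X(\sigma^k))$ is one-dimensional for $0\le d\le k-1$ and zero otherwise. Since $\Phi_{ij}^*$ preserves cohomological degree, its kernel is a graded subspace which in each degree is either $0$ or all of $H^d$; thus injectivity is equivalent to showing that $\Phi_{ij}^*$ sends each basis monomial $\omega^{m}$ (resp. $\alpha\omega^{m}$) of Theorem~\ref{thm: alg forms} to a nonzero class. Writing $x=\omega_1\otimes 1$, $y=1\otimes\omega_2$, $z=\alpha_1\otimes\alpha_2$, the formulas give $\Phi_{ij}^*\omega=x+y+(-1)^{k-j}z$ and $\Phi_{ij}^*\alpha=(1\otimes\alpha_2)+(-1)^{k-j}(\alpha_1\otimes 1)$. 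Because $\alpha_1\wedge\alpha_1=\alpha_2\wedge\alpha_2=0$ we have $z^2=0$ and $x,y,z$ commute pairwise, so the multinomial expansion collapses (after bookkeeping of Koszul signs) to
\begin{align*}
\Phi_{ij}^*(\omega^{m})&=\sum_{p=0}^{m}\binom{m}{p}\,\omega_1^{p}\otimes\omega_2^{m-p}
+(-1)^{k-j}m\sum_{p=0}^{m-1}\binom{m-1}{p}\,\omega_1^{p}\alpha_1\otimes\omega_2^{m-1-p}\alpha_2,\\
\Phi_{ij}^*(\alpha\omega^{m})&=\sum_{p=0}^{m}\binom{m}{p}\Bigl(\omega_1^{p}\otimes\omega_2^{m-p}\alpha_2+(-1)^{k-j}\,\omega_1^{p}\alpha_1\otimes\omega_2^{m-p}\Bigr).
\end{align*}

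In each expression the two displayed sums lie in different K\"unneth components, and within one sum the summands indexed by distinct $p$ lie in distinct bidegrees; so there is no cancellation and it suffices to exhibit a single $p$ for which the corresponding $\omega_1^{p}\alpha_1^{\varepsilon_1}\otimes\omega_2^{q}\alpha_2^{\varepsilon_2}$ is a nonzero basis element (the coefficients are nonzero integers, hence nonzero in $\C$). Theorem~\ref{thm: alg forms} gives the precise ranges: with $\mu_a:=\lceil a/2\rceil-1$ one has $\omega_1^{p}\neq0$ iff $p\le\mu_a$, and $\alpha_1\omega_1^{p}\neq0$ iff $p\le\nu_a$, where $\nu_a=a/2-1$ for $a$ even and $\nu_a=(a-3)/2$ for $a$ odd, similarly for $b$, and for $k=a+b-1$ one has $\omega^{m}\neq0$ iff $m\le\mu_k$, $\alpha\omega^{m}\neq0$ iff $m\le\nu_k$. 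A short case analysis on the parities of $a$ and $b$ then shows $\mu_k\le\max(\mu_a+\mu_b,\ \nu_a+\nu_b+1)$ and $\nu_k\le\max(\mu_a+\nu_b,\ \nu_a+\mu_b)$, so the extremal choice $p=\min(m,\mu_a)$ (or, in the one remaining case $m=\mu_k$ with $a,b$ both even, the choice $p=\nu_a$ in the ``mixed'' sum) produces the required nonzero summand. The main technical point is precisely this parity case analysis together with keeping the Koszul signs straight in the graded tensor product; the ``mixed'' $z$-term of $\Phi_{ij}^*\omega$ is genuinely needed only when $a$ and $b$ are both even, where the pure part of $\Phi_{ij}^*(\omega^{\mu_k})$ vanishes. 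Once non-vanishing of every basis monomial is established, injectivity follows.
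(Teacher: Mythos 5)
Your proof is correct and takes essentially the same route as the paper: both apply the pullback formulas of Lemmas \ref{lem: alpha form} and \ref{lem: omega eqn}, expand $\Phi_{ij}^*(\omega^m)$ and $\Phi_{ij}^*(\alpha\omega^m)$, and exhibit a nonvanishing K\"unneth component by a parity case analysis on $a$ and $b$. Your write-up is somewhat more systematic — you use $z^2=0$ to collapse the multinomial to two sums, make explicit that distinct bidegrees preclude cancellation (the paper leaves this implicit behind the ``$+\dots$''), and check every basis monomial rather than only the top-degree ones — and in doing so you also sidestep a small slip in the paper's even-$k$ case, where the nonvanishing summand of $\Phi_{ij}^*(\alpha\omega^{k/2-1})$ should be $\alpha_1\omega_1^{a/2-1}\otimes\omega_2^{(b-1)/2}$ rather than $\omega_1^{a/2-1}\otimes\alpha_2\omega_2^{(b-1)/2}$, since $\alpha_2\omega_2^{(b-1)/2}=0$ for $b$ odd.
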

\begin{proof}
    Similar to Theorem \ref{thm: alg forms}, we want to prove that the restrictions of all forms in \eqref{eq: basis even} and \eqref{eq: basis odd} do not vanish in $H^*(X(\sigma^{j-i}))\otimes H^*(X(\sigma^{k-j+i+1}))$, here we use the formulas from Lemmas \ref{lem: alpha form} and \ref{lem: omega eqn}. 

    Suppose $k$ is odd, then we want to show that $\Phi_{ij}^*\left[\alpha\omega^{\frac{k-3}{2}}\right]$ and $\Phi_{ij}^*\left[\omega^{\frac{k-1}{2}}\right]$ are both nonzero. Since $k=a+b-1$ is odd, then either $a,b$ are both even or both odd.

    (i) Suppose $a$ and $b$ are both even. Given that $\omega_1^{\frac{a}{2}-1},\,\alpha_1\omega_1^{\frac{a}{2}-1},\,\omega_2^{\frac{b}{2}-1},\,\alpha_2\omega_2^{\frac{b}{2}-1}$ are nonzero by definition, then 
    \begin{align*}        \Phi_{ij}^*\left[\alpha\omega^{\frac{k-3}{2}}\right]&=(\alpha_2+(-1)^{k-j}\alpha_1)(\omega_1+\omega_2+(-1)^{k-j}\alpha_1\wedge\alpha_2)^{\frac{k-3}{2}}\\
        &=(\alpha_2+(-1)^{k-j}\alpha_1)(\omega_1+\omega_2+(-1)^{k-j}\alpha_1\wedge\alpha_2)^{\frac{a+b-4}{2}}\\
        &=(\alpha_2+(-1)^{k-j}\alpha_1)\displaystyle\sum_{l_1+l_2+l_3=\frac{a+b-4}{2}}\binom{\frac{a+b-4}{2}}{l_1,l_2,l_3}\omega_1^{l_1}\omega_2^{l_2}\left((-1)^{k-j}\alpha_1\wedge\alpha_2\right)^{l_3}\\
        &=\binom{\frac{a+b-4}{2}}{\frac{a}{2}-1,\frac{b}{2}-1,0}\alpha_2\omega_1^{\frac{a}{2}-1}\omega_2^{\frac{b}{2}-1}+\dots
    \end{align*}
    with $\alpha_2\omega_2^{\frac{b}{2}-1},\,\omega_1^{\frac{a}{2}-1}\ne0$, then $\Phi_{ij}^*\left[\alpha\omega^{\frac{k-3}{2}}\right]$ is nonvanishing. Furthermore,
    \begin{align*}        \Phi_{ij}^*\left[\omega^{\frac{k-1}{2}}\right]&=(\omega_1+\omega_2+(-1)^{k-j}\alpha_1\wedge\alpha_2)^{\frac{k-1}{2}}\\
        &=(\omega_1+\omega_2+(-1)^{k-j}\alpha_1\wedge\alpha_2)^{\frac{a+b-2}{2}}\\        &=\displaystyle\sum_{l_1+l_2+l_3=\frac{a+b-2}{2}}\binom{\frac{a+b-2}{2}}{l_1,l_2,l_3}\omega_1^{l_1}\omega_2^{l_2}\left((-1)^{k-j}\alpha_1\wedge\alpha_2\right)^{l_3}\\
        &=\binom{\frac{a+b-2}{2}}{\frac{a}{2}-1,\frac{b}{2}-1,1}\left((-1)^{k-j}\alpha_1\wedge\alpha_2\right)\omega_1^{\frac{a}{2}-1}\omega_2^{\frac{b}{2}-1}+\dots
    \end{align*}
    where $\alpha_1\omega_1^{\frac{a}{2}-1},\,\alpha_2\omega_2^{\frac{b}{2}-1}\neq0$. Then $\Phi_{ij}^*\left[\omega^{\frac{k-1}{2}}\right]$ is nonvanishing.\\
    (ii) Suppose $a$ and $b$ are both odd. Given that $\alpha_1\omega_1^{\frac{a-3}{2}},\,\omega_1^{\frac{a-1}{2}},\,\alpha_2\omega_2^{\frac{b-3}{2}},\,\omega_2^{\frac{b-1}{2}}$ are nonzero, then
    \begin{align*}
        \Phi_{ij}^*\left[\alpha\omega^{\frac{k-3}{2}}\right]
        &=(\alpha_2+(-1)^{k-j}\alpha_1)\displaystyle\sum_{l_1+l_2+l_3=\frac{a+b-4}{2}}\binom{\frac{a+b-4}{2}}{l_1,l_2,l_3}\omega_1^{l_1}\omega_2^{l_2}\left((-1)^{k-j}\alpha_1\wedge\alpha_2\right)^{l_3}\\
        &=(\alpha_2+(-1)^{k-j}\alpha_1)\binom{\frac{a+b-4}{2}}{\frac{a-3}{2},\frac{b-}{2},0}\omega_1^{\frac{a-3}{2}}\omega_2^{\frac{b-1}{2}}+\dots\\
        &=(-1)^{k-j}\binom{\frac{a+b-4}{2}}{\frac{a-3}{2},\frac{b-}{2},0}\alpha_1\omega_1^{\frac{a-3}{2}}\omega_2^{\frac{b-1}{2}}+\dots
    \end{align*}
    Given $\alpha_1\omega_1^{\frac{a-3}{2}},\,\omega_2^{\frac{b-1}{2}}\neq0$, then $\Phi_{ij}^*\left[\alpha\omega^{\frac{k-3}{2}}\right]$ is nonvanishing. Furthermore,
    \begin{align*}
        \Phi_{ij}^*\left[\omega^{\frac{k-1}{2}}\right]        &=\displaystyle\sum_{l_1+l_2+l_3=\frac{a+b-2}{2}}\binom{\frac{a+b-2}{2}}{l_1,l_2,l_3}\omega_1^{l_1}\omega_2^{l_2}\left((-1)^{k-j}\alpha_1\wedge\alpha_2\right)^{l_3}\\
        &=\binom{\frac{a+b-2}{2}}{\frac{a-1}{2},\frac{b-1}{2},0}\omega_1^{\frac{a-1}{2}}\omega_2^{\frac{b-1}{2}}+\dots
    \end{align*}
    Since $\omega_1^{\frac{a-1}{2}},\,\omega_2^{\frac{b-1}{2}}\neq0$, then $\Phi_{ij}^*\left[\omega^{\frac{k-1}{2}}\right]$ is nonvanishing.\\

    Now, suppose $k$ is even, then we want to show that $\Phi_{ij}^*\left[\omega^{\frac{k}{2}-1}\right]$ and $\Phi_{ij}^*\left[\alpha\omega^{\frac{k}{2}-1}\right]$ are both nonzero. Since $k=a+b-1$ is even, without loss of generality $a$ is even and $b$ is odd. Since $a$ is even and $b$ is odd, then $\omega_1^{\frac{a}{2}-1},\,\alpha_1\omega_1^{\frac{a}{2}-1},\,\alpha_2\omega_2^{\frac{b-3}{2}},\,\omega_2^{\frac{b-1}{2}}$ are nonzero, then 
    \begin{align*}
        \Phi_{ij}^*\left[\omega^{\frac{k}{2}-1}\right]&=(\omega_1+\omega_2+(-1)^{k-j}\alpha_1\wedge\alpha_2)^{\frac{k}{2}-1}\\
        &=(\omega_1+\omega_2+(-1)^{k-j}\alpha_1\wedge\alpha_2)^{\frac{a+b-3}{2}}\\        &=\displaystyle\sum_{l_1+l_2+l_3=\frac{a+b-3}{2}}\binom{\frac{a+b-3}{2}}{l_1,l_2,l_3}\omega_1^{l_1}\omega_2^{l_2}\left((-1)^{k-j}\alpha_1\wedge\alpha_2\right)^{l_3}\\
        &=\binom{\frac{a+b-3}{2}}{\frac{a}{2}-1,\frac{b-1}{2},0}\omega_1^{\frac{a}{2}-1}\omega_2^{\frac{b-1}{2}}+\dots
    \end{align*}
    Since $\omega_1^{\frac{a}{2}-1},\,\omega_2^{\frac{b-1}{2}}\neq0$, then $\Phi_{ij}^*\left[\omega^{\frac{k}{2}-1}\right]$ is nonvanishing. Next,
    \begin{align*}              \Phi_{ij}^*\left[\alpha\omega^{\frac{k}{2}-1}\right]&=(\alpha_2+(-1)^{k-j}\alpha_1)(\omega_1+\omega_2+(-1)^{k-j}\alpha_1\wedge\alpha_2)^{\frac{k}{2}-1}\\
        &=(\alpha_2+(-1)^{k-j}\alpha_1)(\omega_1+\omega_2+(-1)^{k-j}\alpha_1\wedge\alpha_2)^{\frac{a+b-3}{2}}\\
        &=(\alpha_2+(-1)^{k-j}\alpha_1)\displaystyle\sum_{l_1+l_2+l_3=\frac{a+b-3}{2}}\binom{\frac{a+b-3}{2}}{l_1,l_2,l_3}\omega_1^{l_1}\omega_2^{l_2}\left((-1)^{k-j}\alpha_1\wedge\alpha_2\right)^{l_3}\\
        &=(\alpha_2+(-1)^{k-j}\alpha_1)\binom{\frac{a+b-3}{2}}{\frac{a}{2}-1,\frac{b-1}{2},0}\omega_1^{\frac{a}{2}-1}\omega_2^{\frac{b-1}{2}}+\dots\\
        &=\binom{\frac{a+b-3}{2}}{\frac{a}{2}-1,\frac{b-1}{2},0}\alpha_2\omega_1^{\frac{a}{2}-1}\omega_2^{\frac{b-1}{2}}+\dots
    \end{align*} 
    Since $\omega_1^{\frac{a}{2}-1},\,\alpha_2\omega_2^{\frac{b-1}{2}}\neq0$, then $\Phi_{ij}^*\left[\alpha\omega^{\frac{k}{2}-1}\right]$ is nonvanishing.\\
    
    This implies that all the forms in \eqref{eq: basis even} and  \eqref{eq: basis odd} are nonzero in $H^*(X(\sigma^{j-i}))\otimes H^*(X(\sigma^{k-j+i+1}))$ and hence nonzero in $H^*(X(\sigma^k))$. 
\end{proof}

\bibliographystyle{amsplain}

\begin{thebibliography}{99}
\bibitem{CGGS1} R. Casals, E. Gorsky, M. Gorsky, J. Simental. Algebraic Weaves and Braid Varieties. 	arXiv:2012.06931, 2020.
\bibitem{CGGS2} R. Casals, E. Gorsky, M. Gorsky, J. Simental. Positroid Links and Braid varieties. 	arXiv:2105.13948, 2021.
\bibitem{CGGLSS} R. Casals, E. Gorsky, M. Gorsky, I. Le, L. Shen, J. Simental. Cluster structures on braid varieties. arXiv:2207.11607, 2022.
\bibitem{CW} R. Casals, D.Weng. Microlocal Theory of Legendrian Links and Cluster Algebras. 	arXiv:2204.13244, 2022.
\bibitem{CNS} B. Chantraine, L. Ng, S. Sivek. Representations, sheaves, and Legendrian $(2,m)$ torus links. J. London Math. Soc. 100 (2019), no. 1, 41-82. 	arXiv:1805.03603.

\bibitem{Fraser}  C. Fraser. Quasi-homomorphisms of cluster algebras.
Adv. in Appl. Math. 81 (2016), 40--77.

\bibitem{FSB} 
C. Fraser, M. Sherman--Bennett. Positroid cluster structures from relabeled plabic graphs
Algebr. Comb. 5 (2022), no. 3, 469--513.

\bibitem{GKS} 
    E. Gorsky, O. Kivinen, and J. Simental. Algebra and geometry of link homology. Lecture notes from the IHES 2021 Summer School. Bulletin of the London Mathematical Society 55 (2023), no. 2, 537--591.
    arXiv:2108.10356.
\bibitem{GL}
    P. Galashin, T. Lam. Positroid varieties and cluster algebras.
    Ann. Sci. \'Ec. Norm. Sup\'er. (4) 56 (2023), no. 3, 859--884.
    arXiv
    preprint arXiv:1906.03501, 2019.
\bibitem{Gl2}
    P. Galashin and T. Lam. Positroids, knots, and $q, t$-Catalan numbers. Duke Math. J., to appear. arXiv:2012.09745, 2020.
\bibitem{GLSBS}
    P. Galashin, T. Lam, M. Sherman-Bennett, D. Speyer. Braid variety cluster structures, I: 3D plabic graphs. 	arXiv:2210.04778, 2022.
\bibitem{GSV}
    M. Gekhtman, M. Shapiro, A. Vainshtein. Cluster algebras and Weil-Petersson forms. Duke Math. J. 127 (2005), no. 2, 291--311. arXiv:0309138v2, 2004.    
\bibitem{GH}
    E. Gorsky, M. Hogancamp. Hilbert schemes and  $y$-ification of Khovanov-Rozansky homology. 
    Geom. Topol. 26 (2022), no. 2, 587--678.
\bibitem{GHMN}
    E. Gorsky, M. Hogancamp, A. Mellit, K. Nakagane. Serre Duality for Khovanov-Rozansky homology. Selecta Math. (N.S.)25(2019), no.5, Paper No. 79, 33 pp.
\bibitem{H}
    J. Hughes. Lagrangian Fillings in A-type and their Kalman Loop Orbits. arXiv:2109.09662, 2022.
\bibitem{KLS}
    A. Knutson, T. Lam, and D. Speyer. Positroid varieties: juggling and
    geometry. Compositio Mathematica, 149(10):1710–1752, 2013. 
\bibitem{LS}
    T. Lam and D. Speyer. Cohomology of cluster varieties. I. Locally
    acyclic case. Algebra Number Theory 16 (2022), no. 1, 179–230.
\bibitem{S}
    J. Scott. Grassmannians and cluster algebras. Proceedings of the London
    Mathematical Society, 92(2):345– 380, 2006.
\bibitem{SSBW}
    K. Serhiyenko, M. Sherman-Bennett, and L. Williams. Cluster structures
    in Schubert varieties in the Grassmannian. Proceedings of the London
    Mathematical Society, 119(6):1694–1744, 2019
\bibitem{T}
    M-T.Q. Trinh. From the Hecke Category to the Unipotent Locus. arXiv:2106.07444.
\bibitem{W} L. Williams. Cluster algebras: an introduction
Bull. Amer. Math. Soc. (N.S.) 51 (2014), no. 1, 1--26.    

\end{thebibliography}

\end{document}